\journalname{}
\begin{document}

\title{A convergence framework for inexact nonconvex and nonsmooth  algorithms   and its applications to several  iterations}



\author{Tao Sun  \and Hao Jiang \and Lizhi Cheng\and Wei Zhu
}


\institute{Tao Sun \at
              Department of Mathematics, National University of Defense Technology, Changsha, Hunan,  410073, P.R.China.\\
              \email{nudtsuntao@163.com}           
            \and
           Hao Jiang \at
              College of Computer, National University of Defense  Technology, Changsha, Hunan,  410073, P.R.China.
                \and   Lizhi Cheng \at
              The State Key Laboratory for High Performance Computation, National University of Defense  Technology, Changsha, Hunan,  410073, P.R.China.
              \and Wei Zhu \at Hunan Key Laboratory for Computation and Simulation in Science and Engineering, School of Mathematics and Computational Science, Xiangtan University, Xiangtan, Hunan, 411105, P.R.China.
}

\date{Received: date / Accepted: date}

\maketitle
\begin{abstract}
In this paper, we consider the convergence of an abstract  inexact nonconvex and nonsmooth algorithm, which cannot be included in previous framework. We promise  a \textit{pseudo sufficient descent condition} and a \textit{pseudo relative error condition}, which  are both related to an auxiliary sequence, for the algorithm;  and a continuity condition is assumed to hold. In fact, a lot of classical  inexact nonconvex and nonsmooth algorithms allow these three conditions. Under an assumption on the auxiliary sequence, we prove the sequence generated by the general algorithm converges to a critical point of the objective function if being assumed semi-algebraic property. The core of the proofs lies in building a new Lyapunov function, whose successive difference provides a bound for the successive   difference of the  points generated by the algorithm.  And then, we apply our findings to several classical inexact nonconvex iterative algorithms and derive the corresponding convergence results.
\end{abstract}

\textbf{Keywords:} Nonconvex minimization; Inexact algorithms; Semi-algebraic functions; Kurdyka-{\L}ojasiewicz property; Convergence analysis

\textbf{Mathematical Subject Classification} 90C30, 90C26, 47N10

\tableofcontents
\section{Introduction}
Minimization of the nonconvex and nonsmooth function
\begin{equation}\label{model}
    \min_x F(x)
\end{equation}
 is a core part of nonlinear programming and applied mathematics.  Different from   searching the global minimizers in the convex community, the global convergence is impossible for the general nonconvex   problems. Instead, the nonconvex optimization focuses on the critical point convergence, that is, proving $\textrm{dist}(\textbf{0},\partial F(x^{k}))\rightarrow 0$ and where  $\partial F$ denotes the limiting  subdifferential of $F$ (see definition in Sec. 2).
 A natural problem is then, does the  generated sequence $(x^k)_{k\geq 0}$ converge? In general cases, the answer is still unclear.
Fortunately, in most practical cases, the objective functions
enjoy the semi-algebraic property (see definition in Sec. 2), which can help to make the  sequence convergence be possible.

In paper \cite{attouch2013convergence},  with  semi-algebraic property  assumption on $F$, the authors present a sequence convergence framework: for the sequence $(x^k)_{k\geq 0}$ generated by a very general scheme for problem (\ref{model}),    three conditions,  \textit{sufficient descent condition},  \textit{relative error condition} and \textit{continuity condition},  are assumed to hold; mathematically, these conditions can be presented as: for some $a>0,c>0$
\begin{eqnarray}\label{oldconditions}
\left\{\begin{array}{c}
             F(x^{k})-F(x^{k+1})\geq a\|x^{k+1}-x^k\|^2,\\
    \textrm{dist}(\textbf{0},\partial F(x^{k+1}))\leq c\|x^{k+1}-x^k\|,\\
 \textrm{for}~\textrm{any}~\textrm{stationary~point}~x^*~\textrm{and}~\textrm{subsequence}~(x^{k_j})_{j\geq 0}\rightarrow x^*,~\textrm{~it~holds}~F(x^{k_j})\rightarrow F(x^*).
       \end{array}
\right.
\end{eqnarray}
It then can be proved that $(x^k)_{k\geq 0}$ is convergent to some $x^*$ satisfying that $\textrm{dist}(\textbf{0},\partial F(x^{*}))=0$.
With this framework, for given algorithms, one just needs to check whether  the objective function is semi-algebraic and condition \eqref{oldconditions} holds  or not. Better yet, semi-algebraicity is common and  various classical algorithms  admit  the proposed three conditions.   Therefore, such a framework can be widely used. However, condition \eqref{oldconditions}  is ``non-robust"; specifically, for one algorithm satisfying \eqref{oldconditions}, its slight variation, like inexact version, is usually a violation. In fact, many   inexact methods   break   condition \eqref{oldconditions}; and then, it is necessary to build novel framework for these inexact algorithms and corresponding convergence results.

 \subsection{Motivation: inexact algorithms excluded in previous framework}
 We present a toy example of an inexact algorithm that fails to satisfy the   sufficient descent condition and relative error condition:
 let $F$ be differentiable but nonconvex, the   gradient descent for minimizing $F$ is then
\begin{equation}\label{gradexam}
    x^{k+1}=x^k-h\cdot\nabla F(x^k).
\end{equation}
If the gradient of $F$ is Lipschitz with $L$ and $0<h<\frac{1}{L}$, the sequence $(x^k)_{k\geq 0}$ generated by (\ref{gradexam}) satisfies condition (\ref{oldconditions}). However, if the iteration is corrupted by some noise $e^k$ in each step, i.e.,
\begin{equation}\label{igradexam}
    x^{k+1}=x^k-h\cdot\nabla F(x^k)+e^k.
\end{equation}
 It can be easily checked that the sequence $(x^k)_{k\geq 0}$ generated by (\ref{igradexam}) is likely violating  the first and second conditions in  (\ref{oldconditions}) when $e^k\neq \textbf{0}$. The existing analysis framework then cannot be   used for the algorithm (\ref{igradexam}).
To use previous routine, the authors in \cite{attouch2013convergence} proposed an assumption for the noise: the noise should be bounded by the successive difference of the iteration,  that is,
\begin{equation}\label{jia}
    \|e^k\|\leq \ell\cdot\|x^{k+1}-x^k\|,
\end{equation}
where $\ell>0$.
The conditions (\ref{oldconditions}), with assumption  (\ref{jia}), then can all hold; and the sequence convergence is obviously  trivial.    Such an assumption, however,  indicates that $e^k$, which can be  governed in each iteration, is rather than ``real" noise. A more realistic assumption, which is also frequently used in the convex cases, is about the summability of the  noise.
Motivated by this, in this paper, we get rid of the dependent assumption like (\ref{jia}) and turn to use another summable assumption.
Although the convergence can be easily derived with summable assumption for the nonconvex algorithms,  the existing results   all investigate whether  $\left(\textrm{dist}(\textbf{0},\partial F(x^{k}))\right)_{k\geq 0}$ goes to zero. In this paper, we study the sequence convergence, i.e., whether $(x^k)_{k\geq 0}$ converges or not.

\subsection{Novel convergence framework, assumption and proof}
Although the inexact algorithms always fail to obey the first two of the core condition (\ref{oldconditions}) when losing assumption  (\ref{jia}), we find that many of them satisfy   alternative conditions:
\begin{eqnarray}\label{condition}
\left\{\begin{array}{c}
         F(x^{k})-F(x^{k+1})\geq a\|\omega^{k+1}-\omega^k\|^2-b\eta_k^2, \\
         \textrm{dist}(\textbf{0},\partial F(x^{k+1}))\leq c\sum_{j=k-\tau}^k\|\omega^{j+1}-\omega^j\|+d\eta_k, \\
 \textrm{for}~\textrm{any}~\textrm{stationary~point}~\hat{x}~\textrm{and}~\textrm{subsequence}~(x^{k_j})_{j\geq 0}\rightarrow \hat{x},~\textrm{~it~holds}~F(x^{k_j})\rightarrow F(\hat{x}),
       \end{array}
\right.
\end{eqnarray}
where $a,b,c,d>0$ are constants, and $(\eta_k)_{k\geq 0}$ is a nonnegative sequence, and $\tau\in \mathbb{Z}^{+}_0$ and $(\omega^k)_{k\geq 0}$ is a sequence satisfying
\begin{equation}\label{pointass}
    \sum_{k}\|\omega^k-\omega^{k+1}\|<+\infty\Rightarrow \sum_{k}\|x^k-x^{k+1}\|<+\infty.
\end{equation}
The continuity condition is kept here. Obviously, if $\eta_k\equiv 0$, $\omega^k\equiv x^k$ and $\tau=0$, the condition will reduce to (\ref{oldconditions}).  In the new conditions, two extra    variables $(\eta_k)_{k\geq 0}$ and $(\omega^k)_{k\geq 0}$ are introduced; the first one actually represents the length of the noise multiplied by a constant in the $k$th iteration,   and the other one is a   composition of $(x^k)_{k\geq 0}$.

A novel   assumption on $(\eta_k)_{k\geq 0}$, which is actually a kind of summable requirement,  is recruited for the sequence convergence.  Noticing that $\eta_k$ is closely related to the norm of the noise, thus, such an assumption coincides with the traditional background of inexact algorithms. Our proof is based on the novel framework and assumption, and of course, the  semi-algebraic property.

The core of the proof lies in using an auxiliary function whose successive difference  gives a bound to the successive difference of the sequence $\|\omega^{k+1}-\omega^k\|^2$. If $F$ is semi-algebraic, the auxiliary function is then  Kurdyka-$\L$ojasiewicz (short as K{\L}, see definition in Sec. 2). And then, we build  sufficient descent involving  the auxiliary function  and $\|\omega^{k+1}-\omega^k\|^2$. We denote $t_k$, which is a composition of $(\eta_k)_{k\geq 0}$, in (\ref{denote-t}). In the $(k+1)$-th iteration, the distance between  subdifferential of the new function and the origin is bounded by the composition of $\|\omega^{k+1}-\omega^k\|$, $t_k$ and $t_{k+1}$. And then, we prove the finite length of $(x^k)_{k\geq 0}$ provided $(t_k)_{k\geq 0}$ is also summable. Another technique is the use of the parameter to be determined in the auxiliary function. Based on this trick, we can simplify the noise requirement and  prove that $\eta_k=\mathcal{O}(\frac{1}{k^{\alpha}})$ with $\alpha>1$ is sufficient for the sequence convergence.

\subsection{Related work}
Recently, the convergence analysis in nonconvex optimization has paid increasing attention to using the semi-algebraic property in proofs. In paper \cite{attouch2009convergence}, the authors proved the convergence of proximal algorithm minimizing the semi-algebraic functions. In \cite{attouch2009convergence}, the  rates for the iteration converging to a critical point were exploited. An alternating proximal algorithm was considered in \cite{attouch2010proximal}, and the convergence was proved under semi-algebraic assumption on the objective function. Latter, a   proximal linearized alternating minimization algorithm was proposed and studied in \cite{bolte2014proximal}. A convergence framework was given in \cite{attouch2013convergence}, which contains various nonconvex algorithms. In \cite{frankel2015splitting}, the authors modified the framework for analyzing splitting methods with variable metric, and  proved the general convergence rates. The nonconvex ADMM  was studied under semi-algebraic assumption by \cite{li2015global,li2016douglas}. And latter paper \cite{sun2017pre} proposed the nonconvex primal-dual algorithm and proved the convergence. The semi-algebraic  analysis convergence method was applied to analyzing the convergence of the reweighted algorithm by \cite{sun2017global}. And the extension to the reweighted nuclear norm version was developed in \cite{sun2017convergence}. Recently, the DC algorithm has also employed the semi-algebraic property in the convergence analysis \cite{an2017convergence}.
\subsection{Contribution and organization}
In this paper, we focus on the sequence convergence of inexact nonconvex algorithms, which are dealed with  under a strong assumption on the noise in previous work. A much milder assumption, which is frequently used in inexact algorithm analysis, is introduced. We first propose a novel framework (\ref{condition}), which is more general than the frameworks proposed in \cite{attouch2013convergence} and \cite{frankel2015splitting}, and more importantly, contains plenty of algorithms that excluded in previous frameworks.    The convergence is proved for any sequence satisfying (\ref{condition}) and \eqref{pointass} with $\eta_k=\mathcal{O}(\frac{1}{k^{\alpha}})$ and $\alpha>1$ if $F$ is a semi-algebraic function. A new analysis method is developed, in which we employ an auxiliary Lyapunov function, which is a composition of the $F$ and the length of the noise.  With our results, for a specific algorithm, we just need to verify whether conditions (\ref{condition}) and \eqref{pointass} hold or not. In the application part,   the corresponding sequence convergence guarantees  are provided for several classical inexact nonconvex algorithms.

The rest of the paper is organized as follows. In section 2, we list necessary  preliminaries. Section 3 contains the main results. In section 4, we provide the applications. Section 5 concludes the paper.

\section{Preliminaries}
This section contains two parts: in the first subsection, we introduce the basic definitions and properties of subdifferentials; in the second subsection, the semi-algebraic and K{\L} properties are introduced.
\subsection{Subdifferential}
 Given an lower semicontinuous function $J: \mathbb{R}^N\rightarrow (-\infty,+\infty]$, its domain is defined by
$$\textrm{dom} (J):=\{x\in \mathbb{R}^N: J(x)<+\infty\}.$$
The notion of subdifferential plays a central role in variational analysis.

\begin{definition}[subdifferential] Let  $J: \mathbb{R}^N \rightarrow (-\infty, +\infty]$ be a proper and lower semicontinuous function.
\begin{enumerate}
  \item For a given $x\in \textrm{dom} (J)$, the Fr$\acute{e}$chet subdifferential of $J$ at $x$, written $\hat{\partial}J (x)$, is the set of all vectors $u\in \mathbb{R}^N$   satisfying
  $$\lim_{y\neq x}\inf_{y\rightarrow x}\frac{J(y)-J(x)-\langle u, y-x\rangle}{\|y-x\|}\geq 0.$$
When $x\notin \textrm{dom}(J)$, we set $\hat{\partial}J(x)=\emptyset$.

\item The (limiting) subdifferential, or simply the subdifferential, of $J$ at $x\in \textrm{dom}(J)$, written $\partial J(x)$, is defined through the following closure process
$$\partial J(x):=\{u\in\mathbb{R}^N: \exists x^k\rightarrow x, J(x^k)\rightarrow J(x)~\textrm{and}~u^k\in \hat{\partial}J(x^k)\rightarrow u~\textrm{as}~k\rightarrow \infty\}.$$
\end{enumerate}
\end{definition}
It is easy to verify that the Fr$\acute{e}$chet subdifferential is convex and closed while the subdifferential is closed. When $J$ is convex,  the definition agrees with the subgradient in convex analysis  as
$$\partial J(x):=\{v: J(y)\geq J(x)+\langle v,y-x\rangle~~\textrm{for}~~\textrm{any}~~y\in \mathbb{R}^N\}.$$
The graph of  subdifferential for a real extended valued function $J: \mathbb{R}^N \rightarrow (-\infty, +\infty]$ is defined by
$$\textrm{graph} (\partial J):=\{(x,v)\in\mathbb{R}^N\times\mathbb{R}^N: v\in \partial J(x)\}.$$
And the domain of the subdifferential of $\partial J$ is given as
$$\textrm{dom}(\partial J):=\{x\in \mathbb{R}^N : \partial J(x)\neq \emptyset\}.$$
Let $\{(x^k,v^k)\}_{k\in \mathbb{N}}$ be a sequence in $\mathbb{R}^N\times \mathbb{R}^N$ such that $(x^k,v^k)\in \textrm{graph }(\partial J)$. If $(x^k,v^k)$  converges to $(x, v)$ as $k\rightarrow +\infty$ and $J(x^k)$ converges to $v$ as $k\rightarrow +\infty$, then $(x, v)\in \textrm{graph }(\partial J)$.
A necessary condition for $x\in\mathbb{R}^N$ to be a minimizer of $J(x)$ is
\begin{equation}\label{Fermat}
\textbf{0}\in \partial J(x).
\end{equation}
When $J$ is convex, (\ref{Fermat}) is also sufficient. A point that satisfies (\ref{Fermat}) is called (limiting) critical point. The set of critical points of $J(x)$ is denoted by $\textrm{crit}(J)$.  More details about the definition of subdifferential can be found in the textbooks \cite{rockafellar2009variational,rockafellar2015convex}.
\subsection{Semi-algebraic property and Kurdyka-{\L}ojasiewicz function}
With the definition of subdifferential, we now are prepared to introduce the  Kurdyka-{\L}ojasiewicz (K{\L}) property and function.
\begin{definition}\label{KL}\cite{lojasiewicz1993geometrie,kurdyka1998gradients,bolte2007lojasiewicz}
(a) The function $J: \mathbb{R}^N \rightarrow (-\infty, +\infty]$ is said to have the  Kurdyka-{\L}ojasiewicz (K{\L}) property at $\overline{x}\in \textrm{dom}(\partial J)$ if there
 exist $\eta\in (0, +\infty]$, a neighborhood $U$ of $\overline{x}$ and a continuous concave function $\varphi: [0, \eta)\rightarrow \mathbb{R}^+$ such that
\begin{enumerate}
  \item $\varphi(0)=0$.
  \item $\varphi$ is $C^1$ on $(0, \eta)$.
  \item for all $s\in(0, \eta)$, $\varphi^{'}(s)>0$.
  \item for all $x$ in $U\bigcap\{x|J(\overline{x})<J(x)<J(\overline{x})+\eta\}$, the Kurdyka-{\L}ojasiewicz inequality holds
\begin{equation}
  \varphi^{'}(J(x)-J(\overline{x}))\cdot \textrm{dist}(\textbf{0},\partial J(x))\geq 1.
\end{equation}
\end{enumerate}

(b) Proper lower semicontinuous functions which satisfy the K{\L} inequality at each point of $\textrm{dom}(\partial J)$ are called K{\L}  functions.
\end{definition}

It is hard to directly judge whether a function is  K{\L} or not. Fortunately, the concept of semi-algebraicity can help to find and check a very rich class of K{\L} functions.

\begin{definition}[Semi-algebraic sets and functions \cite{bolte2007lojasiewicz,bolte2007clarke}]

(a) A subset $S$ of $\mathbb{R}^N$ is a real semi-algebraic set if there exists a finite number of real polynomial functions $g_{ij}, h_{ij}:\mathbb{R}^N\rightarrow \mathbb{R}$ such that
$$S=\bigcup_{j=1}^p\bigcap_{i=1}^q\{u\in \mathbb{R}^N:g_{ij}(u)=0~\textrm{and}~~h_{ij}(u)<0\}.$$

(b) A function $h:\mathbb{R}^N\rightarrow (-\infty, +\infty]$ is called semi-algebraic if its graph
$$\{(u, t)\in \mathbb{R}^{N+1}: h(u)=t\}$$
is a semi-algebraic subset of $\mathbb{R}^{N+1}$.
\end{definition}
Better yet, the semi-algebraicity enjoys many quite nice properties \cite{bolte2007lojasiewicz,bolte2007clarke}. Various common functions and sets are also semi-algebraic; we just put a few of them here:
\begin{itemize}
\item Real polynomial functions.
\item Indicator functions of semi-algebraic sets.
\item Finite sums and product of semi-algebraic functions.
\item Composition of semi-algebraic functions.
\item Sup/Inf type function, e.g., $\sup\{g (u, v) : v \in C\}$ is semi-algebraic when $g$ is a semi-algebraic
function and $C$ a semi-algebraic set.
\item Cone of PSD matrices, Stiefel manifolds and constant rank matrices.
\end{itemize}

\begin{lemma}[\cite{bolte2007lojasiewicz}]\label{semikl}
Let $J:\mathbb{R}^N\rightarrow \mathbb{R}$ be a proper and closed function. If $J$ is semi-algebraic then it satisfies the K{\L} property
at any point of $\textrm{dom} (\partial J)$.
\end{lemma}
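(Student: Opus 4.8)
The plan is to reduce the multivariate K{\L} inequality to a one-dimensional statement about a single semi-algebraic ``infimal slope'' function, and then to construct $\varphi$ explicitly. Fix $\overline{x}\in\textrm{dom}(\partial J)$ and, after the harmless shift $J\mapsto J-J(\overline{x})$, assume $J(\overline{x})=0$. Choosing a bounded neighborhood $U$ of $\overline{x}$, define for $s>0$
\begin{equation*}
\psi(s):=\inf\{\textrm{dist}(\mathbf{0},\partial J(x)):x\in U,\ J(x)=s\}.
\end{equation*}
The first thing I would record is that $\psi$ is itself semi-algebraic. Indeed, the graph of the limiting subdifferential $\partial J$ of a semi-algebraic $J$ is a semi-algebraic subset of $\mathbb{R}^N\times\mathbb{R}^N$ (the Fr\'echet subdifferential is cut out by a first-order formula over polynomials, and the limiting subdifferential is obtained from it by a closure, both operations preserving semi-algebraicity); hence the graph of $\psi$ is the projection of a semi-algebraic set and is semi-algebraic by the Tarski--Seidenberg theorem. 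The point of this reduction is that once $\varphi$ satisfies $\varphi'(s)\,\psi(s)\ge 1$ on $(0,\eta)$ with $\varphi'>0$, the full inequality follows: for any $x\in U$ with $0<J(x)<\eta$ one has $\textrm{dist}(\mathbf{0},\partial J(x))\ge\psi(J(x))$, so $\varphi'(J(x))\,\textrm{dist}(\mathbf{0},\partial J(x))\ge\varphi'(J(x))\,\psi(J(x))\ge 1$.

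Next I would analyze $\psi$ near $0$ using the one-variable semi-algebraic toolbox. By the Monotonicity Theorem for semi-algebraic functions of one variable, there is $\eta>0$ such that $\psi$ is continuous, $C^1$ and monotone on $(0,\eta)$; by the Newton--Puiseux description of such functions, either $\psi\equiv 0$ on $(0,\eta)$ or $\psi(s)=c\,s^{\theta}(1+o(1))$ as $s\to 0^+$ for some $c>0$ and some rational $\theta\ge 0$. To exclude the first alternative I would invoke the fact that a semi-algebraic function has only finitely many critical values (a semi-algebraic Sard theorem); after shrinking $\eta$, no $s\in(0,\eta)$ is a critical value, so $\{J=s\}\cap U$ contains no critical point and $\psi(s)>0$. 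Thus $\psi\not\equiv 0$ and $c>0$.

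The heart of the argument, and the step I expect to be the main obstacle, is the quantitative control $\theta<1$: this is exactly the {\L}ojasiewicz gradient inequality $\textrm{dist}(\mathbf{0},\partial J(x))\ge C\,|J(x)|^{\theta}$ with exponent $\theta\in[0,1)$ valid for semi-algebraic functions. It is this bound, rather than mere positivity of $\psi$, that genuinely uses the semi-algebraic hypothesis; it cannot be deduced from monotonicity alone and rests on the deeper finiteness machinery of the theory (cell decomposition, the curve selection lemma and desingularization). I would either cite it from the semi-algebraic/o-minimal literature or, if a self-contained derivation were required, reprove it along the classical curve-selection route.

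Granting $\psi(s)\ge \tfrac{c}{2}\,s^{\theta}$ with $\theta\in[0,1)$ on $(0,\eta)$, the construction of $\varphi$ is then immediate: set $\varphi(s):=M\,s^{1-\theta}$ with $M$ chosen so large that $M(1-\theta)\cdot\tfrac{c}{2}\ge 1$. Then $\varphi(0)=0$, $\varphi$ is nonnegative, concave (because $1-\theta\in(0,1]$) and $C^1$ on $(0,\eta)$ with $\varphi'(s)=M(1-\theta)s^{-\theta}>0$, and $\varphi'(s)\,\psi(s)\ge M(1-\theta)\tfrac{c}{2}\ge 1$. Alternatively one may take $\varphi(s)=\int_0^s\psi(r)^{-1}\,dr$, whose finiteness is guaranteed by the same exponent bound. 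This verifies all four requirements of Definition~\ref{KL} at $\overline{x}$; since $\overline{x}\in\textrm{dom}(\partial J)$ was arbitrary, $J$ is a K{\L} function.
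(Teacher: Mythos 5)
First, a point of reference: the paper does not prove Lemma~\ref{semikl} at all — it is imported as a black box from \cite{bolte2007lojasiewicz} (see also \cite{bolte2007clarke}) — so your proposal must stand entirely on its own, and it does not. Your reduction has the right shape (it is essentially the marginal-function/talweg route known from the literature on {\L}ojasiewicz inequalities), and several steps are sound: the semi-algebraicity of $\psi$ via Tarski--Seidenberg, the reduction of the K{\L} inequality to $\varphi'(s)\,\psi(s)\geq 1$, and the construction of $\varphi$ once a power lower bound $\psi(s)\geq \tfrac{c}{2}s^{\theta}$ with $\theta<1$ is available. The genuine gap is the step you yourself flag as the ``main obstacle'': the exponent bound $\theta<1$ is not a technical ingredient that may be outsourced — it \emph{is} the lemma. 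The nonsmooth {\L}ojasiewicz inequality $\textrm{dist}(\mathbf{0},\partial J(x))\geq C\,|J(x)|^{\theta}$ with $\theta\in[0,1)$ for semi-algebraic lower semicontinuous functions is precisely the power-form (hence stronger) version of the K{\L} property being asserted. If you are permitted to cite it, then $\varphi(s)=\frac{s^{1-\theta}}{C(1-\theta)}$ proves the lemma in one line and your entire $\psi$ apparatus is redundant; if you are not, your argument establishes nothing, since semi-algebraicity, positivity and Puiseux expansion only give $\psi(s)\sim c\,s^{\theta}$ for \emph{some} rational $\theta\geq 0$, and nothing you wrote excludes $\theta\geq 1$. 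Moreover, the ``classical curve-selection route'' you gesture at is not elementary in this setting: to run it for merely lower semicontinuous $J$ one needs a definable selection $s\mapsto x(s)$ on the level sets, the finite length of definable curves, and — crucially — a chain rule bounding $1=\frac{d}{ds}J(x(s))$ by $\|x'(s)\|\cdot\textrm{dist}(\mathbf{0},\partial J(x(s)))$, which rests on the projection formula for Whitney stratifications (the main content of \cite{bolte2007clarke}), not on calculus along a curve.

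There is a second, smaller gap in the positivity argument for $\psi$. The nonsmooth Sard theorem tells you that, after shrinking $\eta$, no $s\in(0,\eta)$ is a critical value, so $\{J=s\}\cap U$ contains no critical point; but $\psi(s)$ is an infimum that need not be attained in $U$. A minimizing sequence $x_j\in U$ with $J(x_j)=s$ and $\textrm{dist}(\mathbf{0},\partial J(x_j))\to 0$ may converge to a point $\bar y$ on the boundary of $U$, and to conclude $\mathbf{0}\in\partial J(\bar y)$ from the closedness of $\textrm{graph}(\partial J)$ you need $J(x_j)\to J(\bar y)$; lower semicontinuity only gives $J(\bar y)\leq s$, and a strict drop $J(\bar y)<s$ breaks the argument. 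So ``no critical point on the level set'' does not by itself yield $\psi(s)>0$. This is repairable (work over the compact set $\overline{U}$, invoke the global finiteness of critical values, and handle the value-drop case by a separate definability argument), but as written it is a jump — and in any case repairing it would still leave the first, essential gap untouched.
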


The proofs in \cite{attouch2013convergence} use a \textit{local area analysis}; the authors first prove that the sequence falls into a neighbor of some point  after enough iterations and then employ the K$\L$ property around the point. In latter paper \cite{bolte2014proximal}, the authors prove a uniformed K$\L$ lemma for a closed set and much simplify the proofs.
Now we present a lemma for the uniformized K{\L} property. With this lemma, we can make the proofs much more concise.

\begin{lemma}[\cite{bolte2014proximal}]\label{con}
Let $J: \mathbb{R}^N\rightarrow \mathbb{R}$ be a proper lower semi-continuous function and $\Omega$ be a compact set. If $J$ is a constant on $\Omega$ and $J$ satisfies the K{\L} property at each point on $\Omega$, then there exists concave function $\varphi$ satisfying the four assumptions in Definition \ref{KL}  and $\delta,\varepsilon>0$ such that for any $\overline{x}\in \Omega$ and any $x$ satisfying that $\emph{dist}(x,\Omega)<\varepsilon$ and $J(\overline{x})<J(x)<J(\overline{x})+\delta$, it holds that
\begin{equation}
    \varphi^{'}(J(x)-J(\overline{x}))\cdot\emph{dist}(\textbf{0},\partial J(x))\geq 1.
\end{equation}
\end{lemma}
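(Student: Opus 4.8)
The plan is to run a standard compactness argument that upgrades the pointwise K\L{} property, available at every point of $\Omega$, to a single desingularizing function valid on a whole neighborhood of $\Omega$. The crucial structural fact I will exploit throughout is that $J$ is constant on $\Omega$; write $J\equiv\zeta$ on $\Omega$. Because of this, for every $\bar x\in\Omega$ the value $J(\bar x)=\zeta$ is the same, so the sub-level slab $\{\zeta<J(x)<\zeta+\delta\}$ appearing in the K\L{} inequality does not depend on which $\bar x$ we pick; this is precisely what makes a uniform threshold possible.

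First I would invoke the pointwise K\L{} property (Definition \ref{KL}) at each $\bar x\in\Omega$: this yields $\eta_{\bar x}\in(0,+\infty]$, an open ball $B(\bar x, r_{\bar x})$, and a concave $C^1$ function $\varphi_{\bar x}$ with $\varphi_{\bar x}(0)=0$ and $\varphi_{\bar x}'>0$, such that the K\L{} inequality holds on $B(\bar x, r_{\bar x})\cap\{\zeta<J<\zeta+\eta_{\bar x}\}$. The family $\{B(\bar x, r_{\bar x}/2)\}_{\bar x\in\Omega}$ is an open cover of the compact set $\Omega$, so I extract a finite subcover indexed by $\bar x_1,\dots,\bar x_p$, with associated data $r_i:=r_{\bar x_i}$, $\eta_i:=\eta_{\bar x_i}$, $\varphi_i:=\varphi_{\bar x_i}$. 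I then set $\varepsilon:=\tfrac12\min_i r_i$ and $\delta:=\min_i\eta_i$, both strictly positive.

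The localization step is routine: if $\textrm{dist}(x,\Omega)<\varepsilon$, pick $\bar x\in\Omega$ with $\|x-\bar x\|<\varepsilon$ (the infimum is attained by compactness); this $\bar x$ lies in some $B(\bar x_i, r_i/2)$, whence $\|x-\bar x_i\|\le \|x-\bar x\|+\|\bar x-\bar x_i\|<\varepsilon+r_i/2\le r_i$, so $x\in B(\bar x_i,r_i)$. If moreover $\zeta<J(x)<\zeta+\delta\le\zeta+\eta_i$, then $x$ sits in the region where the K\L{} inequality for $\varphi_i$ applies (recall $J(\bar x_i)=\zeta$), giving $\varphi_i'(J(x)-\zeta)\cdot\textrm{dist}(\textbf 0,\partial J(x))\ge 1$.

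The only genuinely delicate point is producing one desingularizing function out of the finitely many $\varphi_i$ while preserving all four defining properties and the inequality. My plan is to take $\varphi:=\sum_{i=1}^p\varphi_i$ on $[0,\delta)$. Being a finite sum of concave $C^1$ functions that vanish at $0$, $\varphi$ is concave, $C^1$ on $(0,\delta)$, satisfies $\varphi(0)=0$, and has $\varphi'=\sum_i\varphi_i'>0$; thus it meets the four requirements of Definition \ref{KL}. The inequality then follows from monotonicity of the sum: since every $\varphi_j'>0$, for the index $i$ produced above we have $\varphi'(J(x)-\zeta)=\sum_j\varphi_j'(J(x)-\zeta)\ge\varphi_i'(J(x)-\zeta)$, and multiplying through by $\textrm{dist}(\textbf 0,\partial J(x))\ge0$ preserves the bound $\ge 1$. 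I expect this summation trick --- rather than, say, a pointwise maximum, which need not be $C^1$ --- to be the main obstacle to get cleanly right, since it is what reconciles the competing local functions into one admissible $\varphi$.
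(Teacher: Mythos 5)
Your proof is correct and is essentially the standard argument for this result: the paper itself states the lemma without proof, citing \cite{bolte2014proximal}, and the proof given there (the uniformized K{\L} lemma of that reference) proceeds exactly as you do — a finite subcover of the compact set $\Omega$, uniform thresholds $\varepsilon=\tfrac12\min_i r_i$ and $\delta=\min_i\eta_i$ (made possible by the constancy of $J$ on $\Omega$), and the summation $\varphi:=\sum_{i=1}^p\varphi_i$ of the local desingularizing functions, whose derivative dominates each individual $\varphi_i'$. No gap to report.
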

\section{Convergence analysis}
The sequence $(\eta_k)_{k\geq 0}$ is assumed to satisfy
\begin{equation}\label{noise}
    \sum_{k}\eta_k<+\infty.
\end{equation}
It is worth mentioning that the assumption \eqref{noise} is a necessary condition for the  guarantee of the \textit{sequence convergence} in general case. To see this, we consider the inexact gradient descent example \eqref{igradexam} in a very special case that $F\equiv 0$. And then, we get $x^k=x^0+\sum_{i=0}^{k-1} e^i$. Further, we consider the one-dimensional case, in which, we set    $e^k=\eta_k$. And then, it holds that $x^k=x^0+\sum_{i=0}^{k-1} \eta_i$. It is easy to see, in this example, $(x^k)_{k\geq 0}$ will diverge if \eqref{noise} fails to hold. However, in our proofs, only \eqref{noise} barely promises the sequence convergence.  Actually, the final assumption  is  a little stronger than \eqref{noise}.

Now, we introduce the Lyapunov function used in the analysis. Given any fixed $\theta>1$ to be determined, we denote a new function as
\begin{equation}
    \xi(z):=F(x)+\frac{t^{\theta}}{\theta},\,~ z:=(x,t)\in \mathbb{R}^{N+1}.
\end{equation}
We also need to define the new sequences as
\begin{equation}\label{denote-t}
    t_k:=\big(\theta\cdot b\cdot \sum_{l=k}^{+\infty}\eta_l^2\big)^{\frac{1}{\theta}},\,~ z^{k}:=(x^k,t_k).
\end{equation}
Due to that $ \sum_{l=k'}^{+\infty}\eta_l^{2}\leq \sum_{l=k'}^{+\infty}\eta_l<+\infty$ when   $k'$ is larger enough, $t_k$ is well-defined.
The aim in this part is  proving that $(z^k)_{k\geq 0}$ generated by the  algorithm converges to a critical point of $\xi$, and building the relationships between the critical points of $\xi$ and $F$.  The proof contains two main steps:
\begin{enumerate}
  \item  Find a positive constant $\rho_1$ such that
  $$ \rho_1 \|\omega^{k+1}-\omega^k\|^2\leq \xi(z^k)-\xi(z^{k+1}),  \,~k=0,1, \cdots.$$
  \item Find another positive constants $\rho_2,\rho_3,\rho_4$  such that
        $$\textrm{dist}(\textbf{0},\partial \xi(z^{k+1}))\leq \rho_2\sum_{j=k-\tau}^k\|\omega^{j+1}-\omega^j\|+\rho_3 \eta_k+\rho_4(t_{k+1})^{\theta-1}, ~\, k=0,1, \cdots.$$
\end{enumerate}

\begin{lemma}\label{descend}
Assume that $\{x^{k}\}_{k=0,1,2,\ldots}$ is generated by the  general inexact  algorithm satisfying conditions (\ref{condition}) and (\ref{pointass}), and condition (\ref{noise}) holds.
Then, we have the following results.\\

(1)  It holds that
\begin{equation}\label{descend1}
    \xi(z^k)-\xi(z^{k+1})\geq a\|\omega^{k}-\omega^{k+1}\|^2.
\end{equation}
And then, $(z^k)_{k\geq 0}$ is bounded if $F$ is coercive.

(2) $\sum_{k}\|x^{k+1}-x^{k}\|^2<+\infty$, which implies  that
\begin{equation}
    \lim_{k}\|x^{k+1}-x^k\|=0.
\end{equation}

\end{lemma}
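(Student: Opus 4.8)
The plan is to exploit the specific design of the Lyapunov function $\xi$, whose extra coordinate $t_k$ is engineered precisely to absorb the defect term $-b\eta_k^2$ in the pseudo sufficient descent condition. For part (1), I would start from (\ref{denote-t}), observing that $\frac{t_k^{\theta}}{\theta}=b\sum_{l=k}^{+\infty}\eta_l^2$, so that $\xi(z^k)=F(x^k)+b\sum_{l=k}^{+\infty}\eta_l^2$. Subtracting consecutive values, the tail sums telescope to a single term:
\[
\xi(z^k)-\xi(z^{k+1})=\big(F(x^k)-F(x^{k+1})\big)+b\Big(\sum_{l=k}^{+\infty}\eta_l^2-\sum_{l=k+1}^{+\infty}\eta_l^2\Big)=\big(F(x^k)-F(x^{k+1})\big)+b\eta_k^2.
\]
Applying the first inequality of (\ref{condition}), $F(x^k)-F(x^{k+1})\geq a\|\omega^{k+1}-\omega^k\|^2-b\eta_k^2$, the two $b\eta_k^2$ terms cancel exactly and (\ref{descend1}) follows. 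This cancellation is the entire point of defining $t_k$ as a $\theta$-th root of a tail sum, and it is the only genuinely delicate piece of bookkeeping in the lemma.

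For the boundedness claim in part (1), inequality (\ref{descend1}) shows that $(\xi(z^k))_{k\geq 0}$ is non-increasing, hence $\xi(z^k)\le\xi(z^0)$ for every $k$. Since $b\sum_{l\geq k}\eta_l^2\ge 0$, this yields $F(x^k)\le\xi(z^0)$; if $F$ is coercive, the sublevel set $\{F\le\xi(z^0)\}$ is bounded, so $(x^k)_{k\geq 0}$ is bounded. The auxiliary coordinate is harmless: by (\ref{denote-t}) and (\ref{noise}) the tail sum $\sum_{l\geq k}\eta_l^2$ is finite and non-increasing in $k$, so $0\le t_k\le t_0<+\infty$. Therefore $(z^k)=(x^k,t_k)$ is bounded.

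For part (2), I would telescope (\ref{descend1}) over $k=0,\dots,K$ to obtain $a\sum_{k=0}^{K}\|\omega^{k+1}-\omega^k\|^2\le\xi(z^0)-\xi(z^{K+1})$. The right-hand side is bounded above uniformly in $K$ as soon as $\xi$ is bounded below, which holds because $\xi(z^{K+1})\ge F(x^{K+1})\ge\inf F>-\infty$ (automatic under coercivity, and a standing hypothesis otherwise). Letting $K\to\infty$ gives $\sum_k\|\omega^{k+1}-\omega^k\|^2<+\infty$, so in particular $\|\omega^{k+1}-\omega^k\|\to 0$. The square-summability then transfers to the iterates through the structural link between $\omega$ and $x$ in the concrete algorithms, where $x^k$ is a sub-block of the stacked vector $\omega^k$, so that $\|x^{k+1}-x^k\|\le\|\omega^{k+1}-\omega^k\|$; this yields $\sum_k\|x^{k+1}-x^k\|^2<+\infty$. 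Since the terms of a convergent series vanish, $\lim_k\|x^{k+1}-x^k\|=0$.

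I expect the main obstacle to be not the algebra of part (1) but making the two supporting facts airtight: securing a $K$-independent lower bound on $\xi(z^{K+1})$, i.e.\ ensuring that the noise-augmented energy cannot drift to $-\infty$, which forces a boundedness-from-below hypothesis on $F$; and cleanly passing from square-summability of the $\omega$-increments to that of the $x$-increments. The latter \emph{cannot} be routed through the summability implication (\ref{pointass}), which only concerns first powers and is reserved for the finite-length argument in the main theorem; instead it must rest on the pointwise relation between $\omega$ and $x$ built into each application. Keeping these two dependencies explicit is what distinguishes a correct proof from a formal one.
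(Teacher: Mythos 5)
Your part (1) and the telescoping half of part (2) are exactly the paper's argument: the tail-sum design of $t_k$ gives $\xi(z^k)-\xi(z^{k+1})=F(x^k)-F(x^{k+1})+b\eta_k^2$, the pseudo sufficient descent condition cancels the $b\eta_k^2$ term, and monotonicity of $\xi(z^k)$ together with $\inf\xi>-\infty$ yields boundedness (under coercivity) and $\sum_k\|\omega^{k+1}-\omega^k\|^2<+\infty$.

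The divergence is the final transfer from $\omega$-increments to $x$-increments, and here your suspicion is vindicated: the paper closes part (2) with the single sentence ``With (\ref{pointass}), we then prove the result,'' i.e.\ it invokes precisely the first-power implication you argue cannot be used. Square-summability of $\|\omega^{k+1}-\omega^k\|$ does not give its summability, so the antecedent of (\ref{pointass}) is unavailable and the implication is vacuous at this point; you have identified a genuine gap in the paper's own proof. Your patch, however, is not airtight either. The pointwise bound $\|x^{k+1}-x^k\|\le\|\omega^{k+1}-\omega^k\|$ holds (with equality) in Subsections 4.1--4.4, where $\omega^k\equiv x^k$, but in the ADMM application of Subsection 4.5 the containment is reversed: the abstract iterate is $d^k=(x^k,y^k,\gamma^k,y^{k-1})$ and $\omega^k=(x^k,y^k)$ is the \emph{sub-block}, with the increments of the extra coordinates $\gamma^k$ and $y^{k-1}$ controlled by $\omega$-increments only up to noise terms (Lemma \ref{clem}). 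There the correct transfer reads
$\|d^{k+1}-d^k\|^2\le(1+\rho_1)\|\omega^{k+1}-\omega^k\|^2+\|\omega^{k}-\omega^{k-1}\|^2+\rho_2\|e_2^{k+1}-e_2^{k}\|^2$,
which still gives square-summability because the noise is summable, but it is neither your sub-block inequality nor an instance of (\ref{pointass}). The honest conclusion is that the lemma, stated at the abstract level, needs a hypothesis slightly stronger than (\ref{pointass}) --- for instance a square-summability analogue, or a bound of the form $\|x^{k+1}-x^k\|^2\le C\bigl(\|\omega^{k+1}-\omega^k\|^2+\|\omega^{k}-\omega^{k-1}\|^2+\eta_k^2\bigr)$ --- and that this is what is actually verified, case by case, in the applications.
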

\begin{proof}
(1) From the direct algebra computations, we can easily obtain
\begin{eqnarray}\label{lema1temp1}
\xi(z^k)-\xi(z^{k+1})&=&F(x^k)-F(x^{k+1})+\frac{t_k^{\theta}-t^{\theta}_{k+1}}{\theta}\nonumber\\
&=&F(x^k)-F(x^{k+1})+b\eta_k^2\nonumber\\
&\geq& a\|\omega^{k}-\omega^{k+1}\|^2.
\end{eqnarray}
If $F$ is coercive, then $\xi$ is also coercive. Thus, $(z^k)_{k\geq 0}$ is bounded  due to the boundedness of  $(\xi(z^k))_{k\geq 0}$.

(2) From (\ref{descend1}), $\{\xi(z^{k})\}_{k=0,1,2,\ldots}$ is descending. Note that $\inf\,\xi>-\infty$, $\{\xi(z^{k})\}_{k=0,1,2,\ldots}$ is convergent. Hence, we can easily have
$$\sum_{n=0}^{k}\|\omega^{n+1}-\omega^{n}\|^2\leq \frac{\xi(z^{0})-\xi(z^{k+1})}{a}<+\infty.$$
With (\ref{pointass}), we then prove the result.
\end{proof}

\begin{lemma}\label{lemma3}
If the conditions of Lemma \ref{descend} hold, then
\begin{equation}
\emph{dist}(\textbf{0},\partial \xi(z^{k+1}))\leq c\sum_{j=k-\tau}^k\|\omega^{j+1}-\omega^j\|+d\eta_k+t_{k+1}.
\end{equation}
\end{lemma}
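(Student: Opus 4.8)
The plan is to exploit the separable structure of the Lyapunov function $\xi(x,t)=F(x)+\frac{t^{\theta}}{\theta}$. Since $\xi$ is the sum of a term depending only on $x$ and a term depending only on $t$, and since the scalar map $t\mapsto \frac{t^{\theta}}{\theta}$ is continuously differentiable on $[0,+\infty)$ (here $\theta>1$ guarantees differentiability even at $t=0$, with derivative $t^{\theta-1}$), I would first establish that the limiting subdifferential factorizes as a product over the $(x,t)$ splitting. Concretely, combining the product rule for separable functions with the rule $\partial(F+g)=\partial F+\nabla g$ valid whenever $g$ is $C^{1}$, one obtains
\begin{equation}
\partial \xi(z^{k+1})=\partial F(x^{k+1})\times \{(t_{k+1})^{\theta-1}\}.
\end{equation}
This factorization is the structural heart of the argument; everything that follows is routine estimation.

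With the factorization in hand, I would evaluate the distance to the origin. Because the two blocks are independent, every element of $\partial\xi(z^{k+1})$ has the form $(u,(t_{k+1})^{\theta-1})$ with $u\in\partial F(x^{k+1})$, so that
\begin{equation}
\textrm{dist}(\textbf{0},\partial \xi(z^{k+1}))=\sqrt{\textrm{dist}(\textbf{0},\partial F(x^{k+1}))^2+(t_{k+1})^{2(\theta-1)}}.
\end{equation}
Applying the elementary inequality $\sqrt{p^2+q^2}\leq p+q$, valid for $p,q\geq 0$, then splits the right-hand side into $\textrm{dist}(\textbf{0},\partial F(x^{k+1}))+(t_{k+1})^{\theta-1}$, isolating the contribution of the $x$-block from that of the added power term.

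It remains only to control the first summand, and here I would invoke the relative error condition, namely the second line of (\ref{condition}), which gives $\textrm{dist}(\textbf{0},\partial F(x^{k+1}))\leq c\sum_{j=k-\tau}^k\|\omega^{j+1}-\omega^j\|+d\eta_k$. Combining the two displays then yields the asserted bound, the last term being the derivative $(t_{k+1})^{\theta-1}$ of the auxiliary term. The only points requiring care are the subdifferential calculus step, i.e. justifying both the product rule over the $(x,t)$ splitting and the smooth-additive rule, and the well-definedness of $t_{k+1}$; the latter is guaranteed since $\sum_l\eta_l^2\leq \sum_l\eta_l<+\infty$ by (\ref{noise}), so the tail $\sum_{l=k+1}^{+\infty}\eta_l^2$ is finite. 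I expect the subdifferential factorization to be the genuine, though standard, obstacle, since the remaining estimates are immediate.
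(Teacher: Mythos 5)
Your proof is correct and follows essentially the same route as the paper: compute the separable subdifferential $\partial\xi(z^{k+1})=\partial F(x^{k+1})\times\{(t_{k+1})^{\theta-1}\}$, bound the distance to the origin by the sum of the two block contributions (your exact Euclidean identity followed by $\sqrt{p^2+q^2}\leq p+q$ is just a slightly more explicit version of the paper's one-line estimate), and invoke the pseudo relative error condition from (\ref{condition}). Note that your final term $(t_{k+1})^{\theta-1}$ agrees with what the paper's own proof derives and with how the lemma is later used in the convergence argument, whereas the lemma's displayed statement writes $t_{k+1}$; this appears to be a typo in the paper rather than a gap in your argument.
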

\begin{proof}
Direct calculation yields
\begin{equation}
    \partial \xi(z^{k+1})=\left(
                            \begin{array}{c}
                              \partial F(x^{k+1}) \\
                              (t_{k+1})^{\theta-1} \\
                            \end{array}
                          \right)
    .
\end{equation}
 Thus, we have
 \begin{eqnarray}
    \textrm{dist}(\textbf{0},\partial \xi(z^{k+1}))&\leq&\textrm{dist}(\textbf{0},\partial F(x^{k+1}))+(t_{k+1})^{\theta-1}\nonumber\\
    &\leq&c\sum_{j=k-\tau}^k\|\omega^{j+1}-\omega^j\|+d\eta_k+(t_{k+1})^{\theta-1}.
 \end{eqnarray}
\end{proof}

In the following, we establish some results about the limit points of the sequence generated by the general algorithm.
We need a  definition about the  limit point which is introduced in \cite{attouch2013convergence}.
\begin{definition}
For a sequence $\{d^k\}_{k=0,1,2,\ldots}$,
define that
$$\mathcal{M}(d^0):=\{d\in \mathbb{R}^{N}: \exists~\textrm{an increasing sequence of integers}~\{k_j\}_{j\in\mathbb{N}}~
\textrm{such that} ~d^{k_j}\rightarrow d ~\textrm{as}~ j\rightarrow \infty\},$$
 where $d^0\in \mathbb{R}^{N}$ is the starting point.
\end{definition}

\begin{lemma}\label{points}
Suppose that  $\{z^{k}=(x^k,t_k)\}_{k=0,1,2,\ldots}$ is generated by general algorithm and $F$ is coercive.  And the conditions of Lemma \ref{descend} hold. Then, we have the following results.

(1) For any $z^*=(x^*,t^*)\in \mathcal{M}(z^0)$, we have $t^*=0$ and $\xi(z^*)=F(x^*)$.

(2) $\mathcal{M}(z^0)$ is nonempty and $\mathcal{M}(z^0)\subseteq \emph{crit}(\xi)$.

(2') $\mathcal{M}(x^0)$ is nonempty and $\mathcal{M}(x^0)\subseteq \emph{crit}(F)$

(3) $\lim_{k}\emph{dist}(z^k,\mathcal{M}(z^0))=0$.

(3') $\lim_{k}\emph{dist}(x^k,\mathcal{M}(x^0))=0$.

(4) The function $\xi$ is finite and constant on $\mathcal{M}(z^0)$.

(4') The function $F$ is finite and constant on $\mathcal{M}(x^0)$.

\end{lemma}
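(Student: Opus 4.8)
The plan is to establish the six claims about the limit-point set in a natural order, leveraging the descent and subdifferential estimates already proved in Lemmas \ref{descend} and \ref{lemma3}, together with the summability assumption \eqref{noise}. First I would observe that coercivity of $F$ (hence of $\xi$) together with the monotone decrease $\xi(z^{k+1})\le\xi(z^k)$ from \eqref{descend1} forces $(z^k)_{k\ge0}$ to remain in a bounded set, so $\mathcal{M}(z^0)$ is nonempty; this handles the existence halves of (2) and (2'). For part (1), the key point is the definition of $t_k$ in \eqref{denote-t}: since $\sum_l\eta_l^2<+\infty$ by \eqref{noise}, the tail $\sum_{l=k}^{+\infty}\eta_l^2\to0$, whence $t_k\to0$. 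Thus along any convergent subsequence $z^{k_j}\to z^*=(x^*,t^*)$ we must have $t^*=0$, and consequently $\xi(z^*)=F(x^*)+\tfrac{(t^*)^\theta}{\theta}=F(x^*)$.

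\medskip

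Next I would prove (2), that every limit point of $(z^k)$ is critical for $\xi$. Take a subsequence $z^{k_j}\to z^*$. From Lemma \ref{descend}(2) we have $\|x^{k+1}-x^k\|\to0$, and since $t_k\to0$ as well, the ``shifted'' points $z^{k_j-1}$ (and more generally $z^{k_j-i}$ for $0\le i\le\tau$) converge to the same $z^*$; this lets me feed the telescoping sums $\sum_{j=k-\tau}^k\|\omega^{j+1}-\omega^j\|$ appearing in Lemma \ref{lemma3} into the limit. Because $\sum_k\|\omega^{k+1}-\omega^k\|^2<+\infty$ we have $\|\omega^{k+1}-\omega^k\|\to0$, so each of the $\tau+1$ terms in that finite sum vanishes; combined with $\eta_k\to0$ and $t_{k+1}^{\theta-1}\to0$, the bound in Lemma \ref{lemma3} gives $\mathrm{dist}(\textbf{0},\partial\xi(z^{k_j+1}))\to0$. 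The continuity condition (the third line of \eqref{condition}) guarantees $F(x^{k_j})\to F(x^*)$, hence $\xi(z^{k_j})\to\xi(z^*)$; together with the convergence of the subdifferential elements to zero, the closedness of $\mathrm{graph}(\partial\xi)$ yields $\textbf{0}\in\partial\xi(z^*)$, i.e. $z^*\in\mathrm{crit}(\xi)$. The primed version (2') then follows by projecting onto the $x$-coordinate and using that $\partial\xi(z^*)=\big(\partial F(x^*),\,(t^*)^{\theta-1}\big)$ with $t^*=0$, so $\textbf{0}\in\partial F(x^*)$.

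\medskip

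For (3) and (3'), the statement $\lim_k\mathrm{dist}(z^k,\mathcal{M}(z^0))=0$ is a standard consequence of boundedness of the sequence: if it failed, some subsequence would stay bounded away from $\mathcal{M}(z^0)$, yet by boundedness it would have a further convergent subsequence whose limit lies in $\mathcal{M}(z^0)$ by definition, a contradiction. The $x$-version is immediate by projection. Finally, for (4) and (4') I would show $\xi$ is constant on $\mathcal{M}(z^0)$ by noting that $(\xi(z^k))_{k\ge0}$ is decreasing and bounded below, hence convergent to some limit $\xi^*$; for any $z^*\in\mathcal{M}(z^0)$ the continuity argument above gives $\xi(z^*)=\lim_j\xi(z^{k_j})=\xi^*$, independent of the chosen limit point, and \eqref{denote-t} with $t^*=0$ transfers this to $F$ on $\mathcal{M}(x^0)$.

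\medskip

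The main obstacle I anticipate is part (2): passing to the limit in the subdifferential estimate requires care that the \emph{shifted} iterates $z^{k_j+1}$ and the window $\{z^{k_j-i}\}_{i=0}^\tau$ all converge to the same $z^*$, which relies essentially on $\|x^{k+1}-x^k\|\to0$ and $t_k\to0$ rather than on convergence of $(\omega^k)$ itself; and one must correctly invoke the closedness of the graph of $\partial\xi$ (not merely of $\partial F$) together with the continuity condition to certify $\textbf{0}\in\partial\xi(z^*)$.
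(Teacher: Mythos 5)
Your proposal is correct and follows essentially the same route as the paper: coercivity plus the descent inequality of Lemma \ref{descend} for boundedness and nonemptiness, $t_k\rightarrow 0$ for (1), the bound of Lemma \ref{lemma3} together with closedness of $\partial\xi$ for (2) and projection for (2'), the definition of limit points for (3), and the monotone convergence of $(\xi(z^k))_{k\geq 0}$ with the continuity condition for (4). If anything, you are more careful than the paper in part (2), where you explicitly supply the function-value convergence $\xi(z^{k_j})\rightarrow\xi(z^*)$ (via the continuity condition) that the closedness of the limiting subdifferential formally requires, a step the paper's proof passes over silently.
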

\begin{proof}
(1) Noting $(t_k)_{k\geq 0}\rightarrow 0$, $t^*=0$ and
$$\xi(z^*)=\xi(x^*,0)=F(x^*).$$

(2) It is easy to see the coercivity of $\xi$. With Lemma \ref{descend} and the coercivity of $\xi$, $(z^{k})_{k\geq 0}$  is bounded. Thus, $\mathcal{M}(z^0)$ is nonempty. Assume that $z^*\in \mathcal{M}(z^0)$, from the definition, there exists
a subsequence $z^{k_i}\rightarrow z^*$.  From Lemmas \ref{descend} and \ref{lemma3}, we have
$\textrm{dist}(\textbf{0},\partial\xi(z^{k_i}))\rightarrow \mathbf{0}$. The closedness of $\partial \xi$ indicates that $\mathbf{0}\in \partial \xi(z^*)$, i.e. $z^*\in \textrm{crit}(\xi)$.

(2') With the facts $z=(x,t)$ and $\xi(z)=F(x)+\frac{t^{\theta}}{\theta}$, we can easily derive the results.

(3)(3') This item follows as a consequence of the definition of the limit point.

(4) Let $l$ be the limit of $(\xi(x^k))_{k\geq 0}$. Let $z^*=(x^*,t^*)$  be any one   point in $\mathcal{M}(z^0)$. With (1), we have $t^*=0$. Let $(z^{k_j})_{j\geq 0}$ be the subsequence convergent to $ z^*\in \textrm{crit}(\xi)$. Thus, $(x^{k_j})_{j\geq 0}\rightarrow x^*\in \textrm{crit}(F)$. From the continuity condition,  $F(x^{k_j})\rightarrow F(x^*)$.   Thus, we have $(\xi(z^{k_j}))_{j\geq 0}\rightarrow \xi(z^*)$.  And then it holds
$$\xi(z^*)= \lim_j\xi(z^{k_j})=\lim_{k}\xi(x^k)=l.$$

(4') The proof is similar to (4).

\end{proof}

\begin{lemma}\label{conver}
Suppose that $F$ is a closed semi-algebraic function and coercive. Let the sequence $(x^k)_{k\geq 0}$
be generated by general scheme and the conditions (\ref{condition}) and (\ref{pointass})  hold.  If there exists $\theta>1$ such that the sequence $(\eta_k)_{k\geq 0}$ satisfies
\begin{equation}\label{totalcondition}
  \sum_{k}\eta_k<+\infty,\,~~ \textrm{and}~~\sum_{k}\left(\sum_{l=k}^{+\infty} \eta_l^2\right)^{\frac{\theta-1}{\theta}}<+\infty.
\end{equation}
Then, the sequence $(x^k)_{k\geq 0}$ has finite length, i.e.
\begin{equation}
\sum_{k=0}^{+\infty}\|x^{k+1}-x^k\|<+\infty.
\end{equation}
And $(x^k)_{k\geq 0}$ converges to a critical point $x^*$ of $F$.
\end{lemma}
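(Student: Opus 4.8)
The plan is to run the standard Kurdyka--{\L}ojasiewicz finite-length argument, but on the \emph{lifted} data $(\xi,z^k)$ rather than on $(F,x^k)$, exploiting the two structural estimates already in hand: the sufficient descent of Lemma~\ref{descend}, namely $\xi(z^k)-\xi(z^{k+1})\ge a\|\omega^{k+1}-\omega^k\|^2$, and the relative-error bound of Lemma~\ref{lemma3}. The first thing to observe is that the two ``extra'' terms in that error bound are summable under \eqref{totalcondition}: $\sum_k\eta_k<\infty$ is assumed, and since $(t_{k+1})^{\theta-1}=(\theta b)^{(\theta-1)/\theta}\big(\sum_{l=k+1}^{+\infty}\eta_l^2\big)^{(\theta-1)/\theta}$, the second series in \eqref{totalcondition} gives $\sum_k(t_{k+1})^{\theta-1}<\infty$. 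I would then dispose of the degenerate case in which $\xi(z^k)$ reaches its limit value $l:=\lim_k\xi(z^k)$ at some finite index: by the descent inequality this forces $\omega^{k+1}=\omega^k$ from then on, so $\sum_k\|\omega^{k+1}-\omega^k\|<\infty$ trivially and finite length of $(x^k)$ follows from \eqref{pointass}. So one may assume $\xi(z^k)>l$ for every $k$.

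Next I would fix the geometry. Coercivity of $F$ makes $\xi$ coercive, so by Lemma~\ref{descend}(1) the sequence $(z^k)$ is bounded; Lemma~\ref{points} then supplies a nonempty compact limit set $\Omega:=\mathcal{M}(z^0)\subseteq\mathrm{crit}(\xi)$ on which $\xi$ is constant with value $l$, together with $\lim_k\mathrm{dist}(z^k,\Omega)=0$. Since $F$ is semi-algebraic, $\xi=F(x)+t^\theta/\theta$ is a K{\L} function (taking $\theta$ rational if necessary, via Lemma~\ref{semikl}), so the uniformized Lemma~\ref{con} provides a concave $\varphi$ and constants $\varepsilon,\delta>0$ for which the K{\L} inequality $\varphi'(\xi(z)-l)\,\mathrm{dist}(\mathbf{0},\partial\xi(z))\ge1$ holds whenever $\mathrm{dist}(z,\Omega)<\varepsilon$ and $l<\xi(z)<l+\delta$. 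As $\xi(z^k)\downarrow l$ and $\mathrm{dist}(z^k,\Omega)\to0$, there is an index $K$ beyond which both conditions are satisfied.

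The heart of the matter is a telescoping chain. For $k\ge K$ set $\Delta_k:=\varphi(\xi(z^k)-l)-\varphi(\xi(z^{k+1})-l)\ge0$. Concavity of $\varphi$ gives $\Delta_k\ge\varphi'(\xi(z^k)-l)\,(\xi(z^k)-\xi(z^{k+1}))$; combining the K{\L} inequality, the descent bound, and the relative-error bound $D_k:=\mathrm{dist}(\mathbf{0},\partial\xi(z^k))\le c\sum_{j=k-1-\tau}^{k-1}\|\omega^{j+1}-\omega^j\|+d\eta_{k-1}+(t_k)^{\theta-1}$ (Lemma~\ref{lemma3} with indices shifted back by one) yields $a\|\omega^{k+1}-\omega^k\|^2\le\Delta_k\,D_k$. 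Taking square roots and applying Young's inequality $\sqrt{uv}\le\tfrac{\lambda}{2}u+\tfrac{1}{2\lambda}v$ with a free parameter $\lambda>0$ bounds $\|\omega^{k+1}-\omega^k\|$ by $\tfrac{\lambda}{2}\Delta_k+\tfrac{1}{2\lambda a}D_k$. Summing from $K$ to $N$, the $\Delta_k$ telescope to at most $\varphi(\xi(z^K)-l)$, while the $D_k$-sum splits into the doubly-indexed increment sum (in which each $\|\omega^{j+1}-\omega^j\|$ recurs at most $\tau+1$ times) plus the summable tails $\sum\eta_k$ and $\sum(t_k)^{\theta-1}$.

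The one delicate point, and the main obstacle, is that the increment sum reappears on the right through the $\tau$-window, making the inequality self-referential in $S_N:=\sum_{k=K}^N\|\omega^{k+1}-\omega^k\|$. I would resolve this by choosing $\lambda$ large enough that the coefficient $\tfrac{c(\tau+1)}{2\lambda a}$ multiplying $S_{N-1}$ is strictly below $1$, then bounding $S_{N-1}\le S_N$, absorbing that term on the left, and collecting the finitely many increments of index below $K$ into a constant. What remains is $\big(1-\tfrac{c(\tau+1)}{2\lambda a}\big)S_N$ bounded uniformly in $N$ by quantities independent of $N$, so $\sum_k\|\omega^{k+1}-\omega^k\|<\infty$. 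Finite length of $(x^k)$ is then immediate from \eqref{pointass}; a finite-length sequence is Cauchy, hence $x^k\to x^*$, and Lemma~\ref{points}(2') identifies $x^*\in\mathrm{crit}(F)$.
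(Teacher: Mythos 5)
Your proposal is correct and follows essentially the same route as the paper: lift to $(\xi,z^k)$, dispose of the degenerate finite-termination case, invoke the uniformized K{\L} lemma on the compact limit set $\mathcal{M}(z^0)$, run the concavity/K{\L}/descent/relative-error chain with Young's inequality, and absorb the self-referential $\tau$-window sum (your free parameter $\lambda$ plays exactly the role of the paper's fixed choice $t=c(\tau+1)/a$ followed by multiplying through by $\tau+1$). Your extra remark that $\theta$ should be taken rational so that $t^\theta/\theta$ is genuinely semi-algebraic is a legitimate refinement of a point the paper glosses over, and it is harmless since \eqref{totalcondition} is preserved under increasing $\theta$.
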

\begin{proof}
Obviously, $\xi$ is semi-algebraic, and then K{\L}.
Let $x^*$ be a cluster point of $(x^k)_{k\geq 0}$, then, $z^*=(x^*,0)$ is also a cluster point of $(z^k)_{k\geq 0}$. If $\xi(z^{K'})=\xi(z^*)$ for some $K'$, with the fact $(\xi(z^k))_{k\geq 0}$ is decreasing, $\xi(z^k)=\xi(z^*)$ as $k\geq K'$. Using Lemma \ref{descend}, $z^k=z^{K'}$ as $k>K'$. In the following, we consider the case $\xi(z^k)>\xi(z^*)$.  From Lemmas \ref{con} and \ref{points}, there exist $\delta,\varepsilon>0$ such that for any $\overline{x}\in \mathcal{M}(z^0)$ and any $x$ satisfying that $\textrm{dist}(z,\mathcal{M}(z^0))<\varepsilon$ and $\xi(z^*)<\xi(z)<\xi(z^*)+\delta$. From Lemma \ref{points}, as $k$ is large enough,
$$z^k\in \{z\mid \textrm{dist}(z,\mathcal{M}(z^0))<\varepsilon\}\bigcap\{z\mid \xi(z^*)<\xi(z)<\xi(z^*)+\delta\}.$$
Thus, there exist concave function $\varphi$ such that
\begin{equation}
\varphi'(\xi(z^{k+1})-\xi(z^*))\cdot \textrm{dist}(\textbf{0},\partial\xi(z^{k+1}))\geq 1.
\end{equation}
Therefore, we have
\begin{align}
&\varphi(\xi(z^{k+1})-\xi(z^*))-\varphi(\xi(z^{k+2})-\xi(z^*))\nonumber\\
&\quad\quad\overset{a)}{\geq} \varphi'(\xi(z^{k+1})-\xi(z^*))\cdot(\xi(z^{k+1})-\xi(z^{k+2}))\nonumber\\
&\quad\quad\overset{b)}{\geq} a\cdot\varphi'(f(x^{k+1})-\xi(z^*))\cdot\|\omega^{k+2}-\omega^{k+1}\|^2\nonumber\\
&\quad\quad\overset{c)}{\geq} \frac{a\|\omega^{k+2}-\omega^{k+1}\|^2}{\textrm{dist}(\textbf{0},\partial \xi(z^{k+1}))}\overset{d)}{\geq} \frac{a\|\omega^{k+2}-\omega^{k+1}\|^2}{c\|x^{k+1}-x^k\|+d\eta_k+(t_{k+1})^{\theta-1}},\nonumber
\end{align}
where $a)$ is due to the concavity of $\varphi$, and $b)$ depends on Lemma \ref{descend}, $c)$ uses the K{\L} property, and $d)$ follows from Lemma \ref{lemma3}.
That is also
\begin{align}\label{th1+t1}
&2\|\omega^{k+2}-\omega^{k+1}\|\nonumber\\
&\quad\leq\frac{2}{a}\left\{[\varphi(\xi(z^{k+1})-\xi(z^*))-\varphi(\xi(z^{k+2})-\xi(z^*))]\cdot[c\sum_{j=k-\tau}^k\|\omega^{j+1}-\omega^j\|
+d\eta_k+(t_{k+1})^{\theta-1}]\right\}^{\frac{1}{2}}\nonumber\\
&\quad\overset{e)}{\leq}\frac{c(\tau+1)}{a^2}[\varphi(\xi(z^{k+1})-\xi(z^*))-\varphi(\xi(z^{k+2})-\xi(z^*))]\nonumber\\
&\quad+\frac{\sum_{j=k-\tau}^k\|\omega^{j+1}-\omega^j\|}{\tau+1}+\frac{ad}{c(\tau+1)}\eta_k+\frac{a}{c(\tau+1)}(t_{k+1})^{\theta-1},
\end{align}
where $e)$ uses the  Schwarz  inequality $2(xy)^{\frac{1}{2}}\leq tx+\frac{t}{y}$ with $x=[\varphi(\xi(z^{k+1})-\xi(z^*))-\varphi(\xi(z^{k+2})-\xi(z^*))]$, and $y=[c\sum_{j=k-\tau}^k\|\omega^{j+1}-\omega^j\|+d\eta_k+(t_{k+1})^{\theta-1}]$, and $t=\frac{c(\tau+1)}{a}$.
Multiplying \eqref{th1+t1} with $\tau+1$, we have
\begin{eqnarray}
2(\tau+1)\|\omega^{k+2}-\omega^{k+1}\|&\leq&\frac{c(\tau+1)^2}{a^2}[\varphi(\xi(z^{k+1})-\xi(z^*))-\varphi(\xi(z^{k+2})-\xi(z^*))]\nonumber\\
&+&\sum_{j=k-\tau}^k\|\omega^{j+1}-\omega^j\|+\frac{ad}{c}\eta_k+\frac{a}{c}(t_{k+1})^{\theta-1}.
\end{eqnarray}
Summing both sides from $k$ to $K$, and with simplifications,
\begin{align}
&(2\tau+1)\sum_{l=k+1}^{K+1}\|\omega^{l+1}-\omega^l\|+(2\tau+2)\sum_{j=K+1-\tau}^{K+1}\|\omega^{j+1}-\omega^{j}\|\nonumber\\
&\quad\quad\leq\frac{c(\tau+1)^2}{a^2}[\varphi(\xi(z^{k+1})-\xi(z^*))-\varphi(\xi(z^{K+2})-\xi(z^*))]\nonumber\\
&\quad\quad+\sum_{j=k-\tau}^k\|\omega^{j+1}-\omega^j\|+\frac{ad}{c}\sum_{l=k}^{K}\eta_l+\frac{a}{c}\sum_{l=k+1}^{K+1}(t_{l})^{\theta-1}<+\infty.
\end{align}
Letting $K\rightarrow+\infty$ and using $\sum_{j=K+1-\tau}^{K+1}\|\omega^{j+1}-\omega^{j}\|\rightarrow0$ and $\tau\in \mathbb{Z}^0+$, we then derive
\begin{align}
\sum_{k}\|\omega^{k+1}-\omega^{k}\|<+\infty.
\end{align}
By using (\ref{pointass}), we are then led to
\begin{align}
\sum_{k}\|x^{k+1}-x^{k}\|<+\infty.
\end{align}
Thus, $(x^k)_{k\geq 0}$ has only one stationary point $x^*$. From Lemma \ref{points}, $x^*\in \textrm{crit}(F)$.
\end{proof}

The requirement \eqref{totalcondition} is complicated and impractical in the applications. Thus, we consider the sequence $(\eta_k)_{k\geq 0}$ enjoys the polynomial forms as $\eta_k\leq\frac{C}{k^{\alpha}}$ with $\alpha>1$ and some $C>0$. We try to simplify \eqref{totalcondition} in this case.
The task  then reduces to the following mathematical analysis problem: find the minimum $\alpha_0\geq 1$ such that  for any $\alpha\in (\alpha_0,+\infty)$, there exists $\theta>1$ that makes \eqref{totalcondition} hold. Direct calculations give us
\begin{align}
\left(\sum_{l=k}^{+\infty} \eta_l^2\right)^{\frac{\theta-1}{\theta}}\leq\left(\sum_{l=k}^{+\infty} \frac{C^2}{l^{2\alpha}}\right)^{\frac{\theta-1}{\theta}}\leq\left(\sum_{l=k-1}^{+\infty} \int_{l}^{l+1}\frac{C^2}{t^{2\alpha}}dt\right)^{\frac{\theta-1}{\theta}}=\frac{C^{\frac{2(\theta-1)}{\theta}}}{(2\alpha-1)^{\frac{\theta-1}{\theta}}}
\cdot\frac{1}{(k-1)^{\frac{(2\alpha-1)(\theta-1)}{\theta}}}.
\end{align}
Thus, we need
\begin{align}
\alpha>1,~~\textrm{and}~~\frac{(2\alpha-1)(\theta-1)}{\theta}>1.
\end{align}
After simplifications, we get
\begin{align}
\alpha>1,~~\textrm{and}~~\alpha>\frac{2\theta-1}{2(\theta-1)}.
\end{align}
Then, the problem reduces to
\begin{align}
\alpha_0=\inf_{\theta>1}\left\{c(\theta):=\max\{1,\,\frac{2\theta-1}{2(\theta-1)}\}=\frac{2\theta-1}{2(\theta-1)}\right\}.
\end{align}
Figure \ref{fig.1} shows the function values between $[1.1,5]$.   It is easy to verify that $c(\theta)$ is decreasing to $1$ at $+\infty$. Therefore, we get $\alpha_0=1$. That is also to say if $\eta_k\leq\frac{C}{k^{\alpha}}$ with any fixed $\alpha>1$, there exists $\theta>1$ such that \eqref{totalcondition} can hold. And then, the sequence $(x^k)_{k\geq 0}$ is convergent to some critical point of $F$. Therefore, we obtain the following result.

\begin{theorem}[Convergence result]\label{Th-conver}
Suppose that function $F$ is  closed,  semi-algebraic   and coercive. Let  conditions (\ref{condition}) and (\ref{pointass})  hold, and the sequence $(\eta_k)_{k\geq 0}$ obey
\begin{align}\label{assass}
\eta_k=\mathcal{O}(\frac{1}{k^{\alpha}}),\,\alpha>1.
\end{align}
Then, the sequence $\{x^k\}_{k=0,1,2,3,\ldots}$ has finite length, i.e.
\begin{equation}
\sum_{k=0}^{+\infty}\|x^{k+1}-x^k\|<+\infty.
\end{equation}
And $\{x^k\}_{k=0,1,2,3,\ldots}$ converges to a critical point $x^*$ of $F$.
\end{theorem}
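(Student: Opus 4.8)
The plan is to obtain Theorem~\ref{Th-conver} as an immediate corollary of Lemma~\ref{conver}: the heavy machinery --- the Lyapunov descent \eqref{descend1}, the subgradient bound of Lemma~\ref{lemma3}, the uniformized K{\L} inequality, and the telescoping finite-length argument --- has already been carried out there under the abstract summability hypothesis \eqref{totalcondition}. All that remains is to verify that the concrete polynomial decay \eqref{assass} guarantees the existence of a parameter $\theta>1$ for which \eqref{totalcondition} holds; once this is done, Lemma~\ref{conver} delivers both the finite length of $(x^k)_{k\geq 0}$ and its convergence to a critical point of $F$.

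First I would dispatch the first half of \eqref{totalcondition}. Writing $\eta_k\leq C/k^\alpha$ with $\alpha>1$, the series $\sum_k \eta_k$ is dominated by a convergent $p$-series, so $\sum_k\eta_k<+\infty$ holds for \emph{every} admissible $\theta$; in particular this also makes the tails $\sum_{l=k}^{+\infty}\eta_l^2$ finite, so that $t_k$ in \eqref{denote-t} is well defined.

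The substantive step is the second half of \eqref{totalcondition}. Here I would bound the tail by an integral comparison, $\sum_{l=k}^{+\infty} C^2/l^{2\alpha}\leq \int_{k-1}^{+\infty} C^2/t^{2\alpha}\,dt$, which evaluates to a constant multiple of $1/(k-1)^{2\alpha-1}$. Raising to the power $(\theta-1)/\theta$ turns the general term into a constant times $(k-1)^{-(2\alpha-1)(\theta-1)/\theta}$, so by the $p$-series test the series converges exactly when the exponent exceeds one, i.e. when $(2\alpha-1)(\theta-1)/\theta>1$, equivalently $\alpha>\frac{2\theta-1}{2(\theta-1)}=:c(\theta)$. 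The finitely many initial indices for which $k-1$ is too small are irrelevant to summability and may be discarded.

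Finally I would argue that such a $\theta$ always exists when $\alpha>1$. A one-line derivative computation shows $c(\theta)$ is strictly decreasing on $(1,+\infty)$ with $\lim_{\theta\to+\infty}c(\theta)=1$, so for any fixed $\alpha>1$ one can pick a single $\theta$ large enough that $c(\theta)<\alpha$; this $\theta$ simultaneously satisfies both parts of \eqref{totalcondition}. Invoking Lemma~\ref{conver} with this choice completes the argument. I do not anticipate a genuine obstacle: the only point requiring care is that $\theta$ must be selected \emph{after} $\alpha$ (as $\alpha\downarrow 1$ one is forced to let $\theta\to+\infty$), but since \emph{any} fixed $\theta>1$ is admissible in the construction of $\xi$ and $t_k$, this dependence causes no difficulty.
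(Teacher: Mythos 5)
Your proposal is correct and follows essentially the same route as the paper: both reduce Theorem~\ref{Th-conver} to Lemma~\ref{conver} by verifying \eqref{totalcondition} under the polynomial decay \eqref{assass}, using the same integral comparison for the tail $\sum_{l=k}^{+\infty}\eta_l^2$ and the same reduction to the condition $\alpha>c(\theta)=\frac{2\theta-1}{2(\theta-1)}$ with $c(\theta)\downarrow 1$ as $\theta\to+\infty$. The only cosmetic difference is that you justify the monotonicity of $c(\theta)$ by a derivative computation (and note the harmless small-$k$ indices), whereas the paper appeals to a plotted figure.
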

\begin{figure}
  \centering
  \includegraphics[width=3.0in]{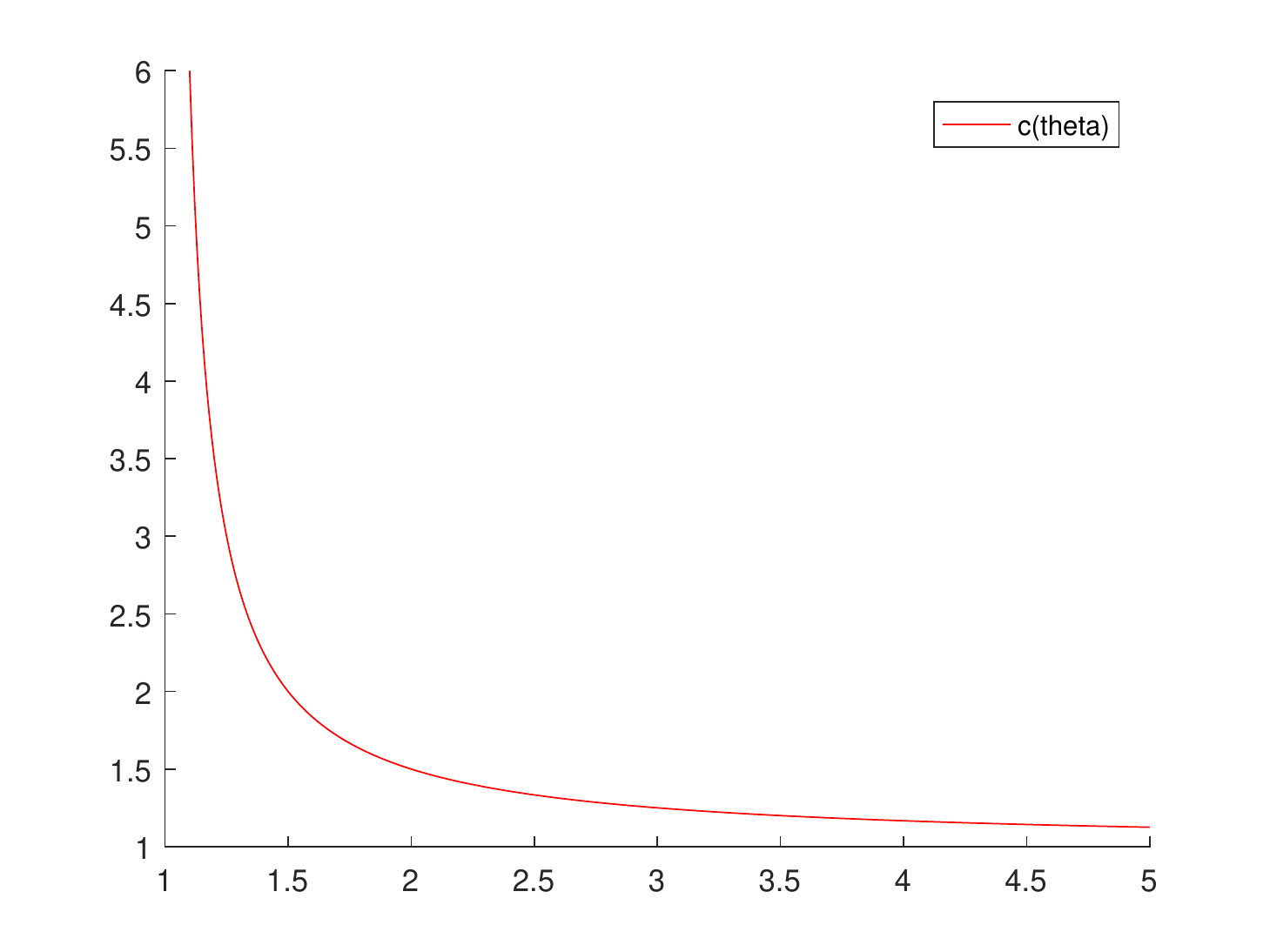}\\
  \caption{Function $c(\theta)$  on the interval $[1.1, 5]$ }\label{fig.1}
\end{figure}

\section{Applications to several nonconvex algorithms}
In this part,  several classical nonconvex inexact algorithms are considered. We apply our theoretical findings to these algorithms and derive corresponding convergence results. As presented before, we just need to check whether the algorithm satisfies   conditions  (\ref{condition}) and   \eqref{pointass}.  For a closed function (may be nonconvex) $J$, we denote
\begin{equation}\label{pro}
    \textbf{prox}_{J}(x)\in \textrm{arg}\min_{y}\{ J(y)+\frac{\|y-x\|^2}{2}\}.
\end{equation}
Different from convex cases, the $\textbf{prox}_{J}$ is a point-to-set operator and may enjoy more than one solution.
The definition of the proximal map directly gives the following result.
\begin{lemma}\label{tool}
For any $x$ and $y$, if $z\in \emph{\textbf{prox}}_{J}(x)$,
\begin{equation}
    J(z)+\frac{\|z-x\|^2}{2}\leq J(y)+\frac{\|y-x\|^2}{2}.
\end{equation}
\end{lemma}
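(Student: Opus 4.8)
The plan is to read the inequality off directly from the definition \eqref{pro} of the proximal map, with no auxiliary machinery required. By construction, $\textbf{prox}_{J}(x)$ is the (possibly set-valued) collection of global minimizers over $y$ of the regularized objective $g_{x}(y):=J(y)+\frac{\|y-x\|^2}{2}$. Hence the hypothesis $z\in\textbf{prox}_{J}(x)$ is nothing but the assertion that $z$ attains the infimum of $g_{x}$, and the desired conclusion is exactly the global-minimality inequality for $g_{x}$ written out term by term.

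First I would fix $x$ and set $g_{x}(y):=J(y)+\frac{\|y-x\|^2}{2}$, so that \eqref{pro} reads $z\in \textrm{arg}\min_{y} g_{x}(y)$. Unwinding the meaning of $\textrm{arg}\min$ for a point-to-set operator, this says precisely that $g_{x}(z)\leq g_{x}(y)$ for every $y\in\mathbb{R}^{N}$. Substituting the definition of $g_{x}$ on both sides then yields $J(z)+\frac{\|z-x\|^2}{2}\leq J(y)+\frac{\|y-x\|^2}{2}$ for all $y$, which is exactly the claim.

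No continuity, convexity, or differentiability of $J$ is invoked: the argument rests solely on $z$ being a global minimizer of $g_{x}$, and this minimality is supplied by the hypothesis rather than derived. The multivaluedness of $\textbf{prox}_{J}$ noted after \eqref{pro} is harmless, since the statement concerns one arbitrary selection $z$ from the set and the minimizing inequality holds for each such selection. Consequently there is essentially no obstacle here; the only point requiring any care is the correct reading of the set-valued $\textrm{arg}\min$ — namely that membership encodes the inequality against every competitor $y$ — after which the result is immediate and needs no further estimate.
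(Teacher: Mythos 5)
Your proposal is correct and coincides with the paper's own treatment: the paper offers no separate argument, stating only that ``the definition of the proximal map directly gives the following result,'' which is precisely your unwinding of $z\in\textrm{arg}\min_{y}\{J(y)+\frac{\|y-x\|^2}{2}\}$ into the global-minimality inequality. Your additional remarks (no convexity needed, multivaluedness is harmless since $z$ is an arbitrary selection) are accurate clarifications of the same one-line argument.
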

Of course, we also have
\begin{equation}
    x-z\in\partial J(z).
\end{equation}
\\
In subsections 4.1-4.4, the point $\omega^k$ is $x^k$ itself, i.e., $\omega^k\equiv x^k$.
\subsection{Inexact nonconvex    proximal gradient algorithm}
The nonconvex proximal gradient algorithm is developed for the nonconvex composite optimization
\begin{equation}\label{composite}
\min_{x}\{F(x)=f(x)+g(x)\},
\end{equation}
where $f$ is differentiable and $\nabla f$ is Lipschitz with $L$, and $g$ is closed. And both $f$ and $g$ may be nonconvex.
The nonconvex inexact proximal gradient algorithm  can be described as
\begin{equation}\label{pga}
    x^{k+1}=\textbf{prox}_{h g}(x^k-h\nabla f(x^k)+e^k),
\end{equation}
where $h$ is the stepsize, $\textbf{prox}$ is the proximal operator and $e^k$ is the noise. In the convex case, this algorithm is discussed in \cite{villa2013accelerated,schmidt2011convergence}, and the acceleration is studied in \cite{sun2017convergencein}.
\begin{lemma}\label{pgalemma1}
Let $0<h<\frac{1}{L}$ and the sequence $(x^k)_{k\geq 0}$ be generated by algorithm (\ref{pga}), we have
\begin{equation}
    F(x^k)-F(x^{k+1})\geq \frac{1}{4}(\frac{1}{h}-L)\|x^{k+1}-x^k\|^2-\frac{1}{h(1-hL)}\|e^k\|^2.
\end{equation}
\end{lemma}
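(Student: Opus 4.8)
The plan is to combine two standard estimates: the descent lemma for the smooth part $f$ coming from the Lipschitz continuity of $\nabla f$, and the optimality inequality of the proximal step supplied by Lemma \ref{tool}. Writing $u^k := x^k - h\nabla f(x^k) + e^k$, so that $x^{k+1} \in \textbf{prox}_{hg}(u^k)$, I would first apply Lemma \ref{tool} with $J = hg$, $x = u^k$, $z = x^{k+1}$ and comparison point $y = x^k$, which yields
$$hg(x^{k+1}) + \tfrac{1}{2}\|x^{k+1} - u^k\|^2 \leq hg(x^k) + \tfrac{1}{2}\|x^k - u^k\|^2.$$
Since $x^{k+1} - u^k = (x^{k+1} - x^k) + h\nabla f(x^k) - e^k$ while $x^k - u^k = h\nabla f(x^k) - e^k$, expanding both squared norms and cancelling the common term $\tfrac12\|h\nabla f(x^k) - e^k\|^2$ leaves, after dividing by $h$,
$$g(x^{k+1}) - g(x^k) \leq -\tfrac{1}{2h}\|x^{k+1}-x^k\|^2 - \langle \nabla f(x^k), x^{k+1}-x^k\rangle + \tfrac{1}{h}\langle e^k, x^{k+1}-x^k\rangle.$$

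Next I would invoke the descent lemma $f(x^{k+1}) \leq f(x^k) + \langle \nabla f(x^k), x^{k+1}-x^k\rangle + \tfrac{L}{2}\|x^{k+1}-x^k\|^2$ and add it to the previous display. The two gradient inner products cancel, producing
$$F(x^{k+1}) - F(x^k) \leq -\tfrac{1}{2}\bigl(\tfrac{1}{h} - L\bigr)\|x^{k+1}-x^k\|^2 + \tfrac{1}{h}\langle e^k, x^{k+1}-x^k\rangle.$$
All that remains is to absorb the noise cross-term into the two squared-norm terms.

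The main (and essentially the only nontrivial) obstacle is to split this cross-term so that the constants match the statement exactly. I would apply Young's inequality $\tfrac{1}{h}\langle e^k, x^{k+1}-x^k\rangle \leq \tfrac{\varepsilon}{2h}\|x^{k+1}-x^k\|^2 + \tfrac{1}{2h\varepsilon}\|e^k\|^2$ and then calibrate the free parameter by setting $\varepsilon = \tfrac{1}{2}(1-hL)$. With this choice the coefficient of $\|x^{k+1}-x^k\|^2$ collapses to exactly $\tfrac{1}{4}\bigl(\tfrac{1}{h} - L\bigr)$ and the coefficient of $\|e^k\|^2$ becomes exactly $\tfrac{1}{h(1-hL)}$; here the hypothesis $0 < h < \tfrac{1}{L}$ is what guarantees $1 - hL > 0$, so that $\varepsilon > 0$ is admissible and the leading descent coefficient stays positive. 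Rearranging the resulting inequality then gives the claimed bound. No global or K{\L} machinery enters at this stage: the whole argument is a single-step comparison, and the delicacy lies solely in the algebraic tuning of the Young parameter.
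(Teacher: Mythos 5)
Your proposal is correct and follows essentially the same route as the paper's proof: the proximal optimality inequality from Lemma \ref{tool} with comparison point $x^k$, the descent lemma for $f$, summation to cancel the gradient cross-terms, and then the weighted Young/Cauchy--Schwarz splitting of $\tfrac{1}{h}\langle e^k, x^{k+1}-x^k\rangle$ — your explicit choice $\varepsilon=\tfrac{1}{2}(1-hL)$ reproduces exactly the paper's inequality (\ref{pgalemma1-t4}).
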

\begin{proof}
The $L$-Lipschitz of $\nabla f$ gives
\begin{equation}\label{pgalemma1-t1}
    f(x^{k+1})-f(x^k)\leq \langle\nabla f(x^k),x^{k+1}-x^k\rangle+\frac{L}{2}\|x^{k+1}-x^k\|^2.
\end{equation}
On the other hand,
with Lemma \ref{tool},  we have
\begin{equation}
    h g(x^{k+1})+\frac{\|x^k-h\nabla f(x^k)+e^k-x^{k+1}\|^2}{2}\leq h g(x^k)+\frac{\|-h\nabla f(x^k)+e^k\|^2}{2}.
\end{equation}
This is also
\begin{equation}\label{pgalemma1-t2}
    g(x^{k+1})-g(x^k)\leq -\langle \nabla f(x^k),x^{k+1}-x^k\rangle-\frac{\|x^k-x^{k+1}\|^2}{2h}+\frac{\langle e^k,x^{k+1}-x^k\rangle}{h}.
\end{equation}
Summing (\ref{pgalemma1-t1}) and (\ref{pgalemma1-t2}),
\begin{equation}\label{pgalemma1-t3}
    F(x^{k+1})-F(x^k)\leq\frac{1}{2}(L-\frac{1}{h})\|x^{k+1}-x^k\|^2+\frac{\langle e^k,x^{k+1}-x^k\rangle}{h}.
\end{equation}
With the Cauchy-Schwarz inequality, we have
\begin{equation}\label{pgalemma1-t4}
    \frac{\langle e^k,x^{k+1}-x^k\rangle}{h}\leq \frac{1}{4}(\frac{1}{h}-L)\|x^{k+1}-x^k\|^2+\frac{1}{h(1-hL)}\|e^k\|^2
\end{equation}
Combining (\ref{pgalemma1-t4}) and (\ref{pgalemma1-t3}), we then prove the result.
\end{proof}

\begin{lemma}\label{proxerrorbound}
Let the sequence $(x^k)_{k\geq 0}$ be generated by algorithm (\ref{pga}), we have
\begin{equation}
   \emph{dist}(\textbf{0},\partial F(x^{k+1}))\leq (\frac{1}{h}+L)\|x^k-x^{k+1}\|+\frac{1}{h}\|e^k\|.
\end{equation}
\end{lemma}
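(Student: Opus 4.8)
The plan is to produce an explicit element of $\partial F(x^{k+1})$ from the optimality condition of the proximal step and then bound its norm, since $\textrm{dist}(\textbf{0},\partial F(x^{k+1}))$ is at most the norm of any such element. First I would invoke the remark following Lemma~\ref{tool}: writing $y^k := x^k - h\nabla f(x^k)+e^k$, the inclusion $x^{k+1}\in\textbf{prox}_{hg}(y^k)$ yields $y^k - x^{k+1}\in\partial(hg)(x^{k+1})=h\,\partial g(x^{k+1})$, hence
\begin{equation}
\frac{x^k - h\nabla f(x^k)+e^k - x^{k+1}}{h}\in\partial g(x^{k+1}).
\end{equation}

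Next I would use that $f$ is differentiable, so $\partial F(x^{k+1})=\nabla f(x^{k+1})+\partial g(x^{k+1})$. Adding $\nabla f(x^{k+1})$ to the subgradient above produces the explicit element
\begin{equation}
v := \frac{x^k - x^{k+1}}{h} + \bigl(\nabla f(x^{k+1})-\nabla f(x^k)\bigr) + \frac{e^k}{h}\in\partial F(x^{k+1}).
\end{equation}
The decisive bookkeeping step is that the gradient inside the prox is evaluated at $x^k$, whereas the subdifferential of $F$ requires the gradient at $x^{k+1}$; rewriting the difference as $\nabla f(x^{k+1})-\nabla f(x^k)$ is exactly what lets the Lipschitz hypothesis enter.

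Finally I would apply the triangle inequality to $\|v\|$ and estimate $\|\nabla f(x^{k+1})-\nabla f(x^k)\|\leq L\|x^{k+1}-x^k\|$ by the $L$-Lipschitz continuity of $\nabla f$, giving
\begin{equation}
\textrm{dist}(\textbf{0},\partial F(x^{k+1}))\leq\|v\|\leq\Bigl(\frac{1}{h}+L\Bigr)\|x^k-x^{k+1}\|+\frac{1}{h}\|e^k\|,
\end{equation}
which is the claimed bound. I do not anticipate a genuine obstacle here: the proof is essentially routine once the prox optimality condition is correctly transcribed. The only point demanding care is the gradient mismatch just mentioned, and ensuring that the noise contributes the clean coefficient $\tfrac{1}{h}$ (it enters $v$ only through the single term $e^k/h$, so no further estimate on $\|e^k\|$ is needed). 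This matches the companion descent estimate in Lemma~\ref{pgalemma1}, so that conditions~(\ref{condition}) hold with $\omega^k\equiv x^k$, $\tau=0$, and $\eta_k$ proportional to $\|e^k\|$.
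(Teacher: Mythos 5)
Your proposal is correct and follows essentially the same route as the paper's own proof: extract the explicit subgradient $\frac{x^k-x^{k+1}}{h}-\nabla f(x^k)+\frac{e^k}{h}\in\partial g(x^{k+1})$ from the prox optimality condition, shift it into $\partial F(x^{k+1})=\nabla f(x^{k+1})+\partial g(x^{k+1})$, and bound its norm via the triangle inequality and the $L$-Lipschitz continuity of $\nabla f$. The bookkeeping point you flag (gradient evaluated at $x^k$ versus $x^{k+1}$) is exactly how the paper's argument proceeds, so there is nothing to add.
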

\begin{proof}
We have
\begin{equation}
   \frac{x^k-x^{k+1}}{h}-\nabla f(x^k)+\frac{e^k}{h}\in \partial g(x^{k+1}).
\end{equation}
Therefore,
\begin{equation}
   \frac{x^k-x^{k+1}}{h}+\nabla f(x^{k+1})-\nabla f(x^k)+\frac{e^k}{h}\in \nabla f(x^{k+1})+\partial g(x^{k+1})=\partial F(x^{k+1}).
\end{equation}
Thus, we have
\begin{eqnarray}
      \textrm{dist}(\textbf{0},\partial F(x^{k+1}))&\leq& \|\frac{x^k-x^{k+1}}{h}+\nabla f(x^{k+1})-\nabla f(x^k)+\frac{e^k}{h}\|\nonumber\\
      &\leq&\frac{1}{h}\|x^k-x^{k+1}\|+L\|x^k-x^{k+1}\|+\frac{\|e^k\|}{h}.
\end{eqnarray}
\end{proof}

\begin{lemma}
Let $0<h<\frac{1}{L}$ and the sequence $(x^k)_{k\geq 0}$ be generated by algorithm (\ref{pga}), and $F$ be coercive. We also assume that $e^k\rightarrow \textbf{0}$. Then, for  $x^*$ being the stationary point of $(x^k)_{k\geq 0}$, there exists a subsequence $(x^{k_j})_{j\geq 0}$ converges to $x^*$ satisfying $F(x^{k_j})\rightarrow F(x^*)$ and $x^*\in \emph{crit}(F)$.
\end{lemma}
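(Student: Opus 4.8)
The plan is to verify the continuity condition---the third line of \eqref{condition}---for the specific scheme \eqref{pga}, and then to promote the given cluster point to a critical point of $F$. Throughout I would lean on two structural facts about the iteration. First, coercivity of $F$ keeps the iterates bounded, so cluster points exist. Second, summing the sufficient-descent estimate of Lemma \ref{pgalemma1} against $\inf F>-\infty$ gives $\sum_k\|x^{k+1}-x^k\|^2<+\infty$ whenever the noise is square summable (which holds in the regime of Theorem \ref{Th-conver}, where $\|e^k\|=\eta_k=\mathcal{O}(k^{-\alpha})$ with $\alpha>1$), and hence $\|x^{k+1}-x^k\|\to\textbf{0}$. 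Combined with the hypothesis $e^k\to\textbf{0}$, this is essentially the only input I need: fixing the cluster point $x^*$ and a subsequence $x^{k_j}\to x^*$, the vanishing successive differences force the predecessors to share the same limit, $x^{k_j-1}\to x^*$, while $e^{k_j-1}\to\textbf{0}$.

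For the value convergence $F(x^{k_j})\to F(x^*)$ I would split $F=f+g$. The easy half is $\liminf_j g(x^{k_j})\ge g(x^*)$ from lower semicontinuity of the closed function $g$, together with $f(x^{k_j})\to f(x^*)$ from continuity of $f$. The substantive half is the reverse inequality for $g$, which I would extract from the optimality of the proximal step rather than from any continuity of $g$. Writing the update at index $k_j-1$ as $x^{k_j}\in\textbf{prox}_{hg}(y^{k_j-1})$ with $y^{k_j-1}:=x^{k_j-1}-h\nabla f(x^{k_j-1})+e^{k_j-1}$, Lemma \ref{tool} with the test point $x^*$ gives
\[
hg(x^{k_j})+\tfrac12\|x^{k_j}-y^{k_j-1}\|^2\le hg(x^*)+\tfrac12\|x^*-y^{k_j-1}\|^2 .
\]
Because $x^{k_j-1}\to x^*$, $\nabla f$ is continuous and $e^{k_j-1}\to\textbf{0}$, the vector $y^{k_j-1}$ converges to $x^*-h\nabla f(x^*)$, so the two squared-norm terms tend to the common value $\tfrac12\|h\nabla f(x^*)\|^2$; taking $\limsup_j$ then leaves $\limsup_j g(x^{k_j})\le g(x^*)$. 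Hence $g(x^{k_j})\to g(x^*)$, and adding the limit for $f$ yields $F(x^{k_j})\to F(x^*)$.

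Criticality is then a short closedness argument. I would reuse the explicit subgradient constructed inside the proof of Lemma \ref{proxerrorbound}, namely $v^{k}:=\frac{x^{k-1}-x^{k}}{h}+\nabla f(x^{k})-\nabla f(x^{k-1})+\frac{e^{k-1}}{h}\in\partial F(x^{k})$. Along the subsequence, $\|v^{k_j}\|\to0$, since $\|x^{k_j}-x^{k_j-1}\|\to0$, $\nabla f(x^{k_j})-\nabla f(x^{k_j-1})\to\textbf{0}$ by continuity of $\nabla f$ with $x^{k_j},x^{k_j-1}\to x^*$, and $e^{k_j-1}\to\textbf{0}$. Having already secured $x^{k_j}\to x^*$ and $F(x^{k_j})\to F(x^*)$, the closedness of $\textrm{graph}(\partial F)$ recorded in Section 2 gives $\textbf{0}\in\partial F(x^*)$, i.e. $x^*\in\textrm{crit}(F)$.

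The step I expect to be the crux is the upper bound $\limsup_j g(x^{k_j})\le g(x^*)$. Since $g$ is only closed (lower semicontinuous) and possibly nonconvex, one cannot pass to the limit inside $g$, so the inequality must be wrung entirely out of the proximal inequality of Lemma \ref{tool}; the mechanism that makes the two perturbation terms cancel in the limit is precisely that $\|x^{k+1}-x^k\|\to\textbf{0}$ drags the predecessor $x^{k_j-1}$ to the same limit $x^*$. Once that cancellation is in hand, both the value convergence and the criticality reduce to routine semicontinuity and closedness bookkeeping.
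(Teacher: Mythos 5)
Your proposal is correct and follows essentially the same route as the paper's own proof: boundedness from coercivity and descent, predecessor convergence $x^{k_j-1}\rightarrow x^*$, the proximal inequality of Lemma \ref{tool} tested at $x^*$ to obtain $\limsup_j g(x^{k_j})\leq g(x^*)$, lower semicontinuity of $g$ for the reverse bound, continuity of $f$ for the value convergence, and the subgradient bound of Lemma \ref{proxerrorbound} together with closedness of $\partial F$ for criticality. The only (minor) difference is that you state explicitly the square-summability of the noise needed to force $\|x^{k+1}-x^k\|\rightarrow 0$, which the paper absorbs implicitly into its citation of Lemma \ref{points}; this is, if anything, more careful than the original.
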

\begin{proof}
With Lemma \ref{pgalemma1}, $(x^k)_{k\geq 0}$ is bounded. For any $x^*\in\textrm{crit}(F)$, there exists a subsequence $(x^{k_j})_{j\geq 0}$ converges to $x^*$. With Lemmas \ref{points} and \ref{pgalemma1}, we also have
\begin{equation}
    x^{k_j-1}\rightarrow x^*.
\end{equation}
And in each iteration, with Lemma \ref{tool}, we have
\begin{equation}
    h g(x^{k_j})+\frac{\|x^{k_j-1}-h\nabla f(x^{k_j-1})+e^{k_j}-x^{k_j}\|^2}{2}\leq h g(x^{*})+\frac{\|x^{k_j-1}-h\nabla f(x^{k_j-1})+e^{k_j-1}-x^*\|^2}{2}.
\end{equation}
Taking $j\rightarrow+\infty$, we have
\begin{equation}
    \underset{j\rightarrow+\infty}{\lim\sup}~~g(x^{k_j})\leq g(x^*).
\end{equation}
And recalling the lower semi-continuity of $g$,
\begin{equation}
     g(x^*)\leq \underset{j\rightarrow+\infty}{\lim\inf}~~g(x^{k_j}).
\end{equation}
That means $\lim_{j} g(x^{k_j})=g(x^*)$; and combining the continuity of $f$, we then prove $\lim_{j} F(x^{k_j})=F(x^*)$.
With Lemma \ref{proxerrorbound}, we can see $\textrm{dist}(\textbf{0},\partial F(x^*))=0$, that is, $x^*\in \textrm{crit}(F)$.
\end{proof}
With previous results, we then prove the following proposition.
\begin{proposition}\label{converpga}
Suppose that $f$ and $g$ are both semi-algebraic,  $F$ is  coercive, and $0<h<\frac{1}{L}$. Let the sequence $(x^k)_{k\geq 0}$
be generated by  scheme (\ref{pga}). If the sequence $(e^k)_{k\geq 0}$ satisfies
\begin{equation}\label{pgacond}
   \|e^k\|=\mathcal{O}(\frac{1}{k^{\alpha}}),\,\alpha>1.
\end{equation}
Then, the sequence $(x^k)_{k\geq 0}$ has finite length, i.e.
\begin{equation}
\sum_{k=0}^{+\infty}\|x^{k+1}-x^k\|<+\infty.
\end{equation}
And $\{x^k\}_{k=0,1,2,3,\ldots}$ converges to a critical point $x^*$ of $F$.
\end{proposition}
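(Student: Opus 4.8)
The plan is to verify that algorithm (\ref{pga}) fits the abstract framework of Theorem \ref{Th-conver} and then invoke that theorem directly. Since this subsection stipulates $\omega^k \equiv x^k$, condition (\ref{pointass}) holds trivially (the two series are literally identical), and I may take $\tau = 0$ and identify the auxiliary noise sequence as $\eta_k := \|e^k\|$. What remains is only to read off the three conditions in (\ref{condition}) from the lemmas already proved, and to confirm the structural hypotheses (semi-algebraicity, closedness, coercivity) together with the decay rate on $\eta_k$.

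First I would extract the pseudo sufficient descent condition from Lemma \ref{pgalemma1}: with $a = \frac{1}{4}(\frac{1}{h}-L) > 0$ and $b = \frac{1}{h(1-hL)} > 0$, both positive precisely because $0 < h < 1/L$, the inequality $F(x^k) - F(x^{k+1}) \ge a\|x^{k+1}-x^k\|^2 - b\eta_k^2$ is exactly the first line of (\ref{condition}). Next, the pseudo relative error condition is Lemma \ref{proxerrorbound} with $c = \frac{1}{h}+L$ and $d = \frac{1}{h}$; since $\tau = 0$ the sum $\sum_{j=k-\tau}^{k}\|\omega^{j+1}-\omega^j\|$ collapses to the single term $\|x^{k+1}-x^k\|$, matching the lemma. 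The continuity condition is precisely the content of the unnamed lemma preceding the proposition, which shows that along any subsequence converging to a stationary point $x^*$ one has $F(x^{k_j}) \to F(x^*)$; I note that that lemma requires $e^k \to \mathbf{0}$, which is guaranteed by the hypothesis (\ref{pgacond}).

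For the structural hypotheses I would observe that $F = f + g$ is a finite sum of semi-algebraic functions, hence itself semi-algebraic, and it is closed and coercive by assumption. Finally, the noise assumption (\ref{pgacond}), namely $\|e^k\| = \mathcal{O}(1/k^{\alpha})$ with $\alpha > 1$, is exactly $\eta_k = \mathcal{O}(1/k^{\alpha})$, the form required in (\ref{assass}). With every hypothesis of Theorem \ref{Th-conver} now verified, the conclusions — finite length $\sum_k \|x^{k+1}-x^k\| < +\infty$ and convergence of $(x^k)_{k\ge 0}$ to a critical point of $F$ — follow at once.

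I would therefore expect essentially no obstacle in this proof: it is bookkeeping, since the genuinely substantive estimates have already been isolated upstream in Lemmas \ref{pgalemma1} and \ref{proxerrorbound} and in the continuity lemma. The only point demanding care is matching the constants so that $a$ is strictly positive, which is exactly where the stepsize restriction $0 < h < 1/L$ is used; were one to allow $h = 1/L$ the descent coefficient would degenerate. The most delicate ingredient overall is the continuity condition, established through the $\limsup$/$\liminf$ sandwiching of $g(x^{k_j})$, but as that argument is already supplied I do not need to reproduce it.
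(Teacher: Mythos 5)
Your proposal is correct and follows essentially the same route as the paper's own proof, which likewise reduces Proposition \ref{converpga} to checking conditions (\ref{condition}) and (\ref{pointass}) via Lemmas \ref{pgalemma1} and \ref{proxerrorbound}, the continuity lemma, and an application of Theorem \ref{Th-conver}. The only difference is that the paper states this verification tersely, whereas you spell out the constants $a,b,c,d$, the choices $\omega^k\equiv x^k$, $\tau=0$, $\eta_k=\|e^k\|$, and the structural hypotheses explicitly, which is exactly the intended bookkeeping.
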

\begin{proof}
From (\ref{pgacond}), we have $\|e^k\|\rightarrow 0$. And $F$ is a semi-algebraic function. With lemmas proved before in this subsection and Theorem \ref{Th-conver}, we then obtain the result.
\end{proof}
\subsection{Inexact proximal alternating linearized  minimization algorithm}
In this part, we use the following convention
$$x=(y,z), x^k=(y^k,z^k), e^k=(\alpha^k, \beta^k)$$
The following problem  is considered
\begin{equation}\label{al}
\min_{y,z}\{\Phi(y,z):=f(y)+H(y,z)+g(z)\},
\end{equation}
where the function $H$ is assumed to be differentiable and satisfy
\begin{subequations}
\begin{align}
&\|\nabla_y H(y^1,z)-\nabla_y H(y^2,z)\|\leq M\|y^1-y^2\|,\\
&\|\nabla_z H(y,z^1)-\nabla_z H(y,z^2)\|\leq N\|z^1-z^2\|,\\
&\|\nabla_y H(x^1)-\nabla_y H(x^2)\|\leq L\|x^1-x^2\|.
\end{align}
\end{subequations}
An intuitive algorithm for solving problem (\ref{al}) is the  alternating minimization scheme, i.e.,  fixing one of $y$ and $z$ in each iteration and then minimizing the other one \cite{nocedal2006sequential}; and the convergence rate is proved in \cite{beck2015convergence} in the convex case. In the nonconvex case, the alternating minimization scheme can barely derive the descent property, thus the authors propose the proximal alternating minimization \cite{attouch2010proximal}. However, both  alternating minimization  and proximal  alternating minimization have an obvious drawback:  a minimization problem is solved in each
iteration, the stopping criterion may be hard to determine, and error accumulates. Therefore, several variants are developed \cite{bolte2014proximal,sun2017little,shefi2016rate}, and the Proximal Alternating  Linearized  Minimization (PALM) algorithm \cite{bolte2014proximal} is one of them. The inexact PALM can be described as
\begin{subequations}\label{plam}
\begin{align}
&y^{k+1}=\textbf{prox}_{\gamma f}(y^k-\gamma\nabla_{y}H(y^k,z^k)+\alpha^k),\\
&z^{k+1}=\textbf{prox}_{\lambda g}(z^k-\lambda\nabla_{z}H(y^{k+1},z^k)+\beta^k).
\end{align}
\end{subequations}

\begin{lemma}\label{plamlemma1}
Let  the sequence $(x^k)_{k\geq 0}$ be generated by algorithm (\ref{plam}). If
\begin{equation}\label{plamcondition}
    \min\{M-\frac{1}{\gamma},\,~N-\frac{1}{\lambda}\}>0,
\end{equation}
we have
\begin{equation}
    \Phi(x^k)-\Phi(x^{k+1})\geq \nu\|x^{k+1}-x^k\|^2-\sigma\|e^k\|^2,
\end{equation}
where $\nu:=\min\{\frac{1}{4}(M-\frac{1}{\gamma}),\frac{1}{4}(N-\frac{1}{\lambda})\}$ and $\sigma:=\max\{\frac{1}{\gamma(1-\gamma M)}, \frac{1}{\lambda_k(1-\lambda N)}\}$
\end{lemma}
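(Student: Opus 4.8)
The plan is to reproduce the single-block argument of Lemma~\ref{pgalemma1} \emph{twice}, once for the $y$-update and once for the $z$-update, and then add the two estimates so that the shared quantity $H(y^{k+1},z^k)$ telescopes away. The key observation is that each half-step of \eqref{plam} is exactly an inexact proximal-gradient step for a single block: the $y$-update linearizes $H(\cdot,z^k)$, whose $y$-gradient is $M$-Lipschitz, and the $z$-update linearizes $H(y^{k+1},\cdot)$, whose $z$-gradient is $N$-Lipschitz. I will therefore treat the two blocks separately, with the respective partial Lipschitz constants $M$ and $N$, being careful to freeze $z=z^k$ in the first estimate and $y=y^{k+1}$ in the second.

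For the $y$-step I would apply the descent lemma to $H(\cdot,z^k)$ and combine it with the prox inequality of Lemma~\ref{tool} for $y^{k+1}$ (tested against $y=y^k$), exactly as inequalities \eqref{pgalemma1-t1}--\eqref{pgalemma1-t2} are combined in Lemma~\ref{pgalemma1}. The inner product with $\nabla_y H(y^k,z^k)$ cancels, and the noise cross term $\tfrac1\gamma\langle\alpha^k,y^{k+1}-y^k\rangle$ is absorbed by the Young-type inequality \eqref{pgalemma1-t4} (with $h,L,e^k$ replaced by $\gamma,M,\alpha^k$). This yields
$$[f+H(\cdot,z^k)](y^{k+1})-[f+H(\cdot,z^k)](y^k)\leq -\tfrac14\big(\tfrac1\gamma-M\big)\|y^{k+1}-y^k\|^2+\tfrac{1}{\gamma(1-\gamma M)}\|\alpha^k\|^2.$$
The $z$-step is handled identically with $(\gamma,M,\alpha^k)$ replaced by $(\lambda,N,\beta^k)$ and the block $H(y^{k+1},\cdot)$, giving the analogous bound on $[H(y^{k+1},\cdot)+g](z^{k+1})-[H(y^{k+1},\cdot)+g](z^k)$.

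Adding the two estimates, the term $H(y^{k+1},z^k)$ occurs once positively and once negatively and telescopes, leaving exactly $\Phi(x^{k+1})-\Phi(x^k)$ on the left. On the right I bound the two quadratic coefficients below by $\nu$ and the two noise coefficients above by $\sigma$, then recombine using $\|x^{k+1}-x^k\|^2=\|y^{k+1}-y^k\|^2+\|z^{k+1}-z^k\|^2$ and $\|e^k\|^2=\|\alpha^k\|^2+\|\beta^k\|^2$ to obtain the claim. I expect the only real subtlety to be the bookkeeping of which block is frozen at which iterate: $z$ must stay at $z^k$ in the first estimate (so that the linearization point matches the argument of the prox in \eqref{plam}) and $y$ must stay at $y^{k+1}$ in the second, since this is precisely what makes $H(y^{k+1},z^k)$ cancel and produces a clean telescoping sum. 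The positivity of $\nu$ and $\sigma$, which is what makes the descent estimate useful, is exactly the content of the stepsize restriction \eqref{plamcondition}.
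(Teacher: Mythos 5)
Your proposal is correct and takes essentially the same route as the paper's own proof: the paper likewise runs the single-block descent-plus-prox argument of Lemma \ref{pgalemma1} once for each block, obtaining \eqref{plamlemma1-t3} for $f+H(\cdot,z^k)$ and \eqref{plamlemma1-t5} for $g+H(y^{k+1},\cdot)$, and then adds them so that $H(y^{k+1},z^k)$ telescopes, exactly as you describe. One small remark: both your argument and the paper's actually require the stepsize condition with the signs $\frac{1}{\gamma}-M>0$ and $\frac{1}{\lambda}-N>0$ (so that the quadratic coefficients and $\sigma$ are positive), so the condition \eqref{plamcondition} as printed appears to be a sign typo rather than a flaw in your reasoning.
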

\begin{proof}
The $M$-Lipschitz of $\nabla_{y} H(y,z^k)$ gives
\begin{equation}\label{plamlemma1-t1}
    H(y^{k+1},z^k)-H(y^k,z^k)\leq \langle\nabla_y H(y^k,z^k),y^{k+1}-y^k\rangle+\frac{M}{2}\|y^{k+1}-y^k\|^2.
\end{equation}
From Lemma \ref{tool},  we have
\begin{equation}
    \gamma f(y^{k+1})+\frac{\|y^k-\gamma\nabla_{y}H(y^k,z^k)+\alpha^k-y^{k+1}\|^2}{2}\leq \gamma f(y^k)+\frac{\|-\gamma\nabla_{y}H(y^k,z^k)+\alpha^k\|^2}{2}.
\end{equation}
This is also
\begin{equation}\label{plamlemma1-t2}
    f(y^{k+1})-f(y^k)\leq -\langle \nabla_y H(y^k,z^k),y^{k+1}-y^k\rangle-\frac{\|y^k-y^{k+1}\|^2}{2\gamma}+\frac{\langle \alpha^k,y^{k+1}-y^k\rangle}{\gamma}.
\end{equation}
Summing (\ref{plamlemma1-t1}) and (\ref{plamlemma1-t2}), with the Cauchy-Schwarz inequality
\begin{equation}\label{plamlemma1-t4}
    \frac{\langle \alpha^k,y^{k+1}-y^k\rangle}{\gamma}\leq \frac{1}{4}(\frac{1}{\gamma}-M)\|y^{k+1}-y^k\|^2+\frac{1}{\gamma(1-\gamma M)}\|\alpha^k\|^2,
\end{equation}
we then have
\begin{equation}\label{plamlemma1-t3}
    [f(y^{k+1})+H(y^{k+1},z^k)]-[f(y^k)+H(y^{k},z^k)]\leq\frac{1}{4}(M-\frac{1}{\gamma})\|y^{k+1}-y^k\|^2+\frac{\|\alpha^k\|^2}{\gamma(1-\gamma M)}.
\end{equation}
Similarly, we can prove
\begin{equation}\label{plamlemma1-t5}
    [g(z^{k+1})+H(y^{k+1},z^{k+1})]-[g(z^k)+H(y^{k+1},z^k)]\leq\frac{1}{4}(N-\frac{1}{\lambda})\|z^{k+1}-z^k\|^2+\frac{\|\beta^k\|^2}{\lambda(1-\lambda N)}.
\end{equation}
Combining (\ref{pgalemma1-t4}) and (\ref{pgalemma1-t3}), we then prove the result.
\end{proof}

\begin{lemma}\label{plamre}
Let the sequence be generated by algorithm (\ref{plam}),
 we have
\begin{equation}
   \emph{dist}(\textbf{0},\partial \Phi(x^{k+1}))\leq S\|x^k-x^{k+1}\|+D\|e^k\|,
\end{equation}
where $S:=\frac{1}{\lambda}+\frac{1}{\gamma}+2L$ and $D:=\sqrt{\frac{1}{\gamma^2}+\frac{1}{\lambda^2}}$.
\end{lemma}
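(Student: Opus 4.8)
The plan is to exhibit one explicit element of $\partial\Phi(x^{k+1})$ coming from the optimality conditions of the two proximal steps, and then bound its norm; since the distance to the subdifferential is at most the norm of any element in it, this suffices. Because $H$ is smooth, the limiting subdifferential of $\Phi=f+H+g$ splits blockwise as
\begin{equation*}
\partial\Phi(y,z)=\bigl(\partial f(y)+\nabla_y H(y,z)\bigr)\times\bigl(\partial g(z)+\nabla_z H(y,z)\bigr).
\end{equation*}
Applying the optimality relation $x-z\in\partial J(z)$ from Lemma \ref{tool} to the $y$-update with $J=\gamma f$ yields $\frac{y^k-y^{k+1}}{\gamma}-\nabla_y H(y^k,z^k)+\frac{\alpha^k}{\gamma}\in\partial f(y^{k+1})$, and to the $z$-update with $J=\lambda g$ yields $\frac{z^k-z^{k+1}}{\lambda}-\nabla_z H(y^{k+1},z^k)+\frac{\beta^k}{\lambda}\in\partial g(z^{k+1})$. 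Adding $\nabla_y H(x^{k+1})$ to the first and $\nabla_z H(x^{k+1})$ to the second produces a concrete vector $(u_y,u_z)\in\partial\Phi(x^{k+1})$, so that $\mathrm{dist}(\textbf{0},\partial\Phi(x^{k+1}))\le\|(u_y,u_z)\|$.

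Next I would regroup each block into a step-difference term, a gradient-difference term, and a noise term. For the first block this gives $u_y=\frac{y^k-y^{k+1}}{\gamma}+[\nabla_y H(x^{k+1})-\nabla_y H(x^k)]+\frac{\alpha^k}{\gamma}$, using $\nabla_y H(y^k,z^k)=\nabla_y H(x^k)$; for the second block, $u_z=\frac{z^k-z^{k+1}}{\lambda}+[\nabla_z H(y^{k+1},z^{k+1})-\nabla_z H(y^{k+1},z^k)]+\frac{\beta^k}{\lambda}$. The gradient differences are then handled by the full-variable Lipschitz continuity of $\nabla H$: the $y$-difference is at most $L\|x^{k+1}-x^k\|$ directly from the third Lipschitz assumption, and the $z$-difference is bounded by the same $L$-estimate (its two arguments differ only in the $z$-block). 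Combined with $\|y^{k+1}-y^k\|,\|z^{k+1}-z^k\|\le\|x^{k+1}-x^k\|$, this gives $\|A_y\|\le(\tfrac1\gamma+L)\|x^{k+1}-x^k\|$ and $\|A_z\|\le(\tfrac1\lambda+L)\|x^{k+1}-x^k\|$ for the noise-free parts $A_y,A_z$ of $u_y,u_z$.

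Finally I would assemble the two blocks. Writing $(u_y,u_z)=(A_y,A_z)+(\alpha^k/\gamma,\beta^k/\lambda)$ and using $\sqrt{s^2+t^2}\le s+t$ for nonnegative $s,t$ gives $\|(A_y,A_z)\|\le\|A_y\|+\|A_z\|\le S\|x^{k+1}-x^k\|$ with $S=\frac1\gamma+\frac1\lambda+2L$. For the noise part, dropping the nonnegative cross terms in $(\frac1{\gamma^2}+\frac1{\lambda^2})(\|\alpha^k\|^2+\|\beta^k\|^2)$ yields $\frac{\|\alpha^k\|^2}{\gamma^2}+\frac{\|\beta^k\|^2}{\lambda^2}\le(\frac1{\gamma^2}+\frac1{\lambda^2})(\|\alpha^k\|^2+\|\beta^k\|^2)$, hence $\|(\alpha^k/\gamma,\beta^k/\lambda)\|\le D\|e^k\|$ with $D=\sqrt{\frac1{\gamma^2}+\frac1{\lambda^2}}$ and $\|e^k\|^2=\|\alpha^k\|^2+\|\beta^k\|^2$. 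Summing the two estimates gives the claim.

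The step deserving the most care is the gradient-difference bound for the $z$-block: to obtain the symmetric constant $2L$ one must invoke a full-variable Lipschitz bound with constant $L$ for $\nabla_z H$ as well, rather than the block constant $N$ from the second assumption. Everything else is bookkeeping on the optimality conditions and on combining the two-block norm estimates.
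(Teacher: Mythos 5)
Your proof follows essentially the same route as the paper's: the same blockwise optimality conditions from the two proximal steps, the same insertion of the partial gradients of $H$ to build an explicit element of $\partial\Phi(x^{k+1})$, and the same Lipschitz estimates, with the final assembly done slightly more carefully than in the paper (explicit Cauchy--Schwarz on the noise block to produce $D=\sqrt{\frac{1}{\gamma^2}+\frac{1}{\lambda^2}}$). Your closing caveat is also accurate: the paper itself silently bounds the $z$-block gradient difference $\|\nabla_z H(y^{k+1},z^{k+1})-\nabla_z H(y^{k+1},z^k)\|$ by $L\|z^{k+1}-z^k\|$, although its stated assumptions only give the block constant $N$ for that difference, so strictly speaking both arguments yield $S=\frac{1}{\gamma}+\frac{1}{\lambda}+L+N$ unless one additionally assumes $N\le L$ (or full-variable Lipschitz continuity of $\nabla_z H$ with constant $L$).
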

\begin{proof}
In updating $y^{k+1}$, we have
\begin{equation}
   \frac{y^k-y^{k+1}}{\gamma}-\nabla_y H(y^k,z^k)+\frac{\alpha^k}{\gamma}\in \partial f(y^{k+1}).
\end{equation}
Therefore, we are then led to
\begin{equation}
    \frac{y^k-y^{k+1}}{\gamma}+\nabla_y H(y^{k+1},z^{k+1})-\nabla_y H(y^k,z^k)+\frac{\alpha^k}{\gamma}\in \nabla_y H(y^{k+1},z^{k+1})+\partial f(y^{k+1})=\partial_y \Phi(x^{k+1}).
\end{equation}
Thus, we have
\begin{eqnarray}\label{plamre-t1}
      \textrm{dist}(\textbf{0},\partial_y \Phi(x^{k+1}))&\leq& \|\frac{y^k-y^{k+1}}{\gamma}+\nabla_y H(y^{k+1},z^{k+1})-\nabla_y H(y^k,z^k)+\frac{\alpha^k}{\gamma}\|\nonumber\\
      &\leq&\frac{\|y^k-y^{k+1}\|}{\gamma}+L\|x^{k+1}-x^k\|+\frac{\|\alpha^k\|}{\gamma}.
\end{eqnarray}
In updating $z^{k+1}$, we have
\begin{eqnarray}\label{plamre-t2}
      \textrm{dist}(\textbf{0},\partial_z \Phi(x^{k+1}))
      \leq\frac{\|z^k-z^{k+1}\|}{\lambda}+L\|z^{k+1}-z^k\|+\frac{\|\beta^k\|}{\lambda}.
\end{eqnarray}
Combining (\ref{plamre-t1}) and (\ref{plamre-t2}), we then prove the result.
\end{proof}

\begin{lemma}
Let  the sequence $(x^k)_{k\geq 0}$ be generated by algorithm (\ref{pga}), and $\Phi$ be coercive, and condition (\ref{plamcondition}) hold, $e^k\rightarrow \textbf{0}$.  Then, for any $x^*$ being the stationary point of $(x^k)_{k\geq 0}$, there exists a subsequence $(x^{k_j})_{j\geq 0}$ converges to $x^*$ satisfying $\Phi(x^{k_j})\rightarrow \Phi(x^*)$ and $x^*\in \emph{crit}(\Phi)$.
\end{lemma}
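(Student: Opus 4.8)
The plan is to reproduce, for the two-block structure of PALM, the same continuity-condition argument already carried out for the inexact proximal gradient method. First I would use the descent estimate of Lemma~\ref{plamlemma1} together with the coercivity of $\Phi$ to conclude that $(x^k)_{k\geq 0}$ is bounded and that $\|x^{k+1}-x^k\|\to 0$ (by the same telescoping argument as in Lemma~\ref{descend}(2), the extra noise term being controlled by the summability of the noise available in the ambient application). Consequently, fixing a stationary point $x^*=(y^*,z^*)$ and a subsequence $x^{k_j}\to x^*$, the predecessors also converge, $x^{k_j-1}\to x^*$, since $\|x^{k_j-1}-x^{k_j}\|\to 0$. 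This convergence of predecessors is the fact I will exploit repeatedly, and I would then split the remaining work across the two prox steps.

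For the nonsmooth terms I would invoke the prox inequality of Lemma~\ref{tool} at iteration $k_j-1$. For the $y$-block, testing the minimizing inequality for $y^{k_j}=\textbf{prox}_{\gamma f}(y^{k_j-1}-\gamma\nabla_y H(y^{k_j-1},z^{k_j-1})+\alpha^{k_j-1})$ against the point $y^*$ and letting $j\to\infty$, both quadratic terms converge to the common limit $\tfrac12\|\gamma\nabla_y H(y^*,z^*)\|^2$ (using $y^{k_j}\to y^*$, $y^{k_j-1}\to y^*$, $\alpha^{k_j-1}\to\mathbf 0$, and continuity of $\nabla_y H$), so they cancel and leave $\limsup_j f(y^{k_j})\leq f(y^*)$. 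The $z$-block is analogous, with the subtlety that the $z$-update is linearized around the \emph{already updated} $y^{k_j}$; since $y^{k_j}\to y^*$, $z^{k_j-1}\to z^*$, $\beta^{k_j-1}\to\mathbf 0$ and $\nabla_z H$ is continuous, the same cancellation yields $\limsup_j g(z^{k_j})\leq g(z^*)$. Combining each of these with the lower semicontinuity of $f$ and $g$ forces $\lim_j f(y^{k_j})=f(y^*)$ and $\lim_j g(z^{k_j})=g(z^*)$, and the differentiability (hence continuity) of $H$ gives $H(x^{k_j})\to H(x^*)$. Adding the three pieces proves $\Phi(x^{k_j})\to\Phi(x^*)$.

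To obtain $x^*\in\mathrm{crit}(\Phi)$ I would use the relative-error estimate of Lemma~\ref{plamre}: at index $k_j-1$ it reads $\mathrm{dist}(\mathbf 0,\partial\Phi(x^{k_j}))\leq S\|x^{k_j-1}-x^{k_j}\|+D\|e^{k_j-1}\|$, whose right-hand side tends to $0$ because $\|x^{k+1}-x^k\|\to 0$ and $e^k\to\mathbf 0$. Together with $\Phi(x^{k_j})\to\Phi(x^*)$ established above, the closedness of the graph of $\partial\Phi$ then yields $\mathbf 0\in\partial\Phi(x^*)$, i.e.\ $x^*\in\mathrm{crit}(\Phi)$.

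I expect the main obstacle to be the bookkeeping in the $\limsup$ arguments of the second step rather than any conceptual difficulty: one must verify that both quadratic terms in each prox inequality genuinely converge to the same limit so that they cancel, and in the $z$-block one must carefully track the cross-dependence on $y^{k_j}$ (not $y^{k_j-1}$) introduced by the Gauss--Seidel ordering of PALM. A secondary point worth stating explicitly is the justification of $\|x^{k+1}-x^k\|\to 0$, since Lemma~\ref{plamlemma1} carries the extra noise term $-\sigma\|e^k\|^2$; this is handled exactly as in the proximal gradient case through the summability of $(e^k)_{k\geq 0}$ in the surrounding application.
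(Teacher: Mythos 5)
Your proposal is correct and follows essentially the same route as the paper's proof: boundedness from Lemma~\ref{plamlemma1} plus coercivity, convergence of the predecessors $x^{k_j-1}\to x^*$, the prox inequality of Lemma~\ref{tool} tested against the limit point to obtain the $\limsup$ bound, lower semicontinuity of $f$ and $g$ for the reverse inequality, and continuity of $H$ to conclude $\Phi(x^{k_j})\to\Phi(x^*)$. You are in fact slightly more complete than the paper, which glosses over the explicit criticality step (your use of Lemma~\ref{plamre} together with the closedness of $\partial\Phi$) and does not spell out the Gauss--Seidel subtlety that the $z$-update is linearized at the already-updated $y^{k_j}$; both of these additions are correct.
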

\begin{proof}
With Lemma \ref{plamlemma1}, $(x^k)_{k\geq 0}$ is bounded. For any $x^*\in\textrm{crit}(\Phi)$, there exists a subsequence $(x^{k_j})_{j\geq 0}$ converges to $x^*$. With Lemmas \ref{points} and \ref{plamlemma1}, we also have
\begin{equation}
    x^{k_j-1}=(y^{k_j-1},z^{k_j-1})\rightarrow x^*=(y^*,z^*).
\end{equation}
And in each iteration of updating $y^{k_j}$, with Lemma \ref{tool}, we have
\begin{eqnarray}
    \gamma f(y^{k_j})&+&\frac{\|y^{k_j-1}-\gamma\nabla_{y}H(y^{k_j-1},z^{k_j-1})+\alpha^{k_j-1}-y^{k_j}\|^2}{2}\nonumber\\
    &\leq& \gamma f(y^{*})+\frac{\|y^{k_j-1}-\gamma\nabla_{y}H(y^{k_j-1},z^{k_j-1})+\alpha^{k_j-1}-y^*\|^2}{2}.
\end{eqnarray}
Taking $j\rightarrow+\infty$, we have
\begin{equation}
    \underset{j\rightarrow+\infty}{\lim\sup}~~f(y^{k_j})\leq f(y^*).
\end{equation}
And recalling the lower semi-continuity of $f$,
\begin{equation}
     f(y^*)\leq \underset{j\rightarrow+\infty}{\lim\inf}~~f(y^{k_j}).
\end{equation}
That means $\lim f(y^{k_j})=f(x^*)$; and similarly, $\lim g(z^{k_j})=g(z^*)$;  combining the continuity of $H$, we then prove the result.
\end{proof}

And then, we then prove the following result.

\begin{proposition}
Suppose that $\Phi$ is   coercive, and condition \eqref{plamcondition}  holds. Functions $f$, $g$ and $H$ are all semi-algebraic. Let the sequence $(x^k)_{k\geq 0}$
be generated by  scheme (\ref{plam}). If the sequence $(\alpha^k,\beta^k)_{k\geq 0}$ satisfies
\begin{equation}
   \|\alpha^l\|+\|\beta^l\|=\mathcal{O}(\frac{1}{k^{\alpha}}),\,\alpha>1.
\end{equation}
Then, the sequence $(x^k)_{k\geq 0}$ has finite length, i.e.
\begin{equation}
\sum_{k=0}^{+\infty}\|x^{k+1}-x^k\|<+\infty.
\end{equation}
And $\{x^k\}_{k=0,1,2,3,\ldots}$ converges to a critical point $x^*$ of $\Phi$.
\end{proposition}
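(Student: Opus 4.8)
The plan is to recognize that the three lemmas already established in this subsection are precisely the three ingredients of condition (\ref{condition}), under the identifications $\omega^k \equiv x^k$, $\tau = 0$, and $\eta_k := \|e^k\| = \|(\alpha^k,\beta^k)\|$, so that the conclusion follows by a direct appeal to Theorem \ref{Th-conver}. First I would fix these identifications and read off the pseudo sufficient descent condition from Lemma \ref{plamlemma1}, which gives $\Phi(x^k)-\Phi(x^{k+1})\geq \nu\|x^{k+1}-x^k\|^2-\sigma\|e^k\|^2$; this is exactly the first line of (\ref{condition}) with $a=\nu$ and $b=\sigma$, since $b\eta_k^2=\sigma\|e^k\|^2$. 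Next, Lemma \ref{plamre} yields $\textrm{dist}(\textbf{0},\partial\Phi(x^{k+1}))\leq S\|x^k-x^{k+1}\|+D\|e^k\|$, which matches the second line of (\ref{condition}) with $\tau=0$, $c=S$, and $d=D$. The third lemma of this subsection supplies the continuity condition. Because $\omega^k\equiv x^k$, the implication (\ref{pointass}) holds trivially.

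Having reduced the problem to the hypotheses of Theorem \ref{Th-conver}, I would then verify the remaining structural requirements. The function $\Phi=f+H+g$ is semi-algebraic as a finite sum of the semi-algebraic functions $f$, $g$, $H$ (using closure of semi-algebraicity under finite sums), and it is coercive by assumption. Finally I would check the noise decay hypothesis (\ref{assass}): from the assumed rate $\|\alpha^k\|+\|\beta^k\|=\mathcal{O}(\frac{1}{k^{\alpha}})$ with $\alpha>1$ together with the elementary bound $\|e^k\|=\sqrt{\|\alpha^k\|^2+\|\beta^k\|^2}\leq \|\alpha^k\|+\|\beta^k\|$, it follows that $\eta_k=\|e^k\|=\mathcal{O}(\frac{1}{k^{\alpha}})$ with the same exponent $\alpha>1$. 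Theorem \ref{Th-conver} then delivers both the finite-length property $\sum_{k}\|x^{k+1}-x^k\|<+\infty$ and convergence of $(x^k)_{k\geq 0}$ to a critical point of $\Phi$.

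The argument is essentially an assembly of the pieces proved earlier in this subsection, so no genuinely hard analytic step remains. The only point requiring care is the bookkeeping that translates the vector-valued noise $e^k=(\alpha^k,\beta^k)$ into the scalar auxiliary sequence $\eta_k$ and confirms that the prescribed decay rate is preserved; I expect this, together with checking that the noise assumption stated through $\|\alpha^k\|+\|\beta^k\|$ indeed controls the $\|e^k\|^2$ term appearing in the descent inequality of Lemma \ref{plamlemma1}, to be the main (mild) obstacle.
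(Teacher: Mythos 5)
Your proposal is correct and follows exactly the route the paper intends: the proposition is stated immediately after Lemmas \ref{plamlemma1}, \ref{plamre} and the continuity lemma precisely so that, with $\omega^k\equiv x^k$, $\tau=0$ and $\eta_k=\|e^k\|$, conditions (\ref{condition}) and (\ref{pointass}) are verified and Theorem \ref{Th-conver} applies (mirroring the proof of Proposition \ref{converpga} in the previous subsection). Your bookkeeping on the noise, namely $\|e^k\|\leq\|\alpha^k\|+\|\beta^k\|=\mathcal{O}(k^{-\alpha})$, which also gives $e^k\rightarrow\mathbf{0}$ as required by the continuity lemma, is the same mild verification the paper leaves implicit.
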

\subsection{Inexact proximal reweighted algorithm}
This part  considers an iteratively reweighted algorithm for a broad class of nonconvex and nonsmooth problems with the following form
\begin{equation}\label{rem}
    \min_{x}\{\Psi(x)=f(x)+\sum_{i=1}^{N}h(g(x_i))\},
\end{equation}
where $x\in \mathbb{R}^N$, and function $f$ has a Lipschitz gradient with constant $L_f$, and
$g:\mathbb{R}\rightarrow \mathbb{R}$ is a lower-semicontinuous convex function, and $h:\textrm{Im}(g)\rightarrow \mathbb{R}$ is a differentiable concave function  with a Lipschitz continuous gradient with constant $L_{h}$, i.e.,
\begin{equation}
    \mid h'(s)-h'(t)\mid\leq L_{h}\mid s-t\mid,
\end{equation}
and $h'(t)>0$ for any $t\in \textrm{Im}(g)$. This model generalizes various problems in the machine learning and signal processing satisfy. The reweighted style algorithms \cite{chartrand2008iteratively,candes2008enhancing,daubechies2010iteratively,lai2013improved,sun2017convergence,lu2017ell,chartrand2016nonconvex} (or also called multi-stage algorithm \cite{zhang2010analysis}) are popular in solving this problem. To make each subproblem easy to be solved. The Proximal Iteratively REweighted (PIRE) algorithm is proposed in \cite{lu2014proximal}. The convergence of PIRE under K{\L} property is proved by \cite{sun2017global}.  We consider the inexact version of PIRE as
\begin{equation}\label{reweight}
    x^{k+1}_i=\textbf{prox}_{\mu w^k_i g}(x^k_i-\mu\nabla_i f(x^k)+e_i^k), i\in[1,2,\ldots,N]
\end{equation}
where $w^k_i:=h'(g(x^k_i))$ and $\mu>0$ is the stepsize, $e^k$ is the noise vector. If $e^k\equiv\textbf{0}$, the algorithm then reduces to PIRE.

\begin{lemma}\label{rewlemma1}
Let $(x^{k})_{k\geq 0}$ be generated by scheme (\ref{reweight}) and $0<\mu<\frac{2}{L_f}$.
Then, we  have
\begin{equation}
    \Psi(x^k)-\Psi(x^{k+1})\geq (\frac{1}{\mu}-\frac{L_{f}}{2})\|x^{k}-x^{k+1}\|^2-\frac{\|e^k\|^2}{\mu(2-\mu L_f)}.
\end{equation}
\end{lemma}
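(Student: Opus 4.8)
Lemma \ref{rewlemma1} is a sufficient-descent inequality for the inexact PIRE iteration \eqref{reweight} applied to $\Psi(x) = f(x) + \sum_i h(g(x_i))$. The structure parallels exactly Lemma \ref{pgalemma1} (the proximal gradient case) and Lemma \ref{plamlemma1} (the PALM case): in each, we combine a descent lemma coming from Lipschitz smoothness of the differentiable part with the prox inequality from Lemma \ref{tool}, and absorb the noise cross-term via Cauchy-Schwarz. The novelty here is the concave composite term $\sum_i h(g(x_i))$, which is linearized through the weights $w_i^k = h'(g(x_i^k))$; so the main extra ingredient is the concavity of $h$. Let me plan accordingly.

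**The plan.** First I would handle the smooth part $f$. Since $\nabla f$ is $L_f$-Lipschitz, the descent lemma gives $f(x^{k+1}) - f(x^k) \leq \langle \nabla f(x^k), x^{k+1}-x^k\rangle + \frac{L_f}{2}\|x^{k+1}-x^k\|^2$. Second, I would exploit the prox step. Since $x_i^{k+1} \in \textbf{prox}_{\mu w_i^k g}(x_i^k - \mu\nabla_i f(x^k) + e_i^k)$, Lemma \ref{tool} with the test point $y = x_i^k$ yields, after expanding the squared norms and summing over $i$, a bound of the form $\sum_i \mu w_i^k [g(x_i^{k+1}) - g(x_i^k)] \leq -\langle \nabla f(x^k), x^{k+1}-x^k\rangle - \frac{1}{\mu}\|x^{k+1}-x^k\|^2 + \frac{1}{\mu}\langle e^k, x^{k+1}-x^k\rangle$ (the $\nabla f$ cross-terms are arranged to cancel against the $f$ inequality, just as in Lemma \ref{pgalemma1}). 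Third — and this is the genuinely new step — I would pass from the weighted increment $\sum_i w_i^k[g(x_i^{k+1})-g(x_i^k)]$ to the actual increment $\sum_i [h(g(x_i^{k+1})) - h(g(x_i^k))]$. Because $h$ is concave and differentiable, $h(t) - h(s) \geq h'(s)(t-s)$ for all $s,t$, i.e. the first-order expansion overestimates from below; applied with $s = g(x_i^k)$, $t = g(x_i^{k+1})$ and using $w_i^k = h'(g(x_i^k))$, this gives $h(g(x_i^{k+1})) - h(g(x_i^k)) \geq w_i^k[g(x_i^{k+1}) - g(x_i^k)]$. Summing over $i$ therefore controls the true objective increment by the weighted surrogate.

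**Assembling the estimate.** Adding the $f$-descent inequality to the weighted-$g$ inequality and then invoking the concavity bound to replace $\sum_i w_i^k[g(x_i^{k+1})-g(x_i^k)]$ by the smaller $\sum_i[h(g(x_i^{k+1}))-h(g(x_i^k))]$, the $\langle\nabla f(x^k), x^{k+1}-x^k\rangle$ terms cancel and I obtain
\begin{equation}
\Psi(x^{k+1}) - \Psi(x^k) \leq \tfrac{1}{2}\Bigl(L_f - \tfrac{2}{\mu}\Bigr)\|x^{k+1}-x^k\|^2 + \tfrac{1}{\mu}\langle e^k, x^{k+1}-x^k\rangle.
\end{equation}
Finally I would split the noise cross-term by Cauchy-Schwarz together with Young's inequality in the form $\frac{1}{\mu}\langle e^k, x^{k+1}-x^k\rangle \leq (\frac{1}{\mu}-\frac{L_f}{2})\|x^{k+1}-x^k\|^2 + \frac{1}{\mu(2-\mu L_f)}\|e^k\|^2$, choosing the Young parameter so that the squared-increment coefficient matches; rearranging produces exactly $\Psi(x^k)-\Psi(x^{k+1}) \geq (\frac{1}{\mu}-\frac{L_f}{2})\|x^k-x^{k+1}\|^2 - \frac{\|e^k\|^2}{\mu(2-\mu L_f)}$, which is the claim. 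The condition $0 < \mu < \frac{2}{L_f}$ guarantees $\frac{1}{\mu}-\frac{L_f}{2} > 0$ and $2-\mu L_f > 0$, so both constants are positive and well-defined.

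**Main obstacle.** The routine parts — the descent lemma, the prox inequality, and the Young's-inequality bookkeeping — are mechanical and mirror the earlier lemmas. The one step requiring care is the concavity argument linearizing $h\circ g$: I must make sure the inequality direction is correct (concavity gives the tangent line \emph{above} the graph, so the first-order term \emph{lower}-bounds the increment, which is exactly what is needed to upper-bound $\Psi(x^{k+1})-\Psi(x^k)$) and that it applies coordinatewise since the sum decouples over $i$. Everything else is a direct adaptation of the proof of Lemma \ref{pgalemma1}.
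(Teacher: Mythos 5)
Your architecture mirrors Lemma \ref{pgalemma1}, but the constant $\frac{1}{\mu}-\frac{L_f}{2}$ and the stepsize range $0<\mu<\frac{2}{L_f}$ cannot be reached by that route, and this is exactly where your proposal breaks. The plain prox inequality (Lemma \ref{tool}) with test point $y=x_i^k$ gives, after expanding and dividing by $\mu$, the coefficient $-\frac{1}{2\mu}\|x^{k+1}-x^k\|^2$, not $-\frac{1}{\mu}\|x^{k+1}-x^k\|^2$ as you wrote: the quadratic in the prox carries a factor $\frac{1}{2}$. Hence your intermediate bound $\Psi(x^{k+1})-\Psi(x^k)\leq\frac{1}{2}(L_f-\frac{2}{\mu})\|x^{k+1}-x^k\|^2+\frac{1}{\mu}\langle e^k,x^{k+1}-x^k\rangle$ does not follow from your steps; what follows has quadratic coefficient $\frac{L_f}{2}-\frac{1}{2\mu}$, which is already nonnegative once $\mu\geq\frac{1}{L_f}$, so for $\frac{1}{L_f}\leq\mu<\frac{2}{L_f}$ no choice of Young parameter can recover a positive multiple of $\|x^{k+1}-x^k\|^2$. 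The missing ingredient is the convexity of $g$, which is assumed in this subsection and which you never invoke. The paper's proof does not use Lemma \ref{tool} at all: it writes the first-order (K.K.T.) condition of the prox subproblem, $\nabla_i f(x^k)+w_i^k v_i^{k+1}+\frac{x_i^{k+1}-x_i^k}{\mu}-\frac{e_i^k}{\mu}=\textbf{0}$ with $v_i^{k+1}\in\partial g(x_i^{k+1})$, and then the subgradient inequality $g(x_i^k)-g(x_i^{k+1})\geq\langle v_i^{k+1},x_i^k-x_i^{k+1}\rangle$ (valid because $g$ is convex and $w_i^k>0$); substituting the K.K.T. expression for $w_i^k v_i^{k+1}$ produces the full $+\frac{1}{\mu}\|x^k-x^{k+1}\|^2$. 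Equivalently, you could repair your route by noting that for convex $g$ the prox objective is $1$-strongly convex, so Lemma \ref{tool} holds with an extra $-\frac{1}{2}\|y-z\|^2$ on the right-hand side, which supplies precisely the missing $\frac{1}{2\mu}\|x^{k+1}-x^k\|^2$. Either way, convexity of $g$ must enter; without it the factor of two, and hence the stepsize range, is lost.

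Two smaller defects. First, your concavity statement is written backwards: for concave differentiable $h$ the tangent lies above the graph, so $h(t)-h(s)\leq h'(s)(t-s)$; your displayed ``$h(t)-h(s)\geq h'(s)(t-s)$'' is the convex direction (your assembly paragraph then uses the correct $\leq$ direction, so the idea is right, but as written the two statements contradict each other). Second, your final Young step chooses the parameter so that the absorbed term ``matches'' the full coefficient $\frac{1}{\mu}-\frac{L_f}{2}$; plugged into your own intermediate inequality this cancels the quadratic term entirely and leaves only $\Psi(x^k)-\Psi(x^{k+1})\geq-\frac{\|e^k\|^2}{\mu(2-\mu L_f)}$, not the claimed bound. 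One must sacrifice only a fraction of the descent term, as the paper does: it absorbs half, ending with $\frac{1}{2}(\frac{1}{\mu}-\frac{L_f}{2})\|x^k-x^{k+1}\|^2-\frac{\|e^k\|^2}{\mu(2-\mu L_f)}$, which is in fact half the constant announced in the lemma statement --- a discrepancy internal to the paper, though harmless downstream since only positivity of the constants is ever used.
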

\begin{proof}
We can easily obtain that
\begin{eqnarray}\label{rewlema1temp1}
\Psi(x^k)-\Psi(x^{k+1})&=&f(x^k)-f(x^{k+1})+\sum_{i=1}^N h(g(x^k_i))-h(g(x^{k+1}_i))\nonumber\\
&\geq& \langle \nabla f(x^{k}),x^k-x^{k+1}\rangle-\frac{L_f}{2}\|x^k-x^{k+1}\|_2^2+\sum_{i=1}^N h(g(x^k_i))-h(g(x^{k+1}_i))\nonumber\\
&\geq&\sum_{i=1}^N \langle \nabla_i f(x^{k}),x^k_i-x^{k+1}_i\rangle-\frac{L_f}{2}\|x^k-x^{k+1}\|^2\nonumber\\
&+&\sum_{i=1}^N w^k_i(g(x^k_i)-g(x^{k+1}_i)).
\end{eqnarray}
Note that $x^{k+1}_{i}$ is obtained by (\ref{reweight}); the K.K.T condition gives
\begin{equation}\label{rewlema1temp2}
    \nabla_{i} f(x^k)+w^{k}_{i}v^{k+1}_{i}+\frac{(x^{k+1}_{i}-x^{k}_{i})}{\mu}-\frac{e^k_i}{\mu}=\textbf{0},
\end{equation}
where $v^{k+1}_{i}\in \partial g(x^{k+1}_i)$.
 Noting that $g$ is convex and $w^k_i>0$,  we have
\begin{equation}\label{rewlema1temp3}
    \sum_{i=1}^N w^k_i(g(x^k_i)-g(x^{k+1}_i))\geq \sum_{i=1}^N \langle w^{k}_{i}v^{k+1}_{i},x^k_{i}-x^{k+1}_{i}\rangle.
\end{equation}
Substituting (\ref{rewlema1temp2}) and (\ref{rewlema1temp3}) into (\ref{rewlema1temp1}), we derive that
\begin{eqnarray}\label{rewlema1temp4}
\Psi(x^k)-\Psi(x^{k+1})&\geq&(\frac{1}{\mu}-\frac{L_f}{2})\|x^k-x^{k+1}\|^2+\frac{\langle e^k,x^k-x^{k+1}\rangle}{\mu}\nonumber\\
&\geq&\frac{1}{2}(\frac{1}{\mu}-\frac{L_f}{2})\|x^k-x^{k+1}\|^2-\frac{\|e^k\|_2^2}{\mu(2-\mu L_f)},
\end{eqnarray}
where we use the  inequality $\langle e^k,x^k-x^{k+1}\rangle\geq -\frac{1}{2}(1-\frac{\mu L_f}{2})\|x^k-x^{k+1}\|^2-\frac{\|e^k\|_2^2}{2-\mu L_f}$.

\end{proof}

\begin{lemma}\label{rewlemma2}
Let $(x^{k})_{k\geq 0}$ be generated by scheme (\ref{reweight}) and $0<\mu<\frac{2}{L_f}$, and function $\Phi$ be coercive. Then, there exist $S,D>0$ such that
\begin{equation}
  \emph{dist}(\textbf{0}, \partial\Psi(x^{k+1}))\leq S\|x^{k+1}-x^{k}\|+D\|e^k\|.
\end{equation}
\end{lemma}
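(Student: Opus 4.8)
The plan is to produce, exactly as in the proofs of Lemmas \ref{proxerrorbound} and \ref{plamre}, an explicit element of $\partial\Psi(x^{k+1})$ and then bound its norm. By the chain rule for the composition of the $C^1$ function $h$ with the convex function $g$ (valid because $h'>0$), one has $\partial_i\Psi(x^{k+1})=\nabla_i f(x^{k+1})+w^{k+1}_i\,\partial g(x^{k+1}_i)$ with $w^{k+1}_i:=h'(g(x^{k+1}_i))$. Since the multiplier $v^{k+1}_i\in\partial g(x^{k+1}_i)$ already appears in the K.K.T. relation \eqref{rewlema1temp2}, the vector whose $i$-th block is $\nabla_i f(x^{k+1})+w^{k+1}_i v^{k+1}_i$ lies in $\partial\Psi(x^{k+1})$, so it suffices to estimate its Euclidean norm.

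First I would rewrite each component by inserting the weight $w^{k}_i$ that was actually used in the update:
\begin{align*}
\nabla_i f(x^{k+1})+w^{k+1}_i v^{k+1}_i
&=\big[\nabla_i f(x^{k+1})-\nabla_i f(x^{k})\big]
+\big[\nabla_i f(x^{k})+w^{k}_i v^{k+1}_i\big]
+\big[(w^{k+1}_i-w^{k}_i)v^{k+1}_i\big].
\end{align*}
By \eqref{rewlema1temp2} the middle bracket equals $-\tfrac{1}{\mu}(x^{k+1}_i-x^k_i)+\tfrac{1}{\mu}e^k_i$, so after summing over $i$ it contributes $\tfrac1\mu\|x^{k+1}-x^k\|$ and $\tfrac1\mu\|e^k\|$; the first bracket is controlled by the $L_f$-Lipschitz continuity of $\nabla f$, giving $L_f\|x^{k+1}-x^k\|$.

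The main obstacle is the last bracket $(w^{k+1}_i-w^{k}_i)v^{k+1}_i$, which is genuinely new compared to the non-reweighted lemmas: it records that the weights in $\partial\Psi(x^{k+1})$ are evaluated at $x^{k+1}$ whereas the iteration froze them at $x^k$. To handle it I would first invoke coercivity of $\Psi$ together with the descent estimate of Lemma \ref{rewlemma1} (as in Lemma \ref{descend}) to confine $(x^k)_{k\ge0}$ to a bounded set $\mathcal{B}$. On $\mathcal{B}$ the real-valued convex function $g$ is Lipschitz with some constant $L_g$ and its subgradients are uniformly bounded, say $|v^{k+1}_i|\le B$; combining this with the $L_h$-Lipschitz continuity of $h'$ yields
\begin{equation*}
|w^{k+1}_i-w^{k}_i|=|h'(g(x^{k+1}_i))-h'(g(x^{k}_i))|\le L_h L_g\,|x^{k+1}_i-x^{k}_i|,
\end{equation*}
whence $\big(\sum_i|(w^{k+1}_i-w^{k}_i)v^{k+1}_i|^2\big)^{1/2}\le L_hL_gB\,\|x^{k+1}-x^k\|$.

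Collecting the four contributions through the triangle inequality then gives $\textrm{dist}(\textbf{0},\partial\Psi(x^{k+1}))\le S\|x^{k+1}-x^k\|+D\|e^k\|$ with $S=L_f+\tfrac1\mu+L_hL_gB$ and $D=\tfrac1\mu$. I expect the only delicate point to be justifying the uniform bounds $L_g$ and $B$ on $\mathcal{B}$; once boundedness of the iterates is secured, both follow from the standard local-boundedness of finite convex functions and of their subdifferentials, and the rest is routine.
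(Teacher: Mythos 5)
Your proof is correct and follows the same core strategy as the paper's: exhibit the element $\nabla_i f(x^{k+1})+w^{k+1}_i v^{k+1}_i$ of $\partial_i\Psi(x^{k+1})$, use the K.K.T.\ relation \eqref{rewlema1temp2} to trade $v^{k+1}_i$ for $\tfrac{1}{\mu}(x^k_i-x^{k+1}_i)+\tfrac{1}{\mu}e^k_i$ up to gradient terms, and control the weight-change term $w^{k+1}_i-w^k_i$ by combining boundedness of the iterates (coercivity plus the descent of Lemma \ref{rewlemma1}), the local Lipschitz property of the convex function $g$ (the paper's $d_g$, your $L_g$, both via Theorem 10.4 of Rockafellar or its equivalent), and the $L_h$-Lipschitz continuity of $h'$. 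The bookkeeping differs in one respect worth noting: the paper solves \eqref{rewlema1temp2} for $v^{k+1}_i$ and substitutes it wholesale, which produces the ratios $\tfrac{w^{k+1}_i}{w^k_i}$ in \eqref{rewlemma2t0} and therefore forces it to establish the two-sided bound $\delta\leq w^k_i\leq\pi$ before it can estimate anything; your additive splitting keeps $v^{k+1}_i$ in place and instead requires only a uniform bound $B$ on the subgradients of $g$ over the bounded set of iterates (which, as you say, is standard local boundedness of the subdifferential of a finite convex function). Your route thus never divides by $w^k_i$, needs no lower bound on the weights, and—incidentally—keeps the mismatch between $\nabla_i f(x^k)$ and $\nabla_i f(x^{k+1})$ explicit as a clean $L_f\|x^{k+1}-x^k\|$ term, whereas the paper's displayed inclusion \eqref{rewlemma2t0} silently conflates the two gradients (a harmless slip, since that difference is Lipschitz-bounded anyway, but your version avoids it). The constants you obtain, $S=L_f+\tfrac1\mu+L_hL_gB$ and $D=\tfrac1\mu$, are of the same nature as the paper's; both arguments stand on the same three pillars, so the difference is one of cleaner bookkeeping rather than of substance.
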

\begin{proof}
We can easily have
\begin{equation}\label{rewlemma2t1}
    \nabla f(x^{k+1})+W^{k+1}v^{k+1}\in\partial \Psi(x^{k+1}),
\end{equation}
where $v^{k+1}_i\in \partial g(x^{k+1}_i)$ and $W^{k+1}=\textrm{diag}(h'(g(x^k_1)),h'(g(x^k_2)),\ldots,h'(g(x^k_N)))$. Recall relation (\ref{rewlema1temp2}), we have
\begin{equation}\label{rewlemma2t2}
  v^{k+1}_{i}=[\frac{e^k_i}{\mu}+\frac{(x^{k}_{i}-x^{k+1}_{i})}{\mu}-\nabla_{i} f(x^k)]/w^{k}_{i}, i \in[1,2,\ldots,N].
\end{equation}
Combining (\ref{rewlemma2t1}) and (\ref{rewlemma2t2}), we have
\begin{eqnarray}\label{rewlemma2t0}
    \frac{w^{k+1}_{i}-w^{k}_{i}}{w^{k}_{i}}\nabla_i f(x^{k+1})+\frac{w^{k+1}_{i}}{w^{k}_{i}}[\frac{e^k_i}{\mu}+\frac{(x^{k}_{i}-x^{k+1}_{i})}{\mu}]\in \partial_i \Psi(x^{k+1}), i \in[1,2,\ldots,N].
\end{eqnarray}
Due to that $\nabla f$ is continuous, so is $\nabla_{i} f(x)$; and from Lemmas \ref{rewlemma1} and \ref{points}, $\{x^k\}_{k=0,1,2,\ldots}$ is bounded. Hence, there exist $\widetilde{L}>0$ such that
\begin{equation}
    \max_{1\leq i\leq N} \|\nabla_i f(x^k)\|\leq \widetilde{L},\,~\forall~~k.
\end{equation}
Considering that $h'$ is nonzero and continuous, and $\{g(x^k_i)\}_{k=0,1,2,\ldots}$ is bounded ($i \in[1,2,\ldots,N]$).
 Therefore, for any $k$ and $i\in\{1,2,\ldots,N\}$, there exists $\delta,\pi>0$ such that
$$\delta\leq h'(g(x^k_i))=w^{k}_{i}\leq \pi.$$
With [Theorem 10.4, \cite{rockafellar2015convex}] and the convexity of $g$, there exists $d_g$
\begin{equation}
    |g(x^{k}_i)-g(x^{k+1}_i)|\leq d_g|x^{k}_i-x^{k+1}_i|.
\end{equation}
Hence, we derive
\begin{equation}\label{rewlemma2t3}
    \max_{1\leq i\leq N} \mid\frac{w^{k+1}_{i}}{w^{k}_{i}}\mid\leq\frac{\pi}{\delta}, \max_{1\leq i\leq N} \mid\frac{1}{w^{k}_{i}}\mid\leq\frac{1}{\delta}.
\end{equation}
From  (\ref{rewlemma2t0}), with (\ref{rewlemma2t3}), we have
\begin{eqnarray}\label{rewlemma2t4}
  \textrm{dist}(\textbf{0},\partial\Psi(x^{k+1}))&\leq&\sum_{i=1}^N \left|\frac{w^{k+1}_{i}-w^{k}_{i}}{w^{k}_{i}}\nabla_i f(x^{k+1})+\frac{w^{k+1}_{i}}{w^{k}_{i}}[\frac{e^k_i}{\mu}+\frac{(x^{k}_{i}-x^{k+1}_{i})}{\mu}]\right|\nonumber\\
  &\leq&\frac{\tilde{L}}{\delta}\sum_{i=1}^N|w^{k+1}_{i}-w^{k}_{i}|+\frac{\pi\sqrt{N}}{\mu\delta}\|e^k\|+\frac{\pi\sqrt{N}}{\mu\delta}\|x^{k+1}-x^k\|.
\end{eqnarray}
The problem also turns to estimating $|w^{k+1}_{i}-w^{k}_{i}|$. For any $i\in [1,2,\ldots,N]$,
\begin{eqnarray}\label{rewlemma2t5}
|w^{k}_{i}-w^{k+1}_{i}|&=&|h'(g(x^k_i))-h'(g(x^{k+1}_i))|\nonumber\\
&\leq& L_h |g(x^k_i)-g(x^{k+1}_i)|=L_h d_g |x^{k+1}_i-x^k_i|.
\end{eqnarray}
Combining (\ref{rewlemma2t4}) and (\ref{rewlemma2t5}), we obtain
\begin{equation}
  \textrm{dist}(\textbf{0},\partial\Psi(x^{k+1}))\leq \left(\frac{\tilde{L}L_h d_g \sqrt{N}}{\delta}+\frac{\pi\sqrt{N}}{\mu\delta}\right)\|x^{k+1}-x^k\|+\frac{\pi\sqrt{N}}{\mu\delta}\|e^k\|.
\end{equation}

\end{proof}

\begin{lemma}
Let $(x^{k})_{k\geq 0}$ be generated by scheme (\ref{reweight}), and function $\Psi$ be coercive. Then, for any $x^*$ being the stationary point of $(x^{k})_{k\geq 0}$, there exists a subsequence $(x^{k_j})_{j\geq 0}$ convergent to $x^*$ satisfying $\Psi(x^{k_j})\rightarrow \Psi(x^*)$ and $x^*\in \emph{crit}(\Psi)$.
\end{lemma}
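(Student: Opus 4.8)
The plan is to mirror the continuity argument already carried out for the inexact proximal gradient scheme, adapting it to the coordinatewise reweighting $w^k_i=h'(g(x^k_i))$, which is the only genuinely new feature here. Throughout I would keep in force the stepsize restriction $0<\mu<\frac{2}{L_f}$ and the decay $e^k\rightarrow\mathbf 0$ that are used in the preceding lemmas (and in the concluding proposition), so that Lemma~\ref{rewlemma1} supplies a sufficient descent and, together with Lemma~\ref{points}, the boundedness of $(x^k)_{k\ge0}$ and $\|x^{k+1}-x^k\|\rightarrow0$.

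First I would fix a cluster point $x^*$ of $(x^k)_{k\ge0}$ and a subsequence $x^{k_j}\rightarrow x^*$. Since $\|x^{k_j}-x^{k_j-1}\|\rightarrow0$, the predecessors satisfy $x^{k_j-1}\rightarrow x^*$ as well. Because $g$ is finite-valued convex (hence continuous) and $h'$ is continuous and strictly positive, the weights converge coordinatewise: $w^{k_j-1}_i=h'(g(x^{k_j-1}_i))\rightarrow w^*_i:=h'(g(x^*_i))>0$ for each $i$.

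The heart of the proof is the upper semicontinuity estimate for $g$. I would invoke the proximal inequality of Lemma~\ref{tool} for the $i$-th coordinate update in (\ref{reweight}), comparing the produced point $x^{k_j}_i$ against the test point $x^*_i$:
\begin{equation}
\mu w^{k_j-1}_i g(x^{k_j}_i)+\tfrac12\big\|x^{k_j}_i-u^{k_j-1}_i\big\|^2\le \mu w^{k_j-1}_i g(x^*_i)+\tfrac12\big\|x^*_i-u^{k_j-1}_i\big\|^2,
\end{equation}
where $u^{k_j-1}_i:=x^{k_j-1}_i-\mu\nabla_i f(x^{k_j-1})+e^{k_j-1}_i$. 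As $j\rightarrow\infty$ one has $u^{k_j-1}_i\rightarrow x^*_i-\mu\nabla_i f(x^*)$ by continuity of $\nabla f$ and $e^k\rightarrow\mathbf 0$, while $x^{k_j}_i\rightarrow x^*_i$; hence the two quadratic terms share a common limit and cancel. Passing to the $\limsup$ and dividing by the positive limit $\mu w^*_i$ then gives $\limsup_j g(x^{k_j}_i)\le g(x^*_i)$. Combined with the lower semicontinuity of $g$, this forces $\lim_j g(x^{k_j}_i)=g(x^*_i)$ for every $i$; summing over $i$ and using the continuity of $h$ and of $f$ yields $\Psi(x^{k_j})\rightarrow\Psi(x^*)$.

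Finally, applying the relative-error bound of Lemma~\ref{rewlemma2} along the subsequence gives $\textrm{dist}(\mathbf 0,\partial\Psi(x^{k_j}))\le S\|x^{k_j}-x^{k_j-1}\|+D\|e^{k_j-1}\|\rightarrow0$; since $x^{k_j}\rightarrow x^*$ and $\Psi(x^{k_j})\rightarrow\Psi(x^*)$, closedness of $\partial\Psi$ yields $\mathbf 0\in\partial\Psi(x^*)$, i.e. $x^*\in\textrm{crit}(\Psi)$. The step I expect to require the most care is the limit of the weighted proximal inequality: unlike the plain proximal-gradient case, the coefficient $w^{k_j-1}_i$ is iteration-dependent, so I must confirm both that it converges to a strictly positive limit (which uses $h'>0$ together with the boundedness of $g(x^k_i)$) and that this convergence legitimately lets me extract $\limsup_j g(x^{k_j}_i)$ from $\limsup_j w^{k_j-1}_i g(x^{k_j}_i)$.
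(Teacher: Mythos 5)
Your proof is correct, but it is considerably heavier than the paper's, which consists of a single line: ``The continuity of the function $\Psi$ directly gives the result.'' The point is that in this reweighted setting the standing assumptions already force $\Psi$ to be \emph{continuous}: $f$ is differentiable, $h$ is differentiable, and $g:\mathbb{R}\rightarrow\mathbb{R}$ is a finite-valued convex function on the whole line, hence continuous. So once $x^{k_j}\rightarrow x^*$, the convergence $\Psi(x^{k_j})\rightarrow\Psi(x^*)$ is immediate, with no need for the proximal inequality, the cancellation of quadratic terms, or the $\limsup$/$\liminf$ sandwich; the critical-point claim then follows, as you say, from Lemma \ref{rewlemma2} and the closedness of $\partial\Psi$. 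Your argument transplants the machinery from the inexact proximal gradient case (Subsection 4.1), where it is genuinely needed because there $g$ is only assumed closed and may be discontinuous; here it is overkill. Notice in fact that your own proof already concedes the shortcut: to get convergence of the weights $w^{k_j-1}_i$ you invoke the continuity of $g$, and at that moment $g(x^{k_j}_i)\rightarrow g(x^*_i)$ follows directly, making the subsequent weighted-prox $\limsup$ extraction (the step you flag as the delicate one) redundant. What your route buys is generality: it would survive if $g$ were replaced by a merely lower semicontinuous extended-valued function, which the paper's one-line proof would not. What the paper's route buys is brevity and transparency about which hypothesis does the work. One further remark: the stepsize bound $0<\mu<\frac{2}{L_f}$ and $e^k\rightarrow\mathbf{0}$ that you keep in force are indeed needed (for $\|x^{k+1}-x^k\|\rightarrow 0$ and for the relative-error bound to vanish along the subsequence), and the paper's lemma statement omits them; making them explicit, as you do, is a genuine improvement over the paper's formulation.
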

\begin{proof}
The continuity of the function $\Psi$ directly gives the result.
\end{proof}

\begin{proposition}
Suppose that $f$, $g$, $h$ are all semi-algebraic,   and $\Psi$ is  coercive, and $0<\mu<\frac{2}{L_f}$. Let the sequence $(x^k)_{k\geq 0}$
be generated by  scheme (\ref{reweight}). If the sequence $(e^k)_{k\geq 0}$ satisfies
\begin{equation}
   \|e^k\|=\mathcal{O}(\frac{1}{k^{\alpha}}),\,\alpha>1.
\end{equation}
Then, the sequence $(x^k)_{k\geq 0}$ has finite length, i.e.
\begin{equation}
\sum_{k=0}^{+\infty}\|x^{k+1}-x^k\|<+\infty.
\end{equation}
And $\{x^k\}_{k=0,1,2,3,\ldots}$ converges to a critical point $x^*$ of $\Psi$.
\end{proposition}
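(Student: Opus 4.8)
The plan is to verify that the inexact PIRE scheme (\ref{reweight}) falls within the abstract framework of Theorem \ref{Th-conver}, so that the two conclusions follow by a direct application of that theorem. Concretely, I would take $\omega^k\equiv x^k$ and $\tau=0$, and choose the auxiliary noise sequence $\eta_k:=\|e^k\|$. With these identifications, the three structural requirements in (\ref{condition}) are exactly the content of the three lemmas already established in this subsection: the \emph{pseudo sufficient descent condition} is Lemma \ref{rewlemma1}, with $a=\frac{1}{2}(\frac{1}{\mu}-\frac{L_f}{2})$ and $b=\frac{1}{\mu(2-\mu L_f)}$, both positive because $0<\mu<\frac{2}{L_f}$; the \emph{pseudo relative error condition} is Lemma \ref{rewlemma2}, with $c=S$ and $d=D$; and the continuity condition is precisely the third lemma of this subsection, which guarantees that along a suitable convergent subsequence $\Psi(x^{k_j})\to\Psi(x^*)$ with $x^*\in\textrm{crit}(\Psi)$.

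Before invoking these lemmas I would first record that $\Psi$ is a K{\L} function. Since $f$, $g$, $h$ are semi-algebraic by hypothesis, and finite sums and compositions of semi-algebraic functions remain semi-algebraic, the map $\Psi(x)=f(x)+\sum_{i=1}^N h(g(x_i))$ is semi-algebraic; being also closed, it satisfies the K{\L} property at every point of $\textrm{dom}(\partial\Psi)$ by Lemma \ref{semikl}. Coercivity of $\Psi$ is assumed, which is exactly what Theorem \ref{Th-conver} needs to secure boundedness of the generated sequence through Lemma \ref{points}. I would then observe that the point-composition hypothesis (\ref{pointass}) holds trivially here: because $\omega^k\equiv x^k$, the implication $\sum_k\|\omega^k-\omega^{k+1}\|<+\infty\Rightarrow\sum_k\|x^k-x^{k+1}\|<+\infty$ is an identity.

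It then remains to match the decay requirement on the noise. From the hypothesis $\|e^k\|=\mathcal{O}(1/k^\alpha)$ with $\alpha>1$ we obtain immediately $\eta_k=\|e^k\|=\mathcal{O}(1/k^\alpha)$, which is precisely assumption (\ref{assass}) of Theorem \ref{Th-conver}; in particular $\eta_k\to 0$, hence $e^k\to\textbf{0}$, so the auxiliary hypothesis $e^k\to\textbf{0}$ used by the continuity lemma is automatically fulfilled. With every hypothesis of Theorem \ref{Th-conver} checked, the theorem yields both the finite length $\sum_{k=0}^{+\infty}\|x^{k+1}-x^k\|<+\infty$ and the convergence of $(x^k)_{k\geq 0}$ to a single limit point, which is a critical point of $\Psi$ by Lemma \ref{points}.

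There is no genuine analytic obstacle remaining at the level of this proposition: the entire difficulty has already been absorbed into Theorem \ref{Th-conver}, whose proof constructs the Lyapunov function $\xi$ and controls the auxiliary coordinate $t_k$, and into the three subsection lemmas, whose proofs extract the descent and subdifferential bounds in the presence of the reweighting factors $w^k_i$. The only points demanding care are the correct matching of the constants $a,b,c,d$ with the algorithm's parameters $\mu,L_f,S,D$, and confirming that the reweighting structure of (\ref{rem}) does not break the semi-algebraicity of $\Psi$; both checks are routine, so the proof reduces to a short verification followed by citation of Theorem \ref{Th-conver}.
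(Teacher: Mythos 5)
Your proposal is correct and follows exactly the paper's intended argument: the paper's (very terse) proof of this proposition is precisely to invoke the three lemmas of the subsection as instances of the pseudo sufficient descent, pseudo relative error, and continuity conditions in (\ref{condition}), note that (\ref{pointass}) is trivial since $\omega^k\equiv x^k$, observe semi-algebraicity of $\Psi$, and cite Theorem \ref{Th-conver}. Your additional bookkeeping (identifying $\eta_k=\|e^k\|$, $\tau=0$, and the constants $a,b,c,d$) only makes explicit what the paper leaves implicit.
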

\subsection{Inexact DC algorithm}
In this part, we consider nonconvex optimization problems of the following type
\begin{equation}\label{DC}
\min \{\Xi(x) = f(x) + g(x) - h(x)\},
\end{equation}
where $g$ is proper and lower semicontinuous, $f$ is differentiable with $L_f$-Lipschitz gradient, and $h$ is convex and differentiable  with $L_h$-Lipschitz gradient. Such a problem is discussed in \cite{mainge2008convergence}. If $f$ vanishes,
problem (\ref{DC}) will reduce to the DC programming  \cite{tao1997convex}
\begin{equation}
\min\{g(x)-h(x)\}.
\end{equation}
A novel DC algorithm is proposed in \cite{an2017convergence} for (\ref{DC}) and the convergence is also proved.
The inexact version of this algorithm can be expressed as
\begin{equation}\label{indc}
x^{k+1}= \textbf{prox}_{\gamma g}\left(x^k - \gamma(\nabla f(x^k) -\nabla h(x^k))+e^k\right),
\end{equation}
where $\gamma$ is the stepsize, and $e^k$ is the noise. The cautious reader may find that iteration (\ref{indc}) is actually a special case of (\ref{pga}) if regarding $f-h$ as a whole. But with the specific structure, iteration (\ref{indc}) enjoys more properties than (\ref{pga}), like larger stepsize. It is easy to see that $\nabla(f-h)=\nabla f-\nabla h$ is Lipchitz with $L_f+L_h$. If directly using the convergence results for (\ref{pga}) (Theorem \ref{converpga}), the stepsize $\gamma$ shall satisfy $\gamma<\frac{1}{L_f+L_h}$. However, a larger step can be selected for iteration (\ref{indc}); the stepsize can be $0<\gamma<\frac{2}{L_f}$ (Lemma \ref{dclemma1}).
\begin{lemma}\label{dclemma1}
Let $(x^{k})_{k\geq 0}$ be generated by scheme (\ref{indc}) and $0<\gamma<\frac{2}{L_f}$.
Then, we   have
\begin{equation}
    \Xi(x^k)-\Xi(x^{k+1})\geq (\frac{1}{\gamma}-\frac{L_{f}}{2})\|x^{k}-x^{k+1}\|^2-\frac{\|e^k\|^2}{\gamma(2-\gamma L_f)}.
\end{equation}
\end{lemma}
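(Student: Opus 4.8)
The target inequality is the \emph{pseudo sufficient descent condition} of \eqref{condition} for the DC iteration, so the plan is to quantify the decrease of $\Xi=(f-h)+g$ along the iterates modulo an $\mathcal{O}(\|e^k\|^2)$ error, treating $f-h$ as the smooth part and $g$ as the proximal part. First I would bound the smooth part. The descent lemma applied to $f$ gives $f(x^{k+1})\le f(x^k)+\langle\nabla f(x^k),x^{k+1}-x^k\rangle+\tfrac{L_f}{2}\|x^{k+1}-x^k\|^2$, while the convexity of $h$ supplies the tangent lower bound $h(x^{k+1})\ge h(x^k)+\langle\nabla h(x^k),x^{k+1}-x^k\rangle$. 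Subtracting produces a one-sided descent inequality for $f-h$ with modulus $L_f$ rather than $L_f+L_h$:
\[
(f-h)(x^{k+1})\le (f-h)(x^k)+\langle\nabla f(x^k)-\nabla h(x^k),x^{k+1}-x^k\rangle+\tfrac{L_f}{2}\|x^{k+1}-x^k\|^2 .
\]
This is exactly the structural saving announced just before the lemma; it is what lets the admissible range widen from $1/(L_f+L_h)$ to $0<\gamma<2/L_f$.

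Next I would process the proximal part. Since $x^{k+1}\in\textbf{prox}_{\gamma g}\big(x^k-\gamma(\nabla f(x^k)-\nabla h(x^k))+e^k\big)$, I set $u^k:=x^k-\gamma(\nabla f(x^k)-\nabla h(x^k))+e^k$ and use the optimality of the proximal update. Relating $x^{k+1}$ to the competitor $y=x^k$ and expanding the squares turns the proximal inequality into a bound for $g(x^{k+1})-g(x^k)$ in terms of $\langle\nabla f(x^k)-\nabla h(x^k),x^{k+1}-x^k\rangle$, a multiple of $\|x^{k+1}-x^k\|^2$, and the error pairing $\tfrac1\gamma\langle e^k,x^{k+1}-x^k\rangle$. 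Adding this to the smooth bound of the first step, the two occurrences of $\langle\nabla f(x^k)-\nabla h(x^k),x^{k+1}-x^k\rangle$ cancel, leaving an inequality of the form
\[
\Xi(x^k)-\Xi(x^{k+1})\ \ge\ \Big(\tfrac1\gamma-\tfrac{L_f}{2}\Big)\|x^{k+1}-x^k\|^2+\tfrac1\gamma\langle e^k,x^k-x^{k+1}\rangle .
\]

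Finally I would absorb the cross term with the noise. Young's inequality $\langle e^k,x^k-x^{k+1}\rangle\ge-\tfrac{t}{2}\|x^k-x^{k+1}\|^2-\tfrac{1}{2t}\|e^k\|^2$ with the tuned parameter $t=1-\tfrac{\gamma L_f}{2}$ keeps the residual quadratic coefficient nonnegative (for $\gamma<2/L_f$) and collapses the $\|e^k\|^2$ multiplier to exactly $\tfrac{1}{\gamma(2-\gamma L_f)}$, which is the claimed error term; this mirrors the error-splitting used in Lemma \ref{rewlemma1}.

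The main obstacle is the coefficient bookkeeping in the proximal step: the naive use of the proximal descent property (Lemma \ref{tool}) alone supplies only a $\tfrac{1}{2\gamma}\|x^{k+1}-x^k\|^2$ term and would cap the stepsize at $1/L_f$, whereas reaching the full $\tfrac1\gamma$ coefficient and the threshold $2/L_f$ requires invoking the first-order optimality relation $\tfrac1\gamma(x^k-x^{k+1})+\tfrac1\gamma e^k-(\nabla f(x^k)-\nabla h(x^k))\in\partial g(x^{k+1})$ and pairing it against $x^k-x^{k+1}$, so that the self-inner-product $\langle\tfrac1\gamma(x^k-x^{k+1}),x^k-x^{k+1}\rangle$ delivers the full quadratic. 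Extracting this term with the right constant, and checking that the pairing is legitimate under the standing hypotheses on $g$, is the delicate point, exactly as in the reweighted lemma; everything else is routine algebra together with the Cauchy--Schwarz/Young estimate.
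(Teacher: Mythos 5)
Your diagnosis of the coefficient issue is exactly right, and it is worth stating plainly: the paper's own proof combines the descent lemma for $f$, the tangent inequality for convex $h$, and the prox-descent property (Lemma \ref{tool}), and an honest expansion of that combination yields only
\[
\Xi(x^k)-\Xi(x^{k+1})\ \ge\ \Big(\tfrac{1}{2\gamma}-\tfrac{L_f}{2}\Big)\|x^k-x^{k+1}\|^2+\tfrac{1}{\gamma}\langle e^k,x^k-x^{k+1}\rangle ,
\]
whereas the paper writes $\tfrac{1}{\gamma}-\tfrac{L_f}{2}$ at that step; so the published argument has precisely the factor-of-two gap you identified, and positivity of the smaller coefficient would indeed cap the stepsize at $1/L_f$. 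Your proposed repair, pairing the optimality relation $\tfrac1\gamma(x^k-x^{k+1})+\tfrac1\gamma e^k-(\nabla f(x^k)-\nabla h(x^k))\in\partial g(x^{k+1})$ against $x^k-x^{k+1}$, is exactly the route the paper itself takes in the reweighted case (Lemma \ref{rewlemma1}), and it does deliver the full $\tfrac1\gamma$ there.

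However, this is where your proof has a genuine gap that you flagged but did not close: converting that pairing into $g(x^k)-g(x^{k+1})\ge\langle v^{k+1},x^k-x^{k+1}\rangle$ with $v^{k+1}\in\partial g(x^{k+1})$ is the subgradient inequality, which is valid only when $g$ is convex. In the reweighted section $g$ is assumed convex, so the argument goes through; but in the DC section, problem \eqref{DC} assumes $g$ is merely proper and lower semicontinuous, so under the stated hypotheses of Lemma \ref{dclemma1} the pairing is not legitimate, and with only Lemma \ref{tool} available one cannot do better than $\tfrac{1}{2\gamma}$. The lemma as stated therefore needs either an added convexity assumption on $g$ (consistent with the DC setting of \cite{an2017convergence}) or a weakened constant with stepsize range $\gamma<1/L_f$. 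One further bookkeeping point: even granting the full $\tfrac1\gamma$, the Young step with $t=1-\tfrac{\gamma L_f}{2}$ leaves quadratic coefficient $\tfrac12\big(\tfrac{1}{\gamma}-\tfrac{L_f}{2}\big)$, not $\tfrac{1}{\gamma}-\tfrac{L_f}{2}$; your phrase about keeping the residual coefficient ``nonnegative'' glosses over this halving, and in fact the paper's own proof also ends with the halved constant, which does not match the constant displayed in the lemma statement.
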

Direct computations yield
\begin{align}\label{dclema1-t1}
&\Xi(x^k)-\Xi(x^{k+1})=f(x^k)-f(x^{k+1})+g(x^k)-g(x^{k+1})+h(x^{k+1})-h(x^k)\nonumber\\
&\quad\quad\geq \langle \nabla f(x^{k}),x^k-x^{k+1}\rangle-\frac{L_f}{2}\|x^k-x^{k+1}\|_2^2+g(x^k)-g(x^{k+1})+\langle x^{k+1}-x^k,\nabla h(x^k)\rangle.
\end{align}
On the other hand,
with Lemma \ref{tool},  we have
\begin{equation}\label{dclema1-t2}
    \gamma g(x^{k+1})+\frac{\|x^k - \gamma(\nabla f(x^k) -\nabla h(x^k))+e^k-x^{k+1}\|^2}{2}\leq \gamma g(x^k)+\frac{\|- \gamma(\nabla f(x^k) -\nabla h(x^k))+e^k\|^2}{2}.
\end{equation}
Combining (\ref{dclema1-t1}) and (\ref{dclema1-t2}) gives
\begin{eqnarray}\label{rewlema1temp4}
\Xi(x^k)-\Xi(x^{k+1})&\geq&(\frac{1}{\gamma}-\frac{L_f}{2})\|x^k-x^{k+1}\|^2+\frac{\langle e^k,x^k-x^{k+1}\rangle}{\gamma}\nonumber\\
&\geq&\frac{1}{2}(\frac{1}{\gamma}-\frac{L_f}{2})\|x^k-x^{k+1}\|^2-\frac{\|e^k\|_2^2}{\gamma(2-\gamma L_f)},
\end{eqnarray}
where we use the  inequality $\langle e^k,x^k-x^{k+1}\rangle\geq -\frac{1}{2}(1-\frac{\gamma L_f}{2})\|x^k-x^{k+1}\|^2-\frac{\|e^k\|^2}{2-\gamma L_f}$.

\begin{lemma}\label{dclemma2}
Let $(x^{k})_{k\geq 0}$ be generated by scheme (\ref{indc}). Then, there exist $S,D>0$ such that
\begin{equation}
  \emph{dist}(\textbf{0}, \partial\Xi(x^{k+1}))\leq S\|x^{k+1}-x^{k}\|+D\|e^k\|.
\end{equation}
\end{lemma}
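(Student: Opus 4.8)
The plan is to mirror the argument used for the inexact proximal gradient case in Lemma \ref{proxerrorbound}, exploiting the fact that the subdifferential of $\Xi$ splits exactly because both $f$ and $h$ are differentiable, so only $g$ contributes a nonsmooth part. First I would read off the optimality characterization of the proximal step. Since $x^{k+1}\in\textbf{prox}_{\gamma g}\bigl(x^k-\gamma(\nabla f(x^k)-\nabla h(x^k))+e^k\bigr)$, Lemma \ref{tool} gives $\bigl(x^k-\gamma(\nabla f(x^k)-\nabla h(x^k))+e^k\bigr)-x^{k+1}\in\gamma\,\partial g(x^{k+1})$, that is,
\[
\frac{x^k-x^{k+1}}{\gamma}-\bigl(\nabla f(x^k)-\nabla h(x^k)\bigr)+\frac{e^k}{\gamma}\in\partial g(x^{k+1}).
\]

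Next, because $f$ and $h$ are smooth, the limiting subdifferential satisfies $\partial\Xi(x^{k+1})=\nabla f(x^{k+1})-\nabla h(x^{k+1})+\partial g(x^{k+1})$. Adding $\nabla f(x^{k+1})-\nabla h(x^{k+1})$ to both sides of the inclusion above therefore produces an explicit element of $\partial\Xi(x^{k+1})$:
\[
\frac{x^k-x^{k+1}}{\gamma}+\bigl(\nabla f(x^{k+1})-\nabla f(x^k)\bigr)-\bigl(\nabla h(x^{k+1})-\nabla h(x^k)\bigr)+\frac{e^k}{\gamma}\in\partial\Xi(x^{k+1}).
\]

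Finally I would bound $\textrm{dist}(\textbf{0},\partial\Xi(x^{k+1}))$ by the norm of this particular element, and then apply the triangle inequality together with the $L_f$-Lipschitz continuity of $\nabla f$ and the $L_h$-Lipschitz continuity of $\nabla h$, which control the two gradient-difference terms by $L_f\|x^{k+1}-x^k\|$ and $L_h\|x^{k+1}-x^k\|$ respectively. This yields
\[
\textrm{dist}(\textbf{0},\partial\Xi(x^{k+1}))\le\Bigl(\tfrac{1}{\gamma}+L_f+L_h\Bigr)\|x^{k+1}-x^k\|+\tfrac{1}{\gamma}\|e^k\|,
\]
so the claim holds with $S=\tfrac{1}{\gamma}+L_f+L_h$ and $D=\tfrac{1}{\gamma}$. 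There is no real obstacle in this computation; the only point requiring a word of justification is the exactness of the sum rule $\partial\Xi=\nabla f-\nabla h+\partial g$, which is immediate here since the nonsmoothness is entirely carried by $g$ while $f$ and $h$ enter only through their gradients.
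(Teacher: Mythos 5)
Your proposal is correct and follows essentially the same route as the paper's own proof: both read off the inclusion $\frac{x^k-x^{k+1}}{\gamma}-\nabla f(x^k)+\nabla h(x^k)+\frac{e^k}{\gamma}\in\partial g(x^{k+1})$ from the proximal step, shift it into $\partial\Xi(x^{k+1})$ via the exact sum rule (valid since the nonsmoothness sits entirely in $g$), and bound the resulting element using the Lipschitz continuity of $\nabla f$ and $\nabla h$. The constants $S=\frac{1}{\gamma}+L_f+L_h$ and $D=\frac{1}{\gamma}$ you obtain match the paper's exactly.
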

\begin{proof}
With scheme of the algorithm,
\begin{equation}\label{dclemma2-t1}
   \frac{x^k-x^{k+1}}{\gamma}- \nabla f(x^k) +\frac{e^k}{\gamma}+\nabla h(x^k)\in   \partial g(x^{k+1}).
\end{equation}
Thus, we have
\begin{equation}\label{dclemma2-t2}
   \frac{x^k-x^{k+1}}{\gamma}+ \nabla f(x^{k+1})- \nabla f(x^k) +\frac{e^k}{\gamma}+\nabla h(x^k)-\nabla h(x^{k+1})\in  \partial\Xi(x^{k+1}).
\end{equation}
Hence,
\begin{eqnarray}
    \textrm{dist}(\textbf{0},\partial\Xi(x^{k+1}))&\leq&\|\frac{x^k-x^{k+1}}{\gamma}+ \nabla f(x^{k+1})- \nabla f(x^k) +\frac{e^k}{\gamma}+\nabla h(x^k)-\nabla h(x^{k+1})\|\nonumber\\
    &\leq&(\frac{1}{\gamma}+L_f+L_h)\|x^k-x^{k+1}\|+\frac{1}{\gamma}\|e^k\|.
\end{eqnarray}
\end{proof}

\begin{lemma}
Let $(x^{k})_{k\geq 0}$ is generated by scheme (\ref{indc}) and $0<\gamma<\frac{2}{L_f}$, and $\Xi$ be coercive, and $e^k\rightarrow \textbf{0}$. Then, for $x^*$ being the stationary point of $(x^{k})_{k\geq 0}$, there exists a subsequence $(x^{k_j})_{j\geq 0}$ convergent to $x^*$ satisfying $\Xi(x^{k_j})\rightarrow \Xi(x^*)$ and $x^*\in \emph{crit}(\Xi)$.
\end{lemma}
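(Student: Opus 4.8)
The plan is to mirror the continuity-condition arguments already carried out for the inexact proximal gradient scheme (\ref{pga}) and for PALM (\ref{plam}), since (\ref{indc}) is structurally identical: a proximal step on $g$ applied to a perturbed gradient point, with $\omega^k\equiv x^k$. First I would invoke Lemma \ref{dclemma1} together with the coercivity of $\Xi$ to conclude that $(x^k)_{k\geq 0}$ is bounded, so that any critical point $x^*$ of the sequence is the limit of some subsequence $(x^{k_j})_{j\geq 0}$. Exactly as in the continuity argument for (\ref{pga}), Lemmas \ref{points} and \ref{dclemma1} then give the accompanying convergence of the shifted subsequence, $x^{k_j-1}\to x^*$.

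The heart of the argument is to establish $\lim_j g(x^{k_j})=g(x^*)$. For this I would set $u^{k}:=x^k-\gamma(\nabla f(x^k)-\nabla h(x^k))+e^k$, so that $x^{k+1}\in\textbf{prox}_{\gamma g}(u^k)$, and apply the proximal inequality of Lemma \ref{tool} at index $k_j$ with comparison point $y=x^*$:
$$\gamma g(x^{k_j})+\tfrac{1}{2}\|u^{k_j-1}-x^{k_j}\|^2\leq \gamma g(x^*)+\tfrac{1}{2}\|u^{k_j-1}-x^*\|^2.$$
Because $\nabla f$ and $\nabla h$ are Lipschitz, hence continuous, while $x^{k_j-1}\to x^*$ and $e^{k_j-1}\to\mathbf{0}$, the point $u^{k_j-1}$ converges to $u^*:=x^*-\gamma(\nabla f(x^*)-\nabla h(x^*))$, and $x^{k_j}\to x^*$; both quadratic terms then tend to $\tfrac{1}{2}\|u^*-x^*\|^2$ and cancel, leaving $\limsup_j g(x^{k_j})\leq g(x^*)$. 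Combining this with the lower semicontinuity of $g$, which supplies $g(x^*)\leq\liminf_j g(x^{k_j})$, forces $\lim_j g(x^{k_j})=g(x^*)$. Continuity of $f$ and $h$ then gives $f(x^{k_j})\to f(x^*)$ and $h(x^{k_j})\to h(x^*)$, so that $\Xi(x^{k_j})\to\Xi(x^*)$.

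Finally, to certify $x^*\in\textrm{crit}(\Xi)$ I would use Lemma \ref{dclemma2}: the bound $\textrm{dist}(\mathbf{0},\partial\Xi(x^{k+1}))\leq S\|x^{k+1}-x^k\|+D\|e^k\|$, together with $\|x^{k+1}-x^k\|\to 0$ and $e^k\to\mathbf{0}$, gives $\textrm{dist}(\mathbf{0},\partial\Xi(x^{k_j}))\to 0$; the closedness of the graph of $\partial\Xi$, combined with $x^{k_j}\to x^*$ and the already established $\Xi(x^{k_j})\to\Xi(x^*)$, then yields $\mathbf{0}\in\partial\Xi(x^*)$. The one delicate point is the $\limsup$ step: one must ensure that both the perturbation $e^{k_j-1}$ and the gradient evaluations at $x^{k_j-1}$ stabilize in the limit, which is precisely where the hypotheses $e^k\to\mathbf{0}$ and the Lipschitz continuity of $\nabla f,\nabla h$ are consumed; everything else is bookkeeping.
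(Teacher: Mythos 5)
Your proposal is correct and follows essentially the same route as the paper's own proof: boundedness from Lemma \ref{dclemma1} plus coercivity, the shifted subsequence $x^{k_j-1}\to x^*$ via Lemmas \ref{points} and \ref{dclemma1}, the proximal inequality of Lemma \ref{tool} with comparison point $x^*$ to get $\limsup_j g(x^{k_j})\leq g(x^*)$, the lower-semicontinuity sandwich, continuity of $f$ and $h$, and finally Lemma \ref{dclemma2} at $k=k_j-1$ for criticality. Your write-up is in fact slightly more careful than the paper's at the last step, where you make explicit the use of the closedness of $\partial\Xi$ together with $\Xi(x^{k_j})\to\Xi(x^*)$, a detail the paper leaves implicit.
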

\begin{proof}
With Lemma \ref{dclemma1}, $(x^k)_{k\geq 0}$ is bounded. For any $x^*\in\textrm{crit}(\Phi)$, there exists a subsequence $(x^{k_j})_{j\geq 0}$ converges to $x^*$. With Lemmas \ref{points} and \ref{dclemma1}, we also have
\begin{equation}
    x^{k_j-1}\rightarrow x^*.
\end{equation}
And in each iteration of updating $x^{k_j}$, with Lemma \ref{tool}, we have
\begin{eqnarray}
    \gamma g(x^{k_j})&+&\frac{\|x^{k_j-1} - \gamma(\nabla f(x^{k_j-1}) -\nabla h(x^{k_j-1})+e^{k_j-1}-x^{k_j}\|^2}{2}\nonumber\\
    &\leq& \gamma g(x^{*})+\frac{\|x^{k_j}- \gamma(\nabla f(x^{k_j-1}) -\nabla h(x^{k_j-1}))+e^{k_j-1}-x^*\|^2}{2}.
\end{eqnarray}
Taking $j\rightarrow+\infty$, we have
\begin{equation}
    \underset{j\rightarrow+\infty}{\lim\sup}~~g(x^{k_j})\leq g(x^*).
\end{equation}
And recalling the lower semi-continuity of $g$,
\begin{equation}
     g(x^*)\leq \underset{j\rightarrow+\infty}{\lim\inf}~~g(x^{k_j}).
\end{equation}
That means $\lim_j g(x^{k_j})=g(x^*)$; combining the continuity of $f$ and $h$, we then prove $\lim_{j}\Xi(x^{k_j})\rightarrow \Xi(x^*)$. In Lemma \ref{dclemma2}, substituting $k=k_j-1$, we can see $x^*$ is a critical point of $\Xi$.
\end{proof}

\begin{proposition}
Let $(x^{k})_{k\geq 0}$ be generated by scheme (\ref{indc}). Functions $f$, $g$ and $h$ are all semi-algebraic. And the stepsize satisfies $0<\gamma<\frac{2}{L_f}$, and $\Xi$ is coercive, and
$$\|e^k\|=\mathcal{O}(\frac{1}{k^{\alpha}}),\,\alpha>1.$$
Then, the sequence $(x^k)_{k\geq 0}$ has finite length, i.e.
\begin{equation}
\sum_{k=0}^{+\infty}\|x^{k+1}-x^k\|<+\infty.
\end{equation}
And $\{x^k\}_{k=0,1,2,3,\ldots}$ converges to a critical point $x^*$ of $\Xi$.
\end{proposition}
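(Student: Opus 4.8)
The plan is to verify that the inexact DC iteration \eqref{indc} fits the abstract framework \eqref{condition} with $\omega^k\equiv x^k$ and $\tau=0$, and then invoke the master convergence result Theorem \ref{Th-conver}. First I would identify the objective as $F=\Xi=f+g-h$ and the auxiliary sequence as $\eta_k:=\|e^k\|$. Since $f$, $g$ and $h$ are all semi-algebraic, and finite sums and differences of semi-algebraic functions remain semi-algebraic, $\Xi$ is semi-algebraic; it is closed because $g$ is proper lower semicontinuous while $f$ and $h$ are continuous, and coercivity is assumed in the hypothesis. With $\omega^k\equiv x^k$ the point assumption \eqref{pointass} holds trivially, as its left- and right-hand sides coincide.

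Next I would read off the two quantitative conditions from the lemmas already established in this subsection. Lemma \ref{dclemma1} gives, under $0<\gamma<\frac{2}{L_f}$,
\[
\Xi(x^k)-\Xi(x^{k+1})\geq \Bigl(\tfrac{1}{\gamma}-\tfrac{L_f}{2}\Bigr)\|x^k-x^{k+1}\|^2-\tfrac{1}{\gamma(2-\gamma L_f)}\eta_k^2,
\]
which is exactly the pseudo sufficient descent condition (first line of \eqref{condition}) with $a=\frac{1}{\gamma}-\frac{L_f}{2}$ and $b=\frac{1}{\gamma(2-\gamma L_f)}$; both are strictly positive precisely because $0<\gamma<\frac{2}{L_f}$. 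Lemma \ref{dclemma2} supplies the pseudo relative error condition (second line of \eqref{condition}) with $\tau=0$, $c=S$ and $d=D$, since $\eta_k=\|e^k\|$. Finally, the continuity condition (third line of \eqref{condition}) is furnished by the preceding lemma, which produces, for every stationary point $x^*$, a subsequence $(x^{k_j})_{j\geq 0}$ along which $\Xi(x^{k_j})\to\Xi(x^*)$ and confirms $x^*\in\textrm{crit}(\Xi)$.

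With all three conditions of \eqref{condition} together with the assumption \eqref{pointass} verified, it remains only to check the decay hypothesis \eqref{assass} on the auxiliary sequence. By hypothesis $\eta_k=\|e^k\|=\mathcal{O}(\frac{1}{k^{\alpha}})$ with $\alpha>1$, which is exactly the requirement of Theorem \ref{Th-conver}. Applying that theorem yields the finite length $\sum_{k}\|x^{k+1}-x^k\|<+\infty$ and convergence of $(x^k)_{k\geq 0}$ to a critical point of $\Xi$, completing the argument. I do not expect any genuine obstacle here: all of the analytic difficulty---the Lyapunov function $\xi$, the uniformized K{\L} inequality of Lemma \ref{con}, and the summability bookkeeping of Lemma \ref{conver}---is absorbed into Theorem \ref{Th-conver}, so the only care needed is the correct matching of the constants and the verification that $a>0$ and $b>0$, both of which are guaranteed by the stepsize restriction $0<\gamma<\frac{2}{L_f}$.
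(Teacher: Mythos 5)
Your proposal is correct and matches the paper's own (largely implicit) argument: the paper proves Lemmas \ref{dclemma1}, \ref{dclemma2} and the continuity lemma precisely so that conditions (\ref{condition}) and (\ref{pointass}) hold with $\omega^k\equiv x^k$, $\tau=0$, $\eta_k=\|e^k\|$, and then concludes by invoking Theorem \ref{Th-conver}, exactly as in the proof of Proposition \ref{converpga} for the proximal gradient case. Your identification of the constants $a$, $b$, $c$, $d$ and the check that the stepsize restriction $0<\gamma<\frac{2}{L_f}$ makes $a,b>0$ is the same bookkeeping the paper relies on.
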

\subsection{Inexact nonconvex ADMM algorithm}
Alternating Direction Method of Multipliers (ADMM) \cite{gabay1976dual,glowinski1975approximation} is a powerful tool for the minimization of
composite functions with linear constraints.
An inexact nonconvex ADMM scheme is considered for the composite optimization
\begin{equation}
\min_{x,y}\{f(x)+g(y),~\textrm{s.t.}~~x+y=\textbf{0}\}\footnote{The reuslt can be easiy extended to a more general constraint $Ax+By=c$. Here, we consider this case just for the simplicity of presentation. },
\end{equation}
where $g$  is differentiable with $L_g$-Lipschitz gradient.
We consider the following inexact algorithm as
\begin{align}\label{inadmm}
\left\{\begin{array}{c}
         x^{k+1}=\textbf{prox}_{\frac{f}{\alpha+\beta}}(\frac{\alpha}{\alpha+\beta}x^k-\frac{\beta}{\alpha+\beta}y^k-\frac{\gamma^k}{\alpha+\beta}+e_1^{k+1}), \\
          y^{k+1}=\textbf{prox}_{\frac{g}{\beta}}(-x^{k+1}-\frac{\gamma^k}{\beta}+e_2^{k+1}),\\
         \gamma^{k+1}=\gamma^{k}+\beta(x^{k+1}+y^{k+1}),
       \end{array}
\right.
\end{align}
where the augmented Lagrangian function  $L_{\beta}$ is defined as
\begin{align}\label{L1}
L_{\beta}(x, y,\gamma)
=f(x)+g(y)+\langle\gamma, x+ y \rangle
+\frac{\beta}{2} \| x+y\|^{2},
\end{align}
where $\gamma$ is the Lagrangian dual variable and $\alpha>0$ is a parameter.
If $e^k_1\equiv \textbf{0}$ and $e^k_2\equiv \textbf{0}$, the scheme is the standard ADMM . Nonconvex ADMM has been frequently studied in recent years \cite{wang2015global,li2015global,li2016douglas,sun2017pre,sun2017admmdc,ames2016alternating,hong2016convergence}.
 First, we prove a critical lemma.
\begin{lemma}\label{clem}
Let $(x^k,y^k,\gamma^k)_{k\geq 0}$ be generated by (\ref{inadmm}), we then have
\begin{equation}
    \|\gamma^{k+1}-\gamma^k\|^2\leq \rho_1 \|y^{k+1}-y^k\|^2+\rho_2\|e^{k+1}_2-e^{k}_2\|^2,
\end{equation}
where $\rho_1=2L_g^2$, and $\rho_2=2\beta^2$.
\end{lemma}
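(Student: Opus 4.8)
The plan is to eliminate the multiplier differences in favor of the primal variable $y$ by exploiting the optimality condition of the $y$-subproblem together with the dual update. The central observation I would aim for is that $\gamma^{k+1}$ admits a clean closed-form expression in terms of $y^{k+1}$ and the noise alone. To see this, I would first apply Lemma \ref{tool} to the second line of (\ref{inadmm}): since $y^{k+1}\in\textbf{prox}_{g/\beta}(-x^{k+1}-\frac{\gamma^k}{\beta}+e_2^{k+1})$ and $g$ is differentiable, the subgradient inclusion $x-z\in\partial J(z)$ yields
\begin{equation}
\Big(-x^{k+1}-\frac{\gamma^k}{\beta}+e_2^{k+1}\Big)-y^{k+1}=\frac{1}{\beta}\nabla g(y^{k+1}).
\end{equation}

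Next I would multiply through by $\beta$ and regroup, using the dual update $\gamma^{k+1}=\gamma^k+\beta(x^{k+1}+y^{k+1})$, i.e.\ $\beta(x^{k+1}+y^{k+1})=\gamma^{k+1}-\gamma^k$. Substituting $-\beta(x^{k+1}+y^{k+1})=-(\gamma^{k+1}-\gamma^k)$ into the expanded identity collapses the $\gamma^k$ terms and leaves the key relation
\begin{equation}\label{planclem}
\gamma^{k+1}=-\nabla g(y^{k+1})+\beta e_2^{k+1}.
\end{equation}
Because \eqref{planclem} holds for every index, I would immediately write the analogous identity at step $k$ and subtract, obtaining
\begin{equation}
\gamma^{k+1}-\gamma^{k}=-\big(\nabla g(y^{k+1})-\nabla g(y^{k})\big)+\beta\big(e_2^{k+1}-e_2^{k}\big).
\end{equation}

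From here the conclusion is routine: taking norms, invoking the $L_g$-Lipschitz continuity of $\nabla g$ to bound $\|\nabla g(y^{k+1})-\nabla g(y^{k})\|\leq L_g\|y^{k+1}-y^{k}\|$, and then squaring with the elementary inequality $(a+b)^2\leq 2a^2+2b^2$ produces exactly $\rho_1=2L_g^2$ and $\rho_2=2\beta^2$. I expect the only genuinely substantive step to be the derivation of \eqref{planclem}; the cancellation of the $\gamma^k$ terms hinges on matching the dual update with the $y$-optimality condition, and once that telescoping is spotted the remaining estimates are mechanical. A minor care point is confirming that $\partial(g/\beta)(y^{k+1})=\frac{1}{\beta}\nabla g(y^{k+1})$, which is justified since $g$ is assumed differentiable, so the limiting subdifferential reduces to the singleton gradient and Lemma \ref{tool} applies verbatim.
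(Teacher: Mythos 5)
Your proposal is correct and follows essentially the same route as the paper's proof: both derive the identity $\nabla g(y^{k+1})=-\gamma^{k+1}+\beta e_2^{k+1}$ from the $y$-subproblem optimality condition combined with the dual update, subtract the index-shifted version, and conclude via the $L_g$-Lipschitz bound. Your write-up is in fact slightly more complete, since you make explicit the final squaring step $(a+b)^2\leq 2a^2+2b^2$ that the paper leaves implicit between its norm inequality and the squared form stated in the lemma.
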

\begin{proof}
The second step of each iteration gives
 \begin{equation}
 \nabla g(y^{k+1})=-[\gamma^k+\beta (x^{k+1}+y^{k+1})]+\beta e_2^{k+1}.
 \end{equation}
 With the fact $\gamma^{k+1}=\gamma^{k}+\beta(x^{k+1}+ y^{k+1})$, we then have
 \begin{equation}
    \nabla g(y^{k+1})=-\gamma^{k+1}+\beta e_2^{k+1}.
 \end{equation}
 Substituting  $k+1$ with $k$,
  \begin{equation}\label{bneed}
    \nabla g(y^{k})=-\gamma^{k}+\beta e_2^{k}.
 \end{equation}
 Substraction of the two equalities above yields
 \begin{eqnarray}
 \|\gamma^{k+1}-\gamma^k\|\leq L_g\|y^{k+1}-y^{k}\|+\beta\|e^{k+1}_2-e^{k}_2\|.
 \end{eqnarray}
\end{proof}
We define the auxiliary points as
$$d^k:= (x^k,y^k,\gamma^k,y^{k-1}), \omega^k:=(x^k,y^k), \varepsilon^k:=\left(\begin{array}{c}
                                                                 e^{k+1}_2-e^{k}_2 \\
                                                                 e^{k+1}_1 \\
                                                                 e^{k+1}_2
                                                               \end{array}
\right),$$
 and the Lyapunov function as
$$F(d)=F(x,y,\gamma) := L_{\beta}(x, y,\gamma).$$
  In the following, we prove the conditions (\ref{condition}) and \eqref{pointass} hold for $(d^k)_{k\geq 0}$ and $(\omega^k)_{k\geq 0}$  with respect to  $F$.
\begin{lemma}
If $ \sum_{k}\|\omega^k-\omega^{k+1}\|<+\infty$, we have $\sum_{k}\|d^k-d^{k+1}\|<+\infty.$
\end{lemma}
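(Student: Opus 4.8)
The plan is to split $\|d^k-d^{k+1}\|$ into its four coordinate blocks and bound each one separately. Writing out the definition $d^k=(x^k,y^k,\gamma^k,y^{k-1})$ and using the elementary inequality $\sqrt{p^2+q^2+r^2+s^2}\le p+q+r+s$ for nonnegative reals, I would first record
\begin{align*}
\|d^k-d^{k+1}\|\le\|x^{k+1}-x^k\|+\|y^{k+1}-y^k\|+\|\gamma^{k+1}-\gamma^k\|+\|y^k-y^{k-1}\|.
\end{align*}
The three position blocks are handled at once: since $\omega^k=(x^k,y^k)$, we have $\|x^{k+1}-x^k\|\le\|\omega^{k+1}-\omega^k\|$, $\|y^{k+1}-y^k\|\le\|\omega^{k+1}-\omega^k\|$, and $\|y^k-y^{k-1}\|\le\|\omega^k-\omega^{k-1}\|$, so after summation each contributes a finite quantity by the hypothesis $\sum_k\|\omega^{k+1}-\omega^k\|<+\infty$.

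The delicate block is the multiplier increment $\|\gamma^{k+1}-\gamma^k\|$, which is the only coordinate of $d^k$ not already present in $\omega^k$. Here I would invoke Lemma \ref{clem}: taking square roots of its estimate and using $\sqrt{u+v}\le\sqrt{u}+\sqrt{v}$ gives
\begin{align*}
\|\gamma^{k+1}-\gamma^k\|\le \sqrt{2}\,L_g\|y^{k+1}-y^k\|+\sqrt{2}\,\beta\|e_2^{k+1}-e_2^k\|.
\end{align*}
The first term is once more dominated by $\|\omega^{k+1}-\omega^k\|$ and is therefore summable. The second term is governed purely by the noise and cannot be absorbed into the $\omega$-increments; this is the main obstacle. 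To close it I would use the standing noise hypothesis of this subsection, namely $\|e^k\|=\mathcal{O}(1/k^{\alpha})$ with $\alpha>1$, which yields $\sum_k\|e_2^k\|<+\infty$, together with the triangle inequality $\|e_2^{k+1}-e_2^k\|\le\|e_2^{k+1}\|+\|e_2^k\|$ to deduce $\sum_k\|e_2^{k+1}-e_2^k\|<+\infty$.

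Collecting the four estimates and summing over $k$ then produces a bound of the form $\sum_k\|d^k-d^{k+1}\|\le C_1\sum_k\|\omega^{k+1}-\omega^k\|+C_2\sum_k\|e_2^{k+1}-e_2^k\|<+\infty$ for suitable constants $C_1,C_2>0$, which is exactly the assertion and establishes condition \eqref{pointass} for the ADMM iteration. The only genuinely non-routine insight is recognizing that the multiplier difference must borrow its summability from the noise sequence rather than from the $\omega$-increments alone, which is precisely the reason $\gamma^k$ is kept out of $\omega^k$ in the construction.
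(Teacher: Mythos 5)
Your proof is correct, and in substance it is precisely the computation that the paper compresses into the single line ``Direct basic algebraic computation gives the result'': decompose $d^k=(x^k,y^k,\gamma^k,y^{k-1})$ into blocks, absorb the $x$-, $y$- and lagged-$y$ blocks into $\|\omega^{k+1}-\omega^k\|$ (with an index shift for the last one), and control the multiplier block through Lemma \ref{clem}. Where you add genuine value is in making explicit that the $\gamma$-block cannot be handled by algebra alone. Indeed, the derivation of Lemma \ref{clem} rests on the identity obtained from \eqref{bneed}, namely $\gamma^{k+1}-\gamma^k=-\bigl(\nabla g(y^{k+1})-\nabla g(y^k)\bigr)+\beta\bigl(e_2^{k+1}-e_2^k\bigr)$; since this is an equality rather than an upper bound, the noise-difference term is genuinely present in the multiplier increments and cannot be eliminated by sharper estimation or absorbed into the $\omega$-increments. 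Consequently, the implication as literally stated in the lemma --- with no hypothesis on the noise --- is not a purely algebraic consequence of $\sum_k\|\omega^{k+1}-\omega^k\|<+\infty$, and your appeal to the subsection's noise condition $\|e_1^k\|+\|e_2^k\|=\mathcal{O}(1/k^{\alpha})$ with $\alpha>1$ (which is indeed in force in the final proposition, the only place where this lemma is invoked) is a necessity, not a convenience: it is what makes $\sum_k\|e_2^{k+1}-e_2^k\|<+\infty$ and hence closes the $\gamma$-block. In short, your argument is the faithful expansion of the paper's omitted computation, and it additionally exposes a hypothesis that the paper's statement of the lemma silently suppresses; a minor cosmetic remark is that you could cite the displayed inequality $\|\gamma^{k+1}-\gamma^k\|\le L_g\|y^{k+1}-y^k\|+\beta\|e_2^{k+1}-e_2^k\|$ appearing inside the proof of Lemma \ref{clem} directly, rather than taking square roots of its squared form, which only changes your constants by a factor of $\sqrt{2}$.
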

 \begin{proof}
 Direct basic algebraic computation gives the result.
 \end{proof}
\begin{lemma}\label{adlemma1}
Let $(d^{k})_{k\geq 0}$ is generated by scheme (\ref{inadmm}), and $g$ is convex,
\begin{equation}\label{admmcondition}
    \beta>(2\sqrt{2}+1)L_g,\,~ \alpha>0.
\end{equation}
Then, we will have
\begin{equation}
    F(d^k)-F(d^{k+1})\geq \nu\|\omega^{k}-\omega^{k+1}\|^2-\rho\|\varepsilon^{k}\|^2
\end{equation}
for some $\nu,\rho>0$.
\end{lemma}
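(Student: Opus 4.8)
The plan is to exploit the three-block telescoping structure of the augmented Lagrangian $L_{\beta}$ defined in (\ref{L1}). Since $F(d)=L_{\beta}(x,y,\gamma)$ ignores the auxiliary slot $y^{k-1}$, I would write the one-step decrease as a sum of three partial differences obtained by updating one block at a time:
\[
F(d^k)-F(d^{k+1})=\underbrace{[L_{\beta}(x^k,y^k,\gamma^k)-L_{\beta}(x^{k+1},y^k,\gamma^k)]}_{(x\text{-step})}+\underbrace{[L_{\beta}(x^{k+1},y^k,\gamma^k)-L_{\beta}(x^{k+1},y^{k+1},\gamma^k)]}_{(y\text{-step})}+\underbrace{[L_{\beta}(x^{k+1},y^{k+1},\gamma^k)-L_{\beta}(x^{k+1},y^{k+1},\gamma^{k+1})]}_{(\gamma\text{-step})}.
\]
The first two (primal) differences will be bounded below by positive multiples of $\|x^{k+1}-x^k\|^2$ and $\|y^{k+1}-y^k\|^2$ up to noise, while the third (dual) difference is an ascent term that must be absorbed.

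For the $x$-step, observe that $x^{k+1}\in\textbf{prox}_{f/(\alpha+\beta)}(v^k+e_1^{k+1})$ with $v^k=\frac{\alpha x^k-\beta y^k-\gamma^k}{\alpha+\beta}$. Applying Lemma \ref{tool} with comparison point $x^k$, multiplying by $\alpha+\beta$, and expanding the squared norms via $\|a\|^2-\|b\|^2=\|a-b\|^2+2\langle b,a-b\rangle$, the $\langle\gamma^k,\cdot\rangle$ terms cancel against those in the Lagrangian difference and the $\beta$-terms collapse to $-\tfrac{\beta}{2}\|x^{k+1}-x^k\|^2$, leaving
\[
L_{\beta}(x^k,y^k,\gamma^k)-L_{\beta}(x^{k+1},y^k,\gamma^k)\ge \tfrac{\alpha}{2}\|x^{k+1}-x^k\|^2-(\alpha+\beta)\langle e_1^{k+1},x^{k+1}-x^k\rangle.
\]
Here the proximal parameter $\alpha>0$ is exactly what supplies strict descent even though $f$ is nonconvex. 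The $y$-step is handled identically, but now the convexity of $g$ assumed in the statement lets me use the strengthened (strongly convex) prox inequality, producing the extra half that survives the collapse of the $\beta$-terms and yielding $L_{\beta}(x^{k+1},y^k,\gamma^k)-L_{\beta}(x^{k+1},y^{k+1},\gamma^k)\ge \tfrac{\beta}{2}\|y^{k+1}-y^k\|^2-\beta\langle e_2^{k+1},y^{k+1}-y^k\rangle$. Without convexity this residual $y$-quadratic would vanish, so this is where that hypothesis is genuinely used. Both cross terms are then split by Young's inequality into a small fraction of the quadratic plus multiples of $\|e_1^{k+1}\|^2$ and $\|e_2^{k+1}\|^2$.

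The dual step is exact: since $\gamma^{k+1}-\gamma^k=\beta(x^{k+1}+y^{k+1})$, the difference equals $-\tfrac{1}{\beta}\|\gamma^{k+1}-\gamma^k\|^2$. This ascent is the crux and the main obstacle, because a priori nothing in the primal decrease controls $\gamma$. I would resolve it with Lemma \ref{clem}, which gives $\|\gamma^{k+1}-\gamma^k\|^2\le 2L_g^2\|y^{k+1}-y^k\|^2+2\beta^2\|e_2^{k+1}-e_2^k\|^2$, converting the dual ascent into a $-\tfrac{2L_g^2}{\beta}\|y^{k+1}-y^k\|^2$ penalty plus noise. Summing the three estimates, the coefficient of $\|y^{k+1}-y^k\|^2$ becomes (after the Young split) of the form $\tfrac{\beta}{4}-\tfrac{2L_g^2}{\beta}$, which is positive precisely under the size condition (\ref{admmcondition}) on $\beta$, while the coefficient of $\|x^{k+1}-x^k\|^2$ remains $\tfrac{\alpha}{4}>0$. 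Taking $\nu$ to be the minimum of these two coefficients and $\rho$ the largest accumulated noise weight, and recalling $\|\omega^{k+1}-\omega^k\|^2=\|x^{k+1}-x^k\|^2+\|y^{k+1}-y^k\|^2$ together with $\|\varepsilon^k\|^2=\|e_2^{k+1}-e_2^k\|^2+\|e_1^{k+1}\|^2+\|e_2^{k+1}\|^2$, delivers the claimed inequality. The only delicate bookkeeping is choosing the Young weights so that the net $y$-coefficient stays above zero under the stated threshold on $\beta$.
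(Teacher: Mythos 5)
Your proof is correct and follows essentially the same route as the paper's: block-wise telescoping of $L_{\beta}$ over the $x$-, $y$-, and $\gamma$-updates, prox/strong-convexity descent for the two primal steps, Lemma \ref{clem} to convert the exact dual ascent $\frac{1}{\beta}\|\gamma^{k+1}-\gamma^k\|^2$ into a $\|y^{k+1}-y^k\|^2$ penalty plus noise, and Young's inequality to split the noise cross terms. The only cosmetic difference is that you invoke the full $\beta$-strong convexity of the $y$-subproblem (using convexity of $g$), whereas the paper uses the weaker constant $\beta-L_g$; both choices leave the $y$-coefficient positive under (\ref{admmcondition}).
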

\begin{proof}
Notice that $y^{k+1}$ is the minimizer of
$$\underbrace{f(x)+g(y^k)+\langle\gamma^k,x+y^k\rangle+\frac{\beta}{2}\|x+y^k-\frac{\alpha+\beta}{\beta}e^{k+1}_1\|^2}_{:=\hat{L}_{\beta}(x,y^k,\gamma^k)}+\frac{\alpha}{2}\|x^{k+1}-x^k\|^2.$$
Thus, we can have
\begin{equation*}
    \hat{L}_{\beta}(x^{k+1},y^{k},\gamma^k)+\frac{\alpha}{2}\|x^{k+1}-x^k\|^2\leq  \hat{L}_{\beta}(x^{k},y^{k},\gamma^k).
\end{equation*}
Direct computing the yields
\begin{equation}
    L_{\beta}(x^{k+1},y^{k},\gamma^k)+\frac{\alpha}{2}\|x^{k+1}-x^k\|^2\leq L_{\beta}(x^{k+1},y^k,\gamma^k)+(\alpha+\beta)\langle e^{k+1}_1,x^k-x^{k+1}\rangle.
\end{equation}
With the inequality
\begin{equation}
    (\alpha+\beta)\langle e^{k+1}_1,x^k-x^{k+1}\rangle\leq\frac{(\alpha+\beta)^2}{\alpha}\|e_1^{k+1}\|^2+\frac{\alpha}{4}\|x^{k+1}-x^k\|^2,
\end{equation}
we then have
\begin{eqnarray}
    L_{\beta}(x^{k+1},y^{k},\gamma^k)+\frac{\alpha}{4}\|x^{k+1}-x^k\|^2
    \leq L_{\beta}(x^{k},y^k,\gamma^k)+\frac{(\alpha+\beta)^2}{\alpha}\|e_1^{k+1}\|^2.
\end{eqnarray}

Similarly, note that $y^{k+1}$ is the minimizer of
$$\tilde{L}_{\beta}(x^{k+1},y,\gamma^k):=f(x^{k+1})+g(y)+\langle\gamma^k,x^{k+1}+y\rangle+\frac{\beta}{2}\|x^{k+1}+y-e^{k+1}_2\|^2,$$
and $\tilde{L}_{\beta}(x^{k+1},y,\gamma^k)$ is strongly convex with constant $\beta-L_g$.  Thus, we have
\begin{equation*}
    \tilde{L}_{\beta}(x^{k+1},y^{k+1},\gamma^k)+\frac{\beta-L_g}{2}\|y^{k+1}-y^k\|^2\leq  \tilde{L}_{\beta}(x^{k+1},y^{k},\gamma^k).
\end{equation*}
After simplifications, we then derive
\begin{equation}\label{adlemma1-t-1}
    L_{\beta}(x^{k+1},y^{k+1},\gamma^k)+\frac{\beta-L_g}{2}\|y^{k+1}-y^k\|^2\leq L_{\beta}(x^{k+1},y^k,\gamma^k)+\beta\langle e^{k+1}_2,y^k-y^{k+1}\rangle.
\end{equation}
By using the inequality
\begin{equation}
    \beta\langle e^{k+1}_2,y^k-y^{k+1}\rangle\leq\frac{\beta^2}{\beta-L_g}\|e_2^{k+1}\|^2+\frac{\beta-L_g}{4}\|y^{k+1}-y^k\|^2.
\end{equation}
With (\ref{adlemma1-t-1}),
we have
\begin{equation}
    L_{\beta}(x^{k+1},y^{k+1},\gamma^k)+\frac{\beta-L_g}{4}\|y^{k+1}-y^k\|^2\leq L_{\beta}(x^{k+1},y^k,\gamma^k)+\frac{\beta^2}{\beta-L_g}\|e_2^{k+1}\|^2.
\end{equation}

With Lemma \ref{clem},
\begin{eqnarray}
    L_{\beta}(x^{k+1},y^{k+1},\gamma^{k+1})&=&L_{\beta}(x^{k+1},y^{k+1},\gamma^{k})+\frac{\|\gamma^{k+1}-\gamma^k\|^2}{\beta}\nonumber\\
    &\leq&L_{\beta}(x^{k+1},y^{k+1},\gamma^{k})+\frac{\rho_1}{\beta} \|y^{k+1}-y^k\|^2+\frac{\rho_2}{\beta}\|e^{k+1}_2-e^{k}_2\|^2.
\end{eqnarray}
Thus, we have
\begin{eqnarray}
    F(d^{k+1})&+&\frac{\alpha}{4}\|x^{k+1}-x^k\|^2
    +[\frac{\beta-L_g}{4}-\frac{\rho_1}{\beta}]\|y^{k+1}-y^k\|^2\nonumber\\
    &-&\max\{\frac{(\alpha+\beta)^2}{\alpha},\frac{\beta^2}{\beta-L_g},\frac{\rho_2}{\beta}\}\cdot\|\varepsilon^k\|^2\leq F(d^k).
\end{eqnarray}
Letting $\nu:=\min\{\frac{\alpha}{4},\frac{\beta-L_g}{4}-\frac{\rho_1}{\beta}\}$ and $\rho:=\max\{\frac{(\alpha+\beta)^2}{\alpha},\frac{\beta^2}{\beta-L_g},\frac{\rho_2}{\beta}\}$, we then prove the result.
\end{proof}
\begin{lemma}\label{adlemma2}
There exist $S,D>0$ such that
\begin{equation}
  \emph{dist}(\textbf{0}, \partial F(d^{k+1}))\leq S\|\omega^{k+1}-\omega^{k}\|+D\|\varepsilon^k\|.
\end{equation}
\end{lemma}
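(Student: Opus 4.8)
The plan is to produce, block by block, an explicit element of $\partial F(d^{k+1})$ and then bound $\textrm{dist}(\mathbf 0,\partial F(d^{k+1}))$ by its norm. Since the Lyapunov function $F(d)=L_\beta(x,y,\gamma)$ does not depend on the fourth slot of $d=(x,y,\gamma,y^{k-1})$, the block of $\partial F(d^{k+1})$ corresponding to that slot is $\{\mathbf 0\}$ and can be discarded; it remains to handle the $x$-, $y$- and $\gamma$-blocks of $\partial L_\beta$, evaluated at $(x^{k+1},y^{k+1},\gamma^{k+1})$.

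For the $x$-block I would read off the optimality condition of the proximal $x$-update. Applying the relation $x-z\in\partial J(z)$ recorded just after Lemma \ref{tool} to $x^{k+1}=\textbf{prox}_{f/(\alpha+\beta)}(\cdots)$ and clearing the factor $\alpha+\beta$ yields an explicit subgradient $\xi^{k+1}=\alpha(x^k-x^{k+1})-\beta x^{k+1}-\beta y^k-\gamma^k+(\alpha+\beta)e_1^{k+1}\in\partial f(x^{k+1})$. Adding the smooth part $\gamma^{k+1}+\beta(x^{k+1}+y^{k+1})$ of $\partial_x L_\beta$, the bare $\beta x^{k+1}$ terms cancel and the two dual variables combine into the increment $\gamma^{k+1}-\gamma^k$, leaving the element $-\alpha(x^{k+1}-x^k)+\beta(y^{k+1}-y^k)+(\gamma^{k+1}-\gamma^k)+(\alpha+\beta)e_1^{k+1}$. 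For the $y$-block I would reuse the identity $\nabla g(y^{k+1})=-\gamma^{k+1}+\beta e_2^{k+1}$ already derived inside the proof of Lemma \ref{clem}; adding $\gamma^{k+1}+\beta(x^{k+1}+y^{k+1})$ collapses this block to $\beta e_2^{k+1}+(\gamma^{k+1}-\gamma^k)$. The $\gamma$-block is simply $\nabla_\gamma L_\beta=x^{k+1}+y^{k+1}=\frac{1}{\beta}(\gamma^{k+1}-\gamma^k)$.

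The key structural point is that all three blocks are expressed through the dual increment $\gamma^{k+1}-\gamma^k$ together with honest successive differences $x^{k+1}-x^k$, $y^{k+1}-y^k$ and entries of the noise. The increment $\gamma^{k+1}-\gamma^k$ is not part of $\omega^{k+1}-\omega^k=(x^{k+1}-x^k,y^{k+1}-y^k)$, so I would fold it back into the primal increment via Lemma \ref{clem}, giving $\|\gamma^{k+1}-\gamma^k\|\le L_g\|y^{k+1}-y^k\|+\beta\|e_2^{k+1}-e_2^k\|$. This is the only place where genuine ADMM structure, rather than plain proximal bookkeeping, is used: the Lipschitz continuity of $\nabla g$ controls the dual motion by the primal motion. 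Collecting the three blocks with the triangle inequality, bounding $\|x^{k+1}-x^k\|$ and $\|y^{k+1}-y^k\|$ by $\|\omega^{k+1}-\omega^k\|$, and recognizing $e_1^{k+1}$, $e_2^{k+1}$ and $e_2^{k+1}-e_2^k$ as precisely the coordinates of $\varepsilon^k$, one obtains constants $S,D>0$ depending only on $\alpha,\beta,L_g$ for which the asserted inequality holds.

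The step I expect to be most delicate is the cancellation in the $x$-block: one must track the $\alpha+\beta$ scaling inside the prox, the sign of $e_1^{k+1}$, and the combination of the bare dual variables into $\gamma^{k+1}-\gamma^k$, so that no un-differenced iterate survives and the residual is genuinely a combination of successive differences and noise. Once that cancellation is checked, the $y$- and $\gamma$-blocks and the final aggregation are routine consequences of the triangle inequality and Lemma \ref{clem}.
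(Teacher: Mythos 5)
Your proposal follows essentially the same route as the paper's own proof: extract block-wise elements of $\partial F(d^{k+1})$ from the prox optimality conditions of the $x$- and $y$-updates, control the dual increment $\gamma^{k+1}-\gamma^k$ via Lemma \ref{clem}, and aggregate the three blocks with the triangle inequality. Your bookkeeping is in fact slightly more careful than the paper's displayed computation, which drops the $(\gamma^{k+1}-\gamma^k)$ term in its $x$-block bound (harmlessly, since Lemma \ref{clem} absorbs it into the same final form $S\|\omega^{k+1}-\omega^k\|+D\|\varepsilon^k\|$ with adjusted constants), so your argument is correct.
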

\begin{proof}
From Lemma \ref{clem}, we have
\begin{equation}\label{adlemma2-t0}
    \|\gamma^{k+1}-\gamma^k\|\leq \sqrt{\rho_1} \|y^k-y^{k+1}\|+\sqrt{\rho_3}\|e^{k+1}_2-e^{k}_2\|.
\end{equation}
The optimization condition for updating  $x^{k+1}$ is
\begin{equation}\label{adlemma2-t1}
     \alpha(x^k-x^{k+1})-\beta(x^{k+1}+y^k)-\gamma^k+(\alpha+\beta)e^k_1\in\partial f(x^{k+1}).
\end{equation}
With direct calculation, we have
\begin{equation}\label{adlemma2-t2}
    \partial_{x} F(d^{k+1})=\partial f(x^{k+1})+\gamma^{k+1}
+\beta (x^{k+1}+y^{k+1})
\end{equation}
Thus, we have
\begin{eqnarray}\label{adlemma2-t3}
    \textrm{dist}[\textbf{0},\partial_{x} F(d^{k+1})]&\leq& \|\alpha(x^k-x^{k+1})
+\beta (y^{k+1}-y^k)+(\alpha+\beta)e^k_1\|\nonumber\\
&\leq&S_x (\|\omega^{k+1}-\omega^k\|)+D_x(\|e^k_1\|),
\end{eqnarray}
where $S_x=\max\{\alpha,\,~\beta\}$ and $D_x=\alpha+\beta$. While in updating $y^{k+1}$, we have
\begin{equation}\label{adlemma2-t4}
-[\gamma^k+\beta (x^{k+1}+y^k)]+\beta e_2^{k+1}= \nabla g(y^{k+1}).
\end{equation}
And we have
\begin{equation}\label{adlemma2-t5}
    \partial_{y} F(d^{k+1})=\nabla g(y^{k+1})+\gamma^{k+1}
+\beta (x^{k+1}+y^{k+1})
\end{equation}
Combining (\ref{adlemma2-t4}) and (\ref{adlemma2-t5}),
\begin{eqnarray}\label{adlemma2-t6}
        \textrm{dist}[\textbf{0},\partial_{y} F(d^{k+1})]&\leq& \|
(\gamma^{k+1}-\gamma^k)+\beta (y^{k+1}-y^k)+\beta e_2^{k+1}\|\nonumber\\
&\leq&\beta\|y^{k+1}-y^k\|+\sqrt{\rho_1}\|y^k-y^{k+1}\|+\sqrt{\rho_2}\|e^{k+1}_2-e^{k}_2\|+\beta\|e_2^{k+1}\|\nonumber\\
&\leq&S_y (\|\omega^{k+1}-\omega^k\|)+D_y (\|e^{k+1}_2-e^{k}_2\|+\|e_2^{k+1}\|),
\end{eqnarray}
where $S_y=\beta+\sqrt{\rho_1}$ and $D_y=\max\{\sqrt{\rho_2},\beta\}$.
Noting
\begin{equation}
    \partial_{\gamma} F(d^{k+1})=x^{k+1}+y^{k+1}=\frac{\gamma^{k+1}-\gamma^k}{\beta},
\end{equation}
we have
\begin{eqnarray}
    \textrm{dist}(\textbf{0},\partial_{\gamma} F(d^{k+1}))&\leq&\frac{\|\gamma^{k+1}-\gamma^k\|}{\beta}\leq\frac{\sqrt{\rho_1}}{\beta} \|y^k-y^{k+1}\|+\frac{\sqrt{\rho_2}}{\beta}\|e^{k+1}_2-e^{k}_2\|\nonumber\\
    &\leq&S_{\gamma}(\|\omega^{k+1}-\omega^k\|)+D_{\gamma}\|e^{k+1}_2-e^{k}_2\|,
\end{eqnarray}
where $S_{\gamma}=\frac{\sqrt{\rho_1}}{\beta}$ and $D_{\gamma}=\frac{\sqrt{\rho_2}}{\beta}$.
Letting $S=S_{x}+S_{y}+S_{\gamma}$ and $D=D_{x}+D_{y}+D_{\gamma}$, we then prove the result.
\end{proof}

Then, we prove  $\inf F(d^k)>-\infty$. Then, we can obtain the boundedness of the points.
\begin{lemma}\label{adboundness}
If  there exists $\sigma_0>0$ such that
\begin{align}\label{quad}
\inf\{g(y)-\sigma_0\|\nabla g(y)\|^2\}>-\infty.
\end{align}
We also assume that $(e^k_2)_{k\geq 0}$ is bounded and condition (\ref{admmcondition}) holds, and $f(x)$ is coercive. If
$$\beta\geq\frac{1}{\sigma_0},$$
then, the sequence $\{d^k\}_{k=0,1,2,\ldots}$ is bounded.
\end{lemma}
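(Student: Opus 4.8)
The plan is to follow the two-step scheme announced just before the statement: first bound the Lyapunov values $(F(d^k))_{k\ge 0}$ from below, and then read off boundedness of every component of $d^k=(x^k,y^k,\gamma^k,y^{k-1})$ from the coercive structure that the lower bound exposes. The crux is the first step; the second is a matter of unpacking that same estimate with an upper bound in hand.

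For the lower bound I would eliminate the dual variable using the stationarity identity \eqref{bneed}, i.e. $\nabla g(y^k)=-\gamma^k+\beta e_2^k$, so that $\gamma^k=-\nabla g(y^k)+\beta e_2^k$. Completing the square in the definition \eqref{L1} gives
\begin{equation*}
F(d^k)=f(x^k)+g(y^k)+\frac{\beta}{2}\big\|x^k+y^k+\tfrac{\gamma^k}{\beta}\big\|^2-\frac{\|\gamma^k\|^2}{2\beta},
\end{equation*}
and substituting $\gamma^k=-\nabla g(y^k)+\beta e_2^k$ into the last term yields
\begin{equation*}
F(d^k)=f(x^k)+\Big[g(y^k)-\frac{\|\nabla g(y^k)\|^2}{2\beta}\Big]+\langle\nabla g(y^k),e_2^k\rangle-\frac{\beta\|e_2^k\|^2}{2}+\frac{\beta}{2}\big\|x^k+y^k+\tfrac{\gamma^k}{\beta}\big\|^2.
\end{equation*}
I would then split the cross term by Young's inequality, $\langle\nabla g(y^k),e_2^k\rangle\ge-\tfrac{\epsilon}{2}\|\nabla g(y^k)\|^2-\tfrac{1}{2\epsilon}\|e_2^k\|^2$, choosing $\epsilon\le\sigma_0$ so that, together with $\beta\ge 1/\sigma_0$ (which forces $\tfrac{1}{2\beta}\le\tfrac{\sigma_0}{2}$), the total coefficient of $\|\nabla g(y^k)\|^2$ is at most $\sigma_0$. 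The $y^k$-terms then collapse into $g(y^k)-\sigma_0\|\nabla g(y^k)\|^2$, bounded below by assumption \eqref{quad}; the square term is nonnegative; $f$ is bounded below by coercivity; and the remaining $e_2^k$ contributions are bounded since $(e_2^k)_{k\ge 0}$ is bounded. Hence $\inf_k F(d^k)>-\infty$.

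For the second step I would use the descent Lemma \ref{adlemma1}, which (after summation and using summability of the noise as supplied by the surrounding framework) bounds $(F(d^k))_{k\ge 0}$ above by a finite constant $\bar M$. Reading the displayed decomposition with $F(d^k)\le\bar M$ and each summand replaced by its lower bound forces, in turn: $f(x^k)\le\text{const}$, so $(x^k)$ is bounded by coercivity of $f$; and $\big\|x^k+y^k+\gamma^k/\beta\big\|\le\text{const}$, so after substituting $\gamma^k=-\nabla g(y^k)+\beta e_2^k$ and using the bounds on $x^k$ and $e_2^k$, the quantity $\big\|y^k-\tfrac1\beta\nabla g(y^k)\big\|$ is bounded. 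Since \eqref{admmcondition} gives $\beta>L_g$, the map $y\mapsto y-\tfrac1\beta\nabla g(y)=\nabla\big(\tfrac12\|y\|^2-\tfrac1\beta g(y)\big)$ is the gradient of a strongly convex function and is therefore coercive, so $(y^k)$ is bounded; then $\gamma^k=-\nabla g(y^k)+\beta e_2^k$ is bounded as well. Together with the lagged copy $y^{k-1}$, this gives boundedness of $(d^k)_{k\ge 0}$.

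The main obstacle is the constant bookkeeping in the first step: the square-completion and the Young splitting must be arranged so that the coefficient of $\|\nabla g(y^k)\|^2$ lands at or below $\sigma_0$, which is exactly the role of the hypothesis $\beta\ge 1/\sigma_0$; without this matching, the $\|\nabla g(y^k)\|^2$ term cannot be absorbed by \eqref{quad} and the lower bound breaks. A secondary subtlety is that coercivity of $f$ only controls $(x^k)$, whereas boundedness of $(y^k)$ is not automatic and must be extracted by turning the bound on $\big\|y^k-\tfrac1\beta\nabla g(y^k)\big\|$ into a bound on $\|y^k\|$ via the strong convexity granted by $\beta>L_g$ in \eqref{admmcondition}.
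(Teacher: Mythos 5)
Your proof is correct, and its first half is essentially the paper's: both complete the square in $L_{\beta}$ to write $F(d^k)=f(x^k)+g(y^k)-\frac{\|\gamma^k\|^2}{2\beta}+\frac{\beta}{2}\|x^k+y^k+\gamma^k/\beta\|^2$, use the identity \eqref{bneed} to trade $\gamma^k$ for $\nabla g(y^k)$ up to noise, and absorb the resulting $\|\nabla g(y^k)\|^2$ term into assumption \eqref{quad} with the help of $\beta\ge 1/\sigma_0$. Where you genuinely diverge is in how boundedness is extracted. The paper does not substitute $\gamma^k$ away: it splits $-\frac{1}{2\beta}\|\gamma^k\|^2=-\frac{\sigma_0}{2}\|\gamma^k\|^2+(\frac{\sigma_0}{2}-\frac{1}{2\beta})\|\gamma^k\|^2$, bounds the first piece using $\|\gamma^k\|^2\le 2\|\nabla g(y^k)\|^2+2\beta^2\|e_2^k\|^2$ (its inequality \eqref{bneed2}), and keeps the nonnegative second piece, from which it reads off boundedness of $\gamma^k$ directly and then of $y^k$ via the bounded term $x^k+y^k+\gamma^k/\beta$. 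Note that this yields $\gamma^k$ bounded only when $\frac{\sigma_0}{2}-\frac{1}{2\beta}>0$, i.e. $\beta>1/\sigma_0$ strictly, whereas the statement allows $\beta=1/\sigma_0$; your route---eliminating $\gamma^k$ entirely via Young's inequality and recovering $y^k$ from the strong monotonicity of $y\mapsto y-\frac{1}{\beta}\nabla g(y)$ (valid since \eqref{admmcondition} gives $\beta>L_g$), then $\gamma^k$ from \eqref{bneed}---covers that borderline case, at the cost of one extra ingredient (coercivity of the gradient of a strongly convex function) that the paper does not need. Both arguments share the same unstated dependence on an upper bound for $F(d^k)$, which comes from Lemma \ref{adlemma1} together with square-summability of the noise supplied by the surrounding proposition; you flag this explicitly, while the paper leaves it implicit (its proof also invokes $\lim_k\|e_2^k\|=0$, which is not among the lemma's stated hypotheses---boundedness, as you use, suffices).
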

\begin{proof}
From (\ref{bneed}),
 \begin{equation}\label{bneed2}
    \|\gamma^{k}\|^2\leq2\|\nabla g(y^{k})\|^2+2\beta^2\|e_2^{k}\|^2.
 \end{equation}
We have
\begin{align}
F(d^k)&=f(x^k)+g(y^k)+\langle \gamma^k,x^k+y^k\rangle+\frac{\beta}{2}\|x^k+y^k\|^2\nonumber\\
&=f(x^k)+g(y^k)-\frac{\|\gamma^k\|^2}{2\beta}+\frac{\beta}{2}\|x^k+y^k+\frac{\gamma^k}{\beta}\|^2\nonumber\\
&=f(x^k)+g(y^k)-\frac{\sigma_0}{2}\|\gamma^k\|^2+(\frac{\sigma_0}{2}-\frac{1}{2\beta})\|\gamma^k\|^2+\frac{\beta}{2}\|x^k+y^k+\frac{\gamma^k}{\beta}\|^2\nonumber\\
(\ref{bneed2})&\geq f(x^k)+g(y^k)-\sigma_0\|\nabla g(y^k)\|^2\nonumber\\
&+(\frac{\sigma_0}{2}-\frac{1}{2\beta})\|\gamma^k\|^2+\frac{\beta}{2}\|x^k+y^k+\frac{\gamma^k}{\beta}\|^2-\sigma_0\beta^2\|e_2^k\|^2.
\end{align}
Noting $\lim_k\|e_2^k\|^2=0$, we then can see $\{f(x^k)\}_{k=0,1,2,\ldots}$, $\{\gamma^k\}_{k=0,1,2,\ldots}$, $\{x^k+y^k+\frac{\gamma^k}{\beta}\}_{k=0,1,2,\ldots}$ are all bounded. Then, $\{d^k\}_{k=0,1,2,\ldots}$ is bounded.
\end{proof}
\begin{remark}
Combining (\ref{admmcondition}), we need to set
\begin{equation}
    \beta> \min\{(2\sqrt{2}+1)L_g,\,~\frac{1}{\sigma_0}\},\,~  \alpha>0.
\end{equation}
\end{remark}

\begin{remark}
The condition (\ref{quad}) holds for many quadratical functions \cite{li2015global,sun2017pre}. This condition also implies the function $g$ is similar to quadratical function and its property is ``good".
\end{remark}

\begin{lemma}
Let $(d^{k})_{k\geq 0}$ is generated by scheme (\ref{inadmm}), and $e^k_1\rightarrow \textbf{0}$, and $e^k_2\rightarrow \textbf{0}$. And let  conditions of Lemmas \ref{adlemma1} and \ref{adboundness} hold. Then, for any stationary point $d^*$ of $(d^k)_{\geq 0}$, there exists a subsequence $(d^{k_j})_{j\geq 0}$ converges to $d^*$ satisfying $F(d^{k_j})\rightarrow F(d^*)$ and $d^*\in \emph{crit}(F)$.
\end{lemma}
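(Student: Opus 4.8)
The plan is to verify the continuity condition for the ADMM iterates along the same lines as Subsections 4.1--4.4, the one genuinely new ingredient being the control of the nonsmooth block $f$ through the proximal inequality of Lemma \ref{tool}. First I would collect the preliminaries. Lemma \ref{adboundness} makes $(d^k)_{k\geq 0}$ bounded, so cluster points exist; fix any such $d^*=(x^*,y^*,\gamma^*,\tilde y^*)$ and a subsequence $d^{k_j}\to d^*$. Lemmas \ref{points} and \ref{adlemma1} give $\|\omega^{k+1}-\omega^k\|\to 0$, and Lemma \ref{clem} transfers this to $\|\gamma^{k+1}-\gamma^k\|\to 0$, so $\|d^{k+1}-d^k\|\to 0$. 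Hence the shifted subsequence converges as well, $d^{k_j-1}\to d^*$, and the fourth block $y^{k_j-1}\to y^*$ forces $\tilde y^*=y^*$, so that $F(d^*)=L_\beta(x^*,y^*,\gamma^*)$.

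The core step is $f(x^{k_j})\to f(x^*)$. Writing the input of the $x$-update as $u^{k}:=\frac{\alpha}{\alpha+\beta}x^k-\frac{\beta}{\alpha+\beta}y^k-\frac{\gamma^k}{\alpha+\beta}+e_1^{k+1}$, we have $x^{k+1}\in\textbf{prox}_{f/(\alpha+\beta)}(u^k)$, so Lemma \ref{tool} with the competitor $x^*$ gives, for $k=k_j-1$,
\begin{equation*}
\frac{f(x^{k_j})}{\alpha+\beta}+\frac{\|x^{k_j}-u^{k_j-1}\|^2}{2}\leq \frac{f(x^*)}{\alpha+\beta}+\frac{\|x^*-u^{k_j-1}\|^2}{2}.
\end{equation*}
Since $e_1^{k_j}\to\textbf{0}$ and $d^{k_j-1}\to d^*$, we have $u^{k_j-1}\to u^*:=\frac{\alpha}{\alpha+\beta}x^*-\frac{\beta}{\alpha+\beta}y^*-\frac{\gamma^*}{\alpha+\beta}$ while $x^{k_j}\to x^*$, so both squared norms tend to $\|x^*-u^*\|^2$; taking the limit superior yields $\limsup_j f(x^{k_j})\leq f(x^*)$. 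The lower semicontinuity of $f$ supplies $f(x^*)\leq\liminf_j f(x^{k_j})$, whence $\lim_j f(x^{k_j})=f(x^*)$.

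With this the rest is continuity and closedness. Since $g$ is differentiable, $g(y^{k_j})\to g(y^*)$, and the coupling term $\langle\gamma,x+y\rangle+\frac{\beta}{2}\|x+y\|^2$ is continuous, so $F(d^{k_j})=L_\beta(x^{k_j},y^{k_j},\gamma^{k_j})\to L_\beta(x^*,y^*,\gamma^*)=F(d^*)$. For $d^*\in\textrm{crit}(F)$ I would use Lemma \ref{adlemma2}, which with $k=k_j-1$ gives $\textrm{dist}(\textbf{0},\partial F(d^{k_j}))\leq S\|\omega^{k_j}-\omega^{k_j-1}\|+D\|\varepsilon^{k_j-1}\|\to 0$, the right-hand side vanishing because the successive differences tend to zero and $e_1^k,e_2^k\to\textbf{0}$ force $\varepsilon^k\to\textbf{0}$. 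Choosing $v^{k_j}\in\partial F(d^{k_j})$ with $v^{k_j}\to\textbf{0}$, the pairs $(d^{k_j},v^{k_j})\in\textrm{graph}(\partial F)$ satisfy $d^{k_j}\to d^*$, $v^{k_j}\to\textbf{0}$ and $F(d^{k_j})\to F(d^*)$, so the closedness of $\textrm{graph}(\partial F)$ from Section 2 gives $\textbf{0}\in\partial F(d^*)$.

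The main obstacle is the one-sided estimate $\limsup_j f(x^{k_j})\leq f(x^*)$: because $f$ is only proper and lower semicontinuous, its values along the iterates cannot be handled by continuity, and it is precisely the minimizing characterization of the proximal $x$-step, tested against the true limit $x^*$, that produces this bound. The remaining pieces are bookkeeping resting on the earlier lemmas; one need only be careful to establish the function-value convergence before invoking the closedness of the subdifferential, as the latter takes $F(d^{k_j})\to F(d^*)$ as a hypothesis.
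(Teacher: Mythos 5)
Your proposal is correct and follows essentially the same route as the paper: boundedness from Lemma \ref{adboundness}, the proximal inequality of Lemma \ref{tool} tested against the competitor $x^*$ combined with lower semicontinuity to obtain $f(x^{k_j})\to f(x^*)$, continuity of $g$ and of the coupling terms for $F(d^{k_j})\to F(d^*)$, and Lemma \ref{adlemma2} together with closedness of $\partial F$ for $d^*\in\mathrm{crit}(F)$. The only divergence is one detail: to get $\gamma^{k_j-1}\to\gamma^*$ you use Lemma \ref{clem} and the vanishing successive differences, whereas the paper infers $x^*+y^*=\textbf{0}$ from $\textbf{0}\in\partial F(d^*)$; your version is in fact cleaner, since the paper's step presupposes the criticality that is still to be established.
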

\begin{proof}
Obviously, we have $e^k_1\rightarrow \textbf{0}$.
With Lemma \ref{adboundness}, $(d^k)_{k\geq 0}$ is bounded; so are $(x^k)_{k\geq 0}$  and $(y^k)_{k\geq 0}$.  For any stationary point $d^*=(x^*,y^*,\gamma^*,y^*)$, there exists a subsequence $(d^{k_j})_{j\geq 0}$ converges to $d^*$. With Lemmas \ref{points} and \ref{adlemma1}, we also have
\begin{equation}
    x^{k_j-1}\rightarrow x^*, y^{k_j-1}\rightarrow y^*.
\end{equation}
Noting $\textbf{0}\in \partial F(d^*)$, $x^*+y^*=\textbf{0}$; thus, $\gamma^{k_j-1}\rightarrow \gamma^*$.
And in each iteration of updating $x^{k_j}$, with Lemma \ref{tool}, we have
\begin{eqnarray}
    \frac{1}{\alpha+\beta} f(x^{k_j})&+&\frac{\|\frac{\alpha}{\alpha+\beta}x^{k_j-1}-\frac{\beta}{\alpha+\beta}y^{k_j-1}-\frac{\gamma^{k_j-1}}{\alpha+\beta}+e_1^{k_j}-x^{k_j}\|^2}{2}\nonumber\\
    &\leq& \frac{1}{\alpha+\beta} f(x^{*})+\frac{\|\frac{\alpha}{\alpha+\beta}x^{k_j-1}-\frac{\beta}{\alpha+\beta}y^{k_j-1}-\frac{\gamma^{k_j-1}}{\alpha+\beta}+e_1^{k_j}-x^*\|^2}{2}.
\end{eqnarray}
Taking $j\rightarrow+\infty$, we have
\begin{equation}
    \underset{j\rightarrow+\infty}{\lim\sup}~~f(x^{k_j})\leq f(x^*).
\end{equation}
And recalling the lower semi-continuity of $f$,
\begin{equation}
     f(x^*)\leq \underset{j\rightarrow+\infty}{\lim\inf}~~f(x^{k_j}).
\end{equation}
That means $\lim_j f(x^{k_j})=f(x^*)$; combining the continuity of $g$, we then prove the continuity condition. On the other hand, Lemma \ref{adlemma2} can guarantee that  $d^*\in \textrm{crit}(F)$.
\end{proof}
Finally, we present the convergence result for the inexact ADMM (\ref{inadmm}).
\begin{proposition}
Let $(x^{k})_{k\geq 0}$ be generated by scheme (\ref{inadmm}) and   conditions of Lemmas \ref{adlemma1} and \ref{adboundness} hold. Assume that $f$ and $g$ are both semi-algebraic.
If
$$\|e^k_1\|+\|e^k_2\|=\mathcal{O}(\frac{1}{k^{\alpha}}),\,\alpha>1,$$
then, the sequence $(x^k,y^k)_{k\geq 0}$ has finite length, i.e.
\begin{equation}
\sum_{k=0}^{+\infty}(\|x^{k+1}-x^k\|+\|y^{k+1}-y^k\|)<+\infty.
\end{equation}
\end{proposition}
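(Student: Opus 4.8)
The plan is to recognize this proposition as a direct instance of the general convergence result, Theorem~\ref{Th-conver}, applied to the Lyapunov function $F(d)=L_{\beta}(x,y,\gamma)$ with the composite iterate $d^k=(x^k,y^k,\gamma^k,y^{k-1})$, the auxiliary sequence $\omega^k=(x^k,y^k)$, the noise sequence $\eta_k=\|\varepsilon^k\|$, and delay parameter $\tau=0$. Under this identification every ingredient of the abstract framework~\eqref{condition} has already been assembled in the preceding lemmas of this subsection, so the bulk of the work is bookkeeping: matching constants and invoking the theorem.

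First I would verify the three conditions of~\eqref{condition}. The pseudo sufficient descent condition is exactly Lemma~\ref{adlemma1}, with $a=\nu$ and $b=\rho$. The pseudo relative error condition is Lemma~\ref{adlemma2}, with $c=S$, $d=D$ and $\tau=0$. The continuity condition is the content of the lemma immediately preceding this proposition, which produces, for each limit point $d^*$, a subsequence along which $F(d^{k_j})\to F(d^*)$. Condition~\eqref{pointass}, namely $\sum_k\|\omega^k-\omega^{k+1}\|<+\infty\Rightarrow\sum_k\|d^k-d^{k+1}\|<+\infty$, is the earlier lemma of this subsection: the extra coordinates $y^{k-1}$ and $\gamma^k$ of $d^k$ are controlled by the $\omega$-differences, the latter through Lemma~\ref{clem} together with the summability of $(\|e_2^{k+1}-e_2^k\|)_{k\geq 0}$.

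Next I would check the noise decay hypothesis~\eqref{assass}. By the triangle inequality $\eta_k=\|\varepsilon^k\|\le\|e_2^{k+1}-e_2^k\|+\|e_1^{k+1}\|+\|e_2^{k+1}\|\le\|e_1^{k+1}\|+\|e_2^{k+1}\|+\|e_2^k\|$, and each summand is $\mathcal{O}(1/k^{\alpha})$ by the hypothesis $\|e_1^k\|+\|e_2^k\|=\mathcal{O}(1/k^{\alpha})$, whence $\eta_k=\mathcal{O}(1/k^{\alpha})$ with $\alpha>1$. I would also record that $F=L_{\beta}$ is closed and semi-algebraic: it is a finite sum of the semi-algebraic functions $f$ and $g$ with the polynomial terms $\langle\gamma,x+y\rangle$ and $\frac{\beta}{2}\|x+y\|^2$, and it is independent of the redundant coordinate $y^{k-1}$, which does not affect semi-algebraicity.

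The one genuine obstacle is that $L_{\beta}$ is \emph{not} coercive, whereas Theorem~\ref{Th-conver} assumes coercivity of $F$. Coercivity, however, enters the proof only to secure boundedness of the iterate sequence, through Lemma~\ref{descend}(1) and Lemma~\ref{points}; here that boundedness is supplied directly by Lemma~\ref{adboundness} under the hypotheses~\eqref{quad}, $\beta\ge 1/\sigma_0$, and coercivity of $f$. I would therefore rerun the arguments of Lemma~\ref{conver} and Theorem~\ref{Th-conver} with the phrase ``coercivity of $F$'' replaced by ``boundedness of $(d^k)$ from Lemma~\ref{adboundness}'', which leaves the construction of $t_k$, the K{\L} descent, and the telescoping estimate untouched. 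Applying the theorem then yields $\sum_k\|d^{k+1}-d^k\|<+\infty$, and since $(x^k,y^k)=\omega^k$ is a subvector of $d^k$, this immediately gives $\sum_k(\|x^{k+1}-x^k\|+\|y^{k+1}-y^k\|)<+\infty$, completing the proof.
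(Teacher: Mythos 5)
Your proposal is correct and follows essentially the same route as the paper's proof, which likewise verifies that $\|\varepsilon^k\|=\mathcal{O}(1/k^{\alpha})$ (via $\|\varepsilon^k\|\leq 2\|e_1^k\|+2\|e_2^k\|$, up to index shifts) and then invokes the general convergence result together with the lemmas of this subsection (Lemmas \ref{clem}, \ref{adlemma1}, \ref{adlemma2}, \ref{adboundness}, the $\omega$-to-$d$ summability lemma, and the continuity lemma). If anything, you are more careful than the paper: its proof is a three-line appeal to ``the lemmas proved in this part'' and never acknowledges that $L_{\beta}$ is not coercive, whereas your observation that coercivity enters Theorem \ref{Th-conver} only through boundedness of the iterates, which Lemma \ref{adboundness} supplies directly, makes explicit the repair on which the paper's terse argument implicitly relies.
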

\begin{proof}
Noting
\begin{eqnarray}
   \|\varepsilon^k\|\leq 2\|e^k_1\|+2\|e^k_2\|,
\end{eqnarray}
that is also
\begin{eqnarray}
   \|\varepsilon^k\|=\mathcal{O}(\frac{1}{k^{\alpha}}),\,\alpha>1.
\end{eqnarray}
With the lemmas proved in this part and Theorem \ref{conver}, we then prove the result.
\end{proof}
\section{Conclusion}
In this paper, we build a framework to prove the sequence convergence for  inexact nonconvex and nonsmooth algorithms. The sequence generated by the algorithm is proved to  converge to a critical point of the objective function under finite energy assumption on the noise and the K{\L} property assumption. We apply our theoretical results to several  specific  algorithms and obtain the specific convergence results.
\section*{Acknowledgments}
The authors are thankful to the editors and anonymous referees for their useful suggestions. We are grateful for the support from the National Key Research and Development Program of China (2017YFB0202003), and  National Science Foundation of China 
(No. 61603322)  and (No.61571008), and National Natural
Science Foundation of Hunan (2018JJ3616).

\end{document}